\documentclass[reqno,oneside,12pt]{amsart}

%%%%%%%%%%%%%%%%%%%%%%%%%%%%%%%%%%%%%%%%%%%%%%%%%%%%%%%%%%%%%%%%%%
%  Fichier tex pour l'article :
%
% classification ams :
%
% Keywords :
%
%
%%%%%%%%%%%%%%%%%%%%%%%%%%%%%%%%%%%%%%%%%%%%%%%%%%%%%%%%%%%%%%%%%%
%
\usepackage[T1]{fontenc}
\usepackage{times,dsfont}
\usepackage{amssymb,epsfig,verbatim}%,xypic}

\usepackage{mathtools}
\usepackage{color}
%\usepackage{showlabels}
%
%
%%%%%%%%%%%%%%%%%%%%%%%%%%%%%%%%%%%%%%%%%%%%%%%%%%%%%%%%%%%%%%%%%%
%
\theoremstyle{plain}

\newtheorem{thm}{Theorem}[section]

\newtheorem{pro}[thm]{Proposition}
\newtheorem{lem}[thm]{Lemma}
\newtheorem{proposition-principale}[thm]{Proposition principale}
\newtheorem{thm-principal}{Main Theorem}
\newtheorem{defi}[thm]{Definition}

\theoremstyle{definition}
\newtheorem{que}[thm]{Question}
\newtheorem{eg}[thm]{Example}
\newtheorem{rem}[thm]{Remark}

\newenvironment{defi-G}
{\noindent{\bf Definition.}\it }{}

\newenvironment{thm-M}
{\noindent{\bf Main Theorem.}\it }{}

\newenvironment{thm-C}
{\noindent{\bf Theorem C.}\it }{\\ }

\newenvironment{thm-A}
{\noindent{\bf Theorem A.}\it}{\\ }

\newenvironment{thm-B}
{\noindent{\bf Theorem B.}\it}{\\ }

\newenvironment{thm-BB}
{\noindent{\bf Theorem B'.}\it}

%%%%%%%%%%%%%%%%%%%%%%%%%%%%%%%%%%%%%%%%%%%%%

%%%%%%%%%%%%%%%%%%%%%%%%%%%%%%%%%%
%   Lettres
%%%%%%%%%%%%%%%%%%%%%%%%%%%%%
\def\C{\mathbf{C}}
\def\R{\mathbf{R}}
\def\Q{\mathbf{Q}}

\def\Z{\mathbf{Z}}
\def\N{\mathbf{N}}
\def\bfk{\mathbf{k}}
\def\bfK{\mathbf{K}}

\def\Qbar{{\overline{\mathbf{Q}}}}

%%%%%%%%%%%%%%%%%%%%%%%%%%%%%%%%%%%%%%%%%%

\def\bfx{\mathbf{x}}

\def\Norm{{\mathrm{Norm}}}

\def\HH{{\mathbb{H}}}
\def\bfe{{\mathbf{e}}}
\def\bfh{{\mathbf{h}}}
\def\bfc{{\mathbf{c}}}
\def\bfv{{\mathbf{v}}}

\def\bp{{\mathrm{bp}}}

\def\Ima{{\mathrm{Im}}}
\def\Num{{\mathrm{Num}}}

\def\P{\mathbb{P}}
\def\A{\mathbb{A}}
\def\F{{\mathbb{F}}}

\def\sfO{{\sf{O}}}

\def\ZZ{\mathcal{Z}}
\def\ZC{\mathsf{Z}}

%%%%%%%%%%%%%%%%%%%%%%%%%%%%%%%%%%%%%%%%%%

%%%%%%%%%%%%%%%%%%%%%%%%%%%%%%%%%%%%%%%%%%%%%%%%%%%%%%%%%%%%%%%%%
%   Abbrev.
%%%%%%%%%%%%%%%%%%%%%%%%%%%%%%%%%%%%%%%%%%%%%%%%%%%%%%%%%%%%%%%%%

\def\stable{{\mathrm{sl}}}

\def\Aut{{\sf{Aut}}}

\def\Bir{{\sf{Bir}}}

\def\Isom{{\sf{Iso}}}

\def\PGL{{\sf{PGL}}\,}
\def\PSL{{\sf{PSL}}\,}
\def\GL{{\sf{GL}}\,}

\def\Iso{{\sf{Iso}}\,}

\def\Gm{{\mathbb{G}}_{\sf{m}}}
 
\def\SL{{\sf{SL}}\,}

\newcommand{\Id}{{\rm Id}}

\def\Pic{{\mathrm{Pic}}}
\def\Cr{{\mathrm{Cr}}}

\def\dist{{\sf{dist}}}

%
%%%%%%%%%%%%%%%%%%%%%%%%%%%%%%%%%%%%%%%%%%%%%%%%%%%%%%%%%%%%%%%%%%
%

\setlength{\textwidth}{14.0cm}                       %{13.1cm}
\setlength{\textheight}{20.5cm}                     %{20.0cm}
\setlength{\topmargin}{0.20cm}                     %{0.4cm}
\setlength{\headheight}{0.8cm}                     %{0.6cm}
\setlength{\headsep}{0.6cm}                         %{0.8cm}
\setlength{\oddsidemargin}{1.5cm}                %{1.2cm}  
\setlength{\evensidemargin}{1.5cm}              %{1.2cm}

%%%%%%%%%%%%%%%%%%%%%%%%%%%%%%%%%%%%%%%%%%%%%%%%%%%%%%%%%%%%%%%%%%
%%%%%%%%%%%%%%%%%%%%%%%%%%%%%%%%%%%%%%%%%%%%%%%%%%%%%%%%%%%%%%%%%%
%
\addtocounter{section}{0}             % Start with section 1
\numberwithin{equation}{section}       % Number formulas within sections
%%%%%%%%%%%%%%%%%%%%%%

\begin{document}

\setlength{\baselineskip}{0.56cm}        % Previous 0.56
%
%%%%%%%%%%%%%%%%%%%%%%%%%%%%%%%%%%%%%%%%%%%%%%%%%%%%%%%%%%%%%%%%%%
%

\title{Distortion in Cremona groups}
\date{February 29, 2020}
\author{Serge Cantat and Yves de Cornulier}
\address{CNRS and Univ Rennes, IRMAR - UMR 6625, F-35000 Rennes}
\email{serge.cantat@univ-rennes1.fr}

\address{
CNRS and Univ Lyon, Univ Claude Bernard Lyon 1, Institut Camille Jordan, 43 blvd. du 11 novembre 1918, F-69622 Villeurbanne}
\email{cornulier@math.univ-lyon1.fr}
\thanks{S.C.\ was supported by the French Academy of Sciences (Fondation del Duca). Y.C.\ was supported by ANR Gamme (ANR-14-CE25-0004). }

\subjclass[2010]{Primary 14E07, Secondary 14J50, 20F65}
%	14E07  	Birational automorphisms, Cremona group and generalizations
%	14J50  	Automorphisms of surfaces and higher-dimensional varieties
%	20F65  	Geometric group theory

%
%%%%%%%%%%%%%%%%%%%%%%%%%%%%%%%%%%%%%%%%%%%%%%%%%%%%%%%%%%%%%%%%%%
%

%
%%%%%%%%%%%%%%%%%%%%%%%%%%%%%%%%%%%%%%%%%%%%%%%%%%%%%%%%%%%%%%%%%%
%

%
%%%%%%%%%%%%%%%%%%%%%%%%%%%%%%%%%%%%%%%%%%%%%%%%%%%%%%%%%%%%%%%%%%
%

\begin{abstract} 
We study the distortion of elements in two-dimensional Cremona groups over algebraically closed fields of characteristic zero. We obtain the following trichotomy: non-elliptic elements (i.e., those whose powers have unbounded degree) are undistorted, and elliptic elements have a double exponential distortion when they are virtually unipotent or an exponential distortion otherwise.
\end{abstract}

\maketitle

\setcounter{tocdepth}{1}
%\tableofcontents

%%%%%%%%%%%%%%%%%%%%%%%%%%%%%%%%%%%%%%%%%%%%%
%%%%%%%%%%%%%%%%%%%%%%%%%%%%%%%%%%%%%%%%%%%%%
\section{Introduction}
%%%%%%%%%%%%%%%%%%%%%%%%%%%%%%%%%%%%%%%%%%%%%
%%%%%%%%%%%%%%%%%%%%%%%%%%%%%%%%%%%%%%%%%%%%%

Let $\bfk$ be an algebraically closed field. 
The goal of this paper is to study the distortion in the Cremona group $\Bir(\P^2_\bfk)$. 
We characterize distorted elements, and study their distortion function. The three 
main tools are: (1) an upper bound on the distortion which is obtained via height estimates, using 
basic number theory (this holds in arbitrary dimension); (2) a result of Blanc and D\'eserti concerning 
base points of birational transformations of the plane;  (3) a non-distortion result for parabolic elements  in $\Bir(\P^2_\bfk)$, 
obtained via Noether inequalities and the study of the action of $\Bir(\P^2_\bfk)$ on the Picard-Manin space (an infinite-dimensional
hyperbolic space). This third step sheds new light on the geometry of the action of $\Bir(\P^2_\bfk)$ on this hyperbolic space. 

%%%%%%%%%%%%%%%%%%%%%%%%%%%%%%%
\subsection{Distortion}
%%%%%%%%%%%%%%%%%%%%%%%%%%%%%%%

If $f$ and $g$ are two real-valued functions on $\R_+$, we write $f\preceq g$ if there exist three positive constants $C$, $C'$, $C''$ 
such that $f(x)\le Cg(C'x)+C''$ for all $x\in \R_+$. We write $f\simeq g$ when $f\preceq g\preceq f$.

\begin{defi}Let $G$ be a group. If $S$ and $T$ are two subsets of $G$ containing the neutral element~$1$, we 
write $S\preceq T$ if $S\subset T^k$ for some integer $k\geq 0$, and $S\simeq T$ if $S\preceq T\preceq S$.
Let $c$ be an element of $G$. Let $S$ be a finite symmetric subset of $G$ containing $1$; if  the 
subgroup $G_S$ generated by $S$ contains $c$, we define the {\bf{distortion function}}
\[
\delta_{c,S}(n)=\sup\{m\in\N:c^m\in S^n\}
\]
\end{defi}
By definition, $\delta_{c,S}(n)=\infty$ if and only if $c$ has finite order.
Clearly, if $S\subset T$ then $\delta_{c,S}\le \delta_{c,T}$. Also, $\delta_{c,S^k}(n)=\delta_{c,S}(kn)$.
In particular, if $S\subset T^k$, then $\delta_{c,S}\le\delta_{c,T}(kn)$. If   $S\preceq T$,   it follows that  $\delta_{c,S}\preceq\delta_{c,T}$, and
if $S\simeq T$ then $\delta_{c,S}\simeq\delta_{c,T}$.

If $S$ and $T$ both generate $G$ then $S\simeq T$ and $\delta_{c,S}\simeq\delta_{c,T}$. Thus, when $G$ is finitely generated,
 the $\simeq$-equivalence class of the distortion function only depends on $(G,c)$, not on the finite generating subset; 
it is called the distortion function of $c$ in $G$, and is denoted $\delta_c^G$, or simply $\delta_c$. 
The element $c$ is called {\bf{undistorted}} if $\delta_c(n)\preceq n$, and {\bf{distorted}} otherwise. 

\begin{eg}
Fix a pair of integers $k, \ell \geq 2$. 
In the Baumslag-Solitar group $B_k=\langle t,x\vert \quad t xt^{-1}=x^k\rangle$, we have $\delta^{B_k}_x(n)\simeq\exp(n)$.
In the "double" Baumslag-Solitar group $B_{k,\ell}$ (see~\cite{Gromov:AIIG} and \S~\ref{par:2Examples}), one finds double exponential distortion.
\end{eg}

It is natural to consider distortion in groups that are not finitely generated. We say that an element $c\in G$ 
is {\bf{undistorted}} if $\delta_c^H(n)\simeq n$ for every finitely generated subgroup $H$ of $G$ containing $c$.
Changing $H$ may change the distortion function $\delta^H_c$; for instance, if $c$ is not a torsion element, 
it is undistorted in $H=c^\Z$ but may be distorted in larger groups. Also, there are examples of pairs $(G,c)$ 
such that $c$ becomes more and more distorted, in larger and larger subgroups of $G$ (see \S~\ref{par:2Examples}). Thus, we have 
a good notion of distortion, but the distortion is not measured by an equivalence class of a function " $\delta_c^G$ ". 

We shall say that the {\bf{distortion type}} (or class) of $c$ in $G$ is at least $f$ if there is a finitely generated 
subgroup $H$ containing $c$ with $f\preceq \delta^H_c$, and is at most $g$ if $\delta^H_c\preceq g$ for 
all finitely generated subgroup $H$ containing $c$. If the distortion type is at least $f$ and at most $f$
simultaneously, we shall say that $f$ is the distortion type of $c$. For instance, $c$ may be exponentially, 
or doubly exponentially distorted in $G$. 
 
 \begin{eg} 
Let $\bfk$ be a field. Let $c$ be an element of  the general linear group $\GL_d(\bfK)$;  we have one of the following (see \cite{LMR2, LMR1} and \S~\ref{par:h-and-d})
\begin{itemize}
\item $c$ is not virtually unipotent, i.e. at least one of its eigenvalues in an algebraic closure of $\bfk$ is not a root of unity, and then $c$ is undistorted;
\item $c$ is virtually unipotent of infinite order, and then $\delta_c(n)\simeq\exp(n)$ (this occurs only if $\bfk$ has characteristic zero);
\item $c$ has finite order.
\end{itemize}
The dimension $d\ge 0$ does not intervene in this description. In contrast, the unipotent elementary matrix 
$e_{12}(1)=\Id + \delta_{1,2}$ is undistorted in $\SL_2(\Z)$ but has exponential distortion in $\SL_d(\Z)$ for $d\ge 3$. 
\end{eg}

%%%%%%%%%%%%%%%%%%%%%%%%%%%%%%%
\subsection{Distortion in Cremona groups}
%%%%%%%%%%%%%%%%%%%%%%%%%%%%%%%

Distortion in  groups of homeomorphisms is an active subject (see \cite{Avila:distortion, Calegari-Freedman, LeRoux-Mann, Militon:2013, Militon:2014}). For
instance, in the group of homeomorphisms of the sphere ${\mathbb{S}}^d$, every element is distorted. 
Our goal in this paper is to study distortion in groups of birational 
transformations. 

If $M$ is a projective variety over a field $\bfk$, we denote by 
$\Bir(M_\bfk)$ its group of birational transformations over $\bfk$. When $M$
is the projective space $\P^m_\bfk$, this group is the {\bf{Cremona group}} in 
$m$ variables
$\Cr_m(\bfk)=\Bir(\P^m_\bfk)=\Bir(\A^m_\bfk)$.
The problem is to describe the elements of $\Bir(M_\bfk)$ which are distorted in $\Bir(M_\bfk)$, 
and to estimate their distortion functions. 
%%%
\subsubsection{Degree sequences} \label{par:degrees}
%%%
Let $H$ be a hyperplane section of $M$, for some fixed embedding $M\subset\P^N_\bfk$. The {\bf{degree}} of a birational transformation $f\colon M\dasharrow M$
with respect to the polarization $H$ is the intersection product $\deg_H(f)=H^{m-1}\cdot f^{*}(H)$,
where $m=\dim(M)$. 
When $M$ is $\P^m_\bfk$ and $H$ is a hyperplane, then $\deg_H(f)$ is the degree of the
homogeneous polynomial functions $f_i$, without common factor of positive degree,  such that $f=[f_0:\cdots :f_m]$ in homogeneous coordinates. 

The degree function is almost submultiplicative (see \cite{Dinh-Sibony:2005Annals, NguyenBD:2017, TTTruong}): there is a constant
$C_{M,H}$ such that for all $f$ and $g$ in $\Bir(M_\bfk)$
\begin{equation}\label{eq:submultiplicative-degrees}
\deg_H(f\circ g)\leq C_{M,H} \deg_H(f)\deg_H(g).
\end{equation}
Thus, we can define the   {\bf{dynamical degree}} $\lambda_1(f)$ by
$
\lambda_1(f)=\lim_{n\to +\infty} (\deg_H(f^n)^{1/n}).
$
By definition, $\lambda_1(f)\geq 1$, and the following well-known lemma implies that $\lambda_1(f)=1$ when 
$f$ is distorted (see Section~\ref{par:dyna-degree}).

\begin{lem}\label{lem:stable-length}
Let $G$ be a group with a finite symmetric generating subset $S$. Let $\vert w\vert$ denote the word length of $w\in G$ with respect 
to the generating subset $S$. Then,
\begin{enumerate}
\item $\vert\cdot \vert$ is sub-additive: $\vert vw\vert \leq \vert v \vert+\vert w\vert$;
\item the {\bf{stable length}} $\stable(c):=\lim_{n\to \infty} \frac{1}{n}\vert c^n\vert$ is a well-defined element of $\R_+$;
\item $c$ is distorted if and only if $\stable(c)=0$. 
\end{enumerate}
\end{lem}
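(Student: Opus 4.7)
The plan is to prove the three items in order, using that the first two are classical and reduce the third to elementary book-keeping about the sub-additive sequence $\vert c^n\vert$.

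\textbf{Item (1).} This is immediate from the definition. Writing $v=s_1\cdots s_p$ and $w=t_1\cdots t_q$ as minimal-length words in $S$, the concatenation $s_1\cdots s_p t_1\cdots t_q$ represents $vw$, so $\vert vw\vert\leq p+q=\vert v\vert+\vert w\vert$.

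\textbf{Item (2).} Put $a_n=\vert c^n\vert$. By (1), $a_{n+m}\leq a_n+a_m$ and $a_n\geq 0$. Fekete's lemma for non-negative sub-additive sequences then yields $\stable(c):=\lim_{n\to\infty} a_n/n=\inf_{n\geq 1} a_n/n\in \R_+$.

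\textbf{Item (3).} The key reformulation is
\[
\delta_{c,S}(n)=\max\{m\in \N:\vert c^m\vert\leq n\},
\]
which is $+\infty$ exactly when $c$ has finite order, and which follows at once from the definition of $\delta_{c,S}$.

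Suppose first $\stable(c)=\lambda>0$. Then $\vert c^m\vert\geq \lambda m/2$ for $m$ large enough, so $\vert c^m\vert\leq n$ forces $m\leq 2n/\lambda+O(1)$, giving $\delta_{c,S}(n)\preceq n$, i.e.\ $c$ is undistorted.

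Conversely, assume $\stable(c)=0$. If $c$ has finite order, $\delta_{c,S}\equiv\infty$ is certainly not $\preceq n$, so $c$ is distorted. Otherwise $c$ has infinite order, so $a_n\to\infty$ (a non-torsion element has $a_n\geq 1$ for $n\neq 0$, and $a_n$ bounded would force $\{c^n\}$ finite by local finiteness of $S^{\vert c^n\vert}$, hence $c$ torsion). For each integer $K\geq 1$, choose $n_K$ with $a_{n_K}\leq n_K/K$, and set $m_K:=a_{n_K}$; then $c^{n_K}\in S^{m_K}$, hence
\[
\delta_{c,S}(m_K)\geq n_K\geq K\, m_K.
\]
Since $m_K\to\infty$ (otherwise the ratio $n_K/m_K$ would force $n_K\to\infty$ with $m_K$ bounded, contradicting $a_{n_K}=m_K$ and $a_n\to\infty$), the function $\delta_{c,S}$ is not dominated by any affine function of $n$, so $c$ is distorted.

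\textbf{Main obstacle.} There is no serious difficulty: the whole statement is a standard fact about sub-additive length functions. The only point requiring small care is the converse direction of (3), where one must exclude the pathology of $m_K$ staying bounded while $K\to\infty$; this is handled by observing that non-torsion elements satisfy $\vert c^n\vert\to\infty$, which is a consequence of local finiteness of the word metric on a finitely generated group.
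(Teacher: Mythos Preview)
Your proof is correct. The paper does not actually prove this lemma; it introduces it as ``the following well known lemma'' and states it without argument, so there is nothing to compare your approach against. Your treatment of item~(3) is the standard one, and the only delicate point --- that $m_K\to\infty$ in the converse direction --- is handled correctly via the observation that $\vert c^n\vert\to\infty$ for non-torsion $c$ (any bounded subsequence would place infinitely many distinct powers $c^n$ in a single finite ball $S^M$).
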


%%%
\subsubsection{Distortion in dimension $2$} 
%%%
Assume, for simplicity, that the field $\bfk$ is algebraically closed. 
Typical elements of $\Cr_d(\bfk)$ have  dynamical degree $>1$. 
At the opposite, we have the notion of {\bf{algebraic elements}}. A birational transformation 
$f\colon M\dasharrow M$ is {\bf{algebraic}}, or {\bf{bounded}}, if  $(\deg_H(f^{n}))_{n\ge 0}$ is
a bounded sequence of integers; by a theorem of Weil (see \cite{Weil}), $f$ is bounded if and
only if there exists a projective variety $M'$, a birational map $\varphi\colon M'\dasharrow M$, and an integer $m>0$, 
such that $\varphi^{-1}\circ f^m \circ \varphi$ is an element of $\Aut(M')^0$ (the connected
component of the identity in the group of automorphisms $\Aut(M')$). In the case of surfaces, bounded
elements are also called {\bf{elliptic}}; we shall explain this terminology in Section~\ref{prundi}. 

\begin{thm}\label{main}
Let $\bfk$ be a field. If an element $f\in \Cr_2(\bfk)$ is distorted, then $f$
is elliptic. If $\bfk$ is algebraically closed and of characteristic $0$, and $f\in \Cr_2(\bfk)$
 is elliptic and of infinite order, then:
\begin{itemize}
\item if some positive power of $f$ is conjugate to a unipotent automorphism 
 of $\P^2_\bfk$, then $f$ has double exponential distortion;
\item otherwise, $f$ has exponential distortion.
\end{itemize}
 % either $f$ has finite order, or its distortion is exponential, or its distortion is doubly exponential and in that case $f$ is conjugate to a unipotent automorphism  of $\P^2_\bfk$. 
\end{thm} 

The first assertion extends to $\Bir(X)$ for all projective surfaces (see Theorems~\ref{thm:Halphen-Undistorted} and~\ref{thm:Jonquieres-Undistorted}), but the second does not.
 For instance, if $X$ is 
a complex abelian surface and $\Aut(X)$ has only finitely many connected components, 
every translation of infinite order is undistorted and elliptic.
 
Consider, in $\Cr_2(\bfk)$, the element $(x,y)\stackrel{s}\mapsto (x,xy)$; it is not elliptic and by the above theorem, it is not distorted in $\Cr_2(\bfk)$. On the other hand, the natural embedding $\Cr_2(\bfk)\subset \Cr_3(\bfk)$ maps it to
 $(x,y,z)\mapsto (x,xy,z)$, which is exponentially distorted in $\Bir(\A^3_\bfk)$, while its degree growth remains linear. Thus Theorem \ref{main} is specific to the projective plane. 

\begin{que} (see Section~\ref{par:h-and-d})

(A)  In Theorem \ref{main}, can we remove the restriction concerning the characteristic or the algebraic closedness of the field $\bfk$?
 
(B) Can we find an element of infinite order with more than double exponential distortion in the Cremona group $\Cr_m(\C)$, for some $m\geq 3$?  
\end{que}

%%%
\subsection{Hyperbolic spaces, horoballs, and distortion} 
%%%

Our proof of Theorem~\ref{main} makes use of the action of $\Cr_2(\bfk)$ on an infinite dimensional hyperbolic space $\HH_\infty$,
already at the heart of several articles (see~\cite{Cantat:SLC}). There are elements $f$ of $\Cr_2(\bfk)$ acting as parabolic isometries on $\HH_\infty$, 
with a unique fixed point $\xi_f$ at the boundary of the hyperbolic space. We shall show that the orbit of a sufficiently small
horoball centered at $\xi_f$ under the action of $\Cr_2(\bfk)$ is made of a family of pairwise disjoint horoballs. We refer
to Theorem~C in Section~\ref{par:Par-Horoball} for that result.  Theorem~B, proved in Section~\ref{prundi}, 
is a general result for groups acting by isometries
on hyperbolic spaces that provides a control of the distortion of parabolic elements. 

%%%%%%%%%%%%%%%%%%%%%%%%%%%%%%%%%%%%%%%%%%%%%
\subsection{Remark and Acknowledgement}
%%%%%%%%%%%%%%%%%%%%%%%%%%%%%%%%%%%%%%%%%%%%%

One step towards Theorem~\ref{main} is to prove that the so-called Halphen twists of $\Cr_2(\bfk)$ (a certain type of parabolic elements) are
not distorted. Blanc and Furter obtained simultaneously another proof of that result;
instead of looking at the geometry of horoballs, as in our Theorem~\ref{thb}, 
they prove a very nice result on the length of elements of $\Cr_2(\bfk)$ in terms of the
generators provided by Noether-Castelnuovo theorem (the generating sets being $\PGL_3(\bfk)$
and transformations preserving a pencil of lines). Our proof applies directly to Halphen twists on 
non-rational surfaces. 

We thank J\'er\'emy Blanc and Jean-Philippe Furter, as well as Vincent Guirardel, Anne Lonjou, 
and Christian Urech for interesting discussions on this topic.

%%%%%%%%%%%%%%%%%%%%%%%%%%%%%%%%%%%%%%%%%%%%%
%%%%%%%%%%%%%%%%%%%%%%%%%%%%%%%%%%%%%%%%%%%%%
\section{Degrees and upper bounds on the distortion}\label{par:dyna-degree}
%%%%%%%%%%%%%%%%%%%%%%%%%%%%%%%%%%%%%%%%%%%%%
%%%%%%%%%%%%%%%%%%%%%%%%%%%%%%%%%%%%%%%%%%%%%
  
The following proposition shows that the degree growth may be used to control the distortion of
a birational transformation. 

\begin{pro}\label{degd}
Let $(M,H)$ be a polarized projective variety, and $f$ be a birational transformation of $M$.
\begin{enumerate}
\item If $\deg_H(f^n)$ grows exponentially, then $f$ is undistorted. 
\item If $\deg(f^n)\succeq n^\alpha$ for some $\alpha>0$, the distortion of $f$ is at most exponential.
\end{enumerate}
\end{pro}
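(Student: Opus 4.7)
The plan is to use the almost-submultiplicativity of $\deg_H$ from inequality (1.1) to bound the degree of any word in terms of its length, then compare this upper bound to the hypothesized lower bound on $\deg_H(f^n)$.

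First, I would fix a finitely generated subgroup $H\subset \Bir(M_\bfk)$ containing $f$, together with a finite symmetric generating set $S\ni 1$, and set $M_S=\max_{s\in S}\log\deg_H(s)$. Setting $\Delta(g)=\log\deg_H(g)$, the submultiplicativity $\deg_H(fg)\leq C_{M,H}\deg_H(f)\deg_H(g)$ translates into $\Delta(fg)\leq \log C_{M,H}+\Delta(f)+\Delta(g)$. A straightforward induction on word length $|w|_S$ yields
\[
\Delta(w)\leq |w|_S\bigl(\log C_{M,H}+M_S\bigr)
\]
for every $w\in H$. In other words, the $\deg_H$ of an element can only grow at most exponentially with its word length in any finite generating set.

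For assertion (1), I would apply this inequality to $w=f^n$ and divide by $n$. The left-hand side converges to $\log\lambda_1(f)$, and the right-hand side converges to $\stable(f)\cdot(\log C_{M,H}+M_S)$, using Lemma~\ref{lem:stable-length}(2). If $\deg_H(f^n)$ grows exponentially, then $\lambda_1(f)>1$, so $\stable(f)>0$, and Lemma~\ref{lem:stable-length}(3) forces $f$ to be undistorted in $H$. Since $H$ was an arbitrary finitely generated subgroup containing $f$, this gives the claim.

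For assertion (2), suppose $f^m\in S^n$. The word-length bound above yields $\deg_H(f^m)\leq (C_{M,H}e^{M_S})^n$. Combined with the hypothesis $\deg_H(f^m)\succeq m^\alpha$, i.e.\ $m^\alpha\leq C_1\deg_H(f^m)+C_2$ for suitable constants, one obtains $m\leq C_3\bigl(C_{M,H}e^{M_S}\bigr)^{n/\alpha}$. Hence $\delta_{f,S}(n)$ is bounded by an exponential function of $n$, as required. Neither step presents a genuine obstacle: both are direct consequences of the submultiplicativity (1.1), and the only mild care needed is to track the dependence of the constants on the chosen generating set, which is harmless because we are proving a statement about distortion types rather than about a specific $\delta_{f,S}$.
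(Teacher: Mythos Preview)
Your proof is correct and follows essentially the same route as the paper: both use the almost-submultiplicativity of $\deg_H$ to bound the degree of any word exponentially in its length, then play this off against the assumed lower bound on $\deg_H(f^n)$. The only cosmetic difference is that for (1) you pass through the stable length and Lemma~\ref{lem:stable-length}(3), whereas the paper bounds $\delta_{f,S}(n)$ directly; these are equivalent, though you should avoid reusing the letter $H$ for the finitely generated subgroup since $H$ already denotes the polarization.
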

\begin{proof}
According to Equation~\eqref{eq:submultiplicative-degrees}, the degree function is almost submultiplicative; 
replace it by $\deg_H'(f):=\deg_H(f)/C_{M,H}$ to get a submultiplicative function. 

If $S$ is a finite symmetric subset of $\Bir(M)$, and $D$ is the maximum of $\deg'_H(g)$ for $g$ in $S$, 
then $D^n$ is an upper bound for $\deg'_H$ on the ball $S^n$. 
Hence if $\deg'(f^m)\geq Cq^m$ for some constants $C>0$  and $q>1$, and if $f^m\in S^n$ we have $Cq^m\leq D^n$. Taking
logarithm, we get $m\log(q)+C\le n\log(D)$, and then $m\le \log(q)^{-1}(nlog(D)-C)$. Thus 
\[
\delta_{f,S}(n)\leq \frac{(nlog(D)-C)}{\log(q)}\preceq n
\]
and the first assertion is proved. 
Now, assume that  $\deg_H'(f^m)\ge cm^\alpha$ for some positive constants $c$ and $\alpha$.  Then 
$cm^\alpha\le D^n$, so $m\le c^{-1/\alpha}D^{n/\alpha}$. Thus $\delta_{f,S}(n)\le c^{-1/\alpha}D^{n/\alpha}\preceq\exp(n)$
and the second assertion follows. 
\end{proof}

\begin{rem} More generally, consider an increasing function $\alpha$ such that 
$\alpha(m)\leq   \log \deg'_H(f^m)$ for all $m\geq 1$. Let $\beta$ be a decreasing inverse of $\alpha$, i.e.\
a function $\beta\colon \R_+\to \R_+$ such that $\beta(\alpha(m))=m$ for all $m$.  We have 
\[
\alpha(m)\leq \log(\deg_H'(f^m))\leq n\log(D)
\]
if $f^m$ is in $S^n$, hence $\delta_{f,S}(n)\leq \beta(n\log(D))$. However, we do not know any example of 
birational transformation with  intermediate (neither exponential nor polynomially bounded) degree growth. 
See \cite{Urech} for a lower bound on the degree growth when $f\in \Aut(\A^m_\bfk)$.
\end{rem}
  
%%%%%%%%%%%%%%%%%%%%%%%%%%%%%%%%%%%%%%%%%%%%%
%%%%%%%%%%%%%%%%%%%%%%%%%%%%%%%%%%%%%%%%%%%%%
\section{Heights and distortion}\label{par:h-and-d}
%%%%%%%%%%%%%%%%%%%%%%%%%%%%%%%%%%%%%%%%%%%%%
%%%%%%%%%%%%%%%%%%%%%%%%%%%%%%%%%%%%%%%%%%%%%

In this section we study the distortion of automorphisms of $\P^m_\bfk$ in the groups
$\Aut(\P^m_\bfk)$ and  $\Cr_m(\bfk)=\Bir(\P^m_\bfk)$. 

%%%%%%%%%%%%%%%%%%%%%%%%%%%%%%%%%%%%%%%%%%%%%
\subsection{Distortion and monomial transformations}
%%%%%%%%%%%%%%%%%%%%%%%%%%%%%%%%%%%%%%%%%%%%%

Let $\bfk$ be an algebraically closed field of characteristic zero.
Here, we show that all elements of $\PGL_{m+1}(\bfk)$ are distorted in $\Cr_m(\bfk)$, and we compute their distortion rate. 

%%%%%%%%%%%%%%%%%%%%%%%%%%%%%%%%%%%%%%%%%%%%%
\subsubsection{Monomial transformations and distortion of semisimple automorphisms}
%%%%%%%%%%%%%%%%%%%%%%%%%%%%%%%%%%%%%%%%%%%%%

The group $\GL_m(\Z)$ acts by automorphisms on the $m$-dimensional multiplicative group $\Gm^m$:
if $A=[a_{i,j}]$  is in $\GL_m(\Z)$, then 
$A(x_1, \ldots, x_m)=(y_1, \ldots, y_m)$ with
\begin{equation}
y_j=\prod_i x_i^{a_{i,j}}.
\end{equation}
 The group $\Gm^m(\bfk)$ acts also on itself by translations. 
Altogether, we get an embedding of $\GL_m(\Z)\ltimes \Gm^m(\bfk)$ in $\Bir(\P^m_\bfk)$.

If $s$ is a fixed element of $\bfk^\times$, we denote by $\varphi_s\colon \Z^m\to \Gm^m$ the homomorphism defined by $\varphi_s(n_1,\dots,n_d)=(s^{n_1} ,\dots,s^{n_d} )$. This homomorphism is injective if and only if $s$ is not a root of unity.  
Its image   $\varphi_s(\Z^m)$ is normalized by the monomial group $\GL_d(\Z)$; in this way, every element $s\in \bfk^\times$
of infinite order determines an embedding of $\GL_m(\Z)\ltimes \Z^m$ into $\Bir(\P^m_\bfk)$, the image
of which is $\GL_m(\Z)\ltimes \varphi_s(\Z^m)$.
The following lemma is classical (see~\cite{LMR1,LMR2} for instance). 

\begin{lem}
For every $m\ge 2$, the abelian subgroup $\Z^m$  is exponentially distorted in $\GL_m(\Z)\ltimes \Z^m$. More precisely, 
$\vert g^n\vert \simeq \log(n)$ for every non-trivial element $g$ in the (multiplicative) abelian group $\Z^m$.
\end{lem}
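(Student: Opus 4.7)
The strategy is to establish $|g^n|_G \simeq \log n$ in $G = \GL_m(\Z) \ltimes \Z^m$ (with multiplication $(A, v)(B, w) = (AB, v + Aw)$) by proving the two inequalities separately. For the lower bound $|g^n|_G \succeq \log n$, I fix a finite symmetric generating set $S$ and let $D$ bound the operator norms $\|A\|$ and lattice norms $\|v\|$ of all elements $(A, v) \in S$. A straightforward induction shows that the $\Z^m$-component of any product $s_1 \cdots s_k \in S^k$ has norm at most $D + D^2 + \cdots + D^k \leq D^{k+1}$. Since $g^n = (I, n g)$ has $\Z^m$-component of norm $n\|g\|$, membership $g^n \in S^k$ forces $n \preceq D^k$, which gives $k \succeq \log n$.

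For the upper bound $|g^n|_G \preceq \log n$, I choose a hyperbolic matrix $A \in \GL_m(\Z)$ with $e_1$ a cyclic vector (such $A$ exist for $m \geq 2$: e.g.\ the Fibonacci matrix for $m = 2$, or a companion matrix of an expansive monic polynomial with constant term $\pm 1$ in general), and let $\lambda > 1$ be its Perron eigenvalue. The basic relation $A^k (I, v) A^{-k} = (I, A^k v)$ already produces the translation by $A^k v$ as a word of length $2k + O(1)$. More flexibly, a direct computation yields
\[
\bigl((I, v)^{c_0} A\bigr) \bigl((I, v)^{c_1} A\bigr) \cdots \bigl((I, v)^{c_{K-1}} A\bigr) A^{-K} = \Bigl(I, \textstyle\sum_{i = 0}^{K - 1} c_i A^i v\Bigr),
\]
realizing any translation $\sum c_i A^i v$ with bounded digits $c_i$ as a word of length $O(K)$; a symmetric formula involving $A^{-1}$ accommodates negative exponents as well.

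It remains to represent the exact lattice vector $N g$ in this form with $K = O(\log N)$. After conjugating by an element of $\GL_m(\Z)$ to reduce to $g = e_1$, I use the isomorphism $\Z^m \cong \Z[A] \cdot e_1 \cong \Z[X] / p(X)$ (where $p$ is the characteristic polynomial of $A$) to rephrase the question as expanding $N \in \Z \subset \Z[X] / p(X)$ as $\sum c_i X^i$ with bounded digits and $|i| \leq O(\log N)$. Since $\lambda > 1$ is expansive and $A^{-1} \in \Z[A]$, a greedy algorithm produces such an expansion — explicitly in the $2 \times 2$ case via the identity $A^k e_1 + A^{-k} e_1 = L_{2k} \, e_1$ (Lucas sequence, with $L_{2k} \asymp \lambda^k$), which gives integer multiples of $e_1$ of size $\lambda^k$ reachable in word length $O(k)$, and every $N$ admits a greedy expansion $N = \sum \epsilon_i L_{2k_i}$ with bounded digits and $O(\log N)$ terms. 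The main obstacle is precisely this exact-match step: the counting bound readily shows that lattice points of norm $\asymp N$ are reachable in word length $O(\log N)$, but pinning down the specific integer multiple $N e_1$ rather than just a vector of the right magnitude requires the expansive base representation in the ring $\Z[A]$ and the combined use of positive and negative powers of $A$.
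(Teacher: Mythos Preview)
The paper does not actually prove this lemma; it calls it classical and defers to Lubotzky--Mozes--Raghunathan, whose methods are geometric (comparing the word metric on a lattice with the ambient Lie-group metric). Your argument is instead explicit and combinatorial, which is a legitimate alternative. The lower bound is clean and correct.

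For the upper bound there are issues. First, a slip: a monic integer polynomial of degree $m\ge 1$ with constant term $\pm 1$ can never be expansive, since the product of its roots has modulus $1$; what you need, and what you say in the main clause, is a \emph{hyperbolic} companion matrix, and those do exist for every $m\ge 2$. Second, your reduction ``conjugate by $\GL_m(\Z)$ to $g=e_1$'' only works when $g$ is primitive; the general case follows by writing $g=d g_0$ with $g_0$ primitive, but you should say so. Third, the Lucas identity as written is off: with the standard Fibonacci matrix one has $A^{2k}e_1+A^{-2k}e_1=L_{2k}\,e_1$, not $A^{k}e_1+A^{-k}e_1$ for all $k$.

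The substantive gap is the general-$m$ step. The assertion that ``a greedy algorithm'' produces a bounded-digit expansion $N=\sum_{|i|\le K} c_i X^i$ in $\Z[X]/p(X)$ with $K=O(\log N)$ is the whole content of the upper bound, and for $m\ge 3$ you give no argument. Your $m=2$ Lucas computation is essentially correct (the representability of every integer as a bounded-coefficient combination of the $L_{2j}$ follows from $L_{2j+2}=3L_{2j}-L_{2j-2}$ by a routine base-$3$-with-carries argument), so the right fix is simply to reduce general $m$ to $m=2$ via the block embedding $\GL_2(\Z)\ltimes\Z^2\hookrightarrow\GL_m(\Z)\ltimes\Z^m$, which sends $e_1$ to $e_1$. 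With that reduction inserted, your proof is complete; as written, the $m\ge 3$ case is not.
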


For $u\in\bfk^\times$, the subgroup $\varphi_u(\Z^m)$ of $\Gm^m(\bfk)$ acts by translations on $\Gm^m(\bfk)$. This determines a  
subgroup $V_u$ of $\Cr_m(\bfk)$ acting by diagonal transformations $(x_1,\dots,x_m)\mapsto (u^{n_1}x_1,\dots,u^{n_m}x_m)$.  
By the previous lemma, the distortion of every element in $V_u$ is at least exponential in $\Cr_m(\bfk)$ (when $u$ is a root of unity, 
the distortion is infinite).

Now let $u$ be an arbitrary diagonal transformation: $u(x)=(u_1x_1, \ldots, u_mx_m)$, where $(u_i) \in\Gm^m(\bfk)$. Consider
the transformations $g_i=(x_1,\dots,x_{i-1},u_ix_i,x_{i+1},\dots, x_m)$. Then the $g_i$ pairwise commute and $u=g_1\dots g_m$. Since $g_i\in V_{u_i}$, it is at least exponentially distorted in $\GL_m(\Z)\ltimes \Gm^m(\bfk)$. Thus, $u$ is at least exponentially distorted in $\GL_m(\Z)\ltimes \Gm^m(\bfk)$. We have proved:
 
\begin{lem}\label{lem:diago-distortion}
Let $\bfk$ be a field and $m\geq 2$ be an integer. 
In $\Bir(\P^m_\bfk)$,  every linear, diagonal transformation is at least exponentially distorted. 
\end{lem}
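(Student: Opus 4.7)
The plan is to assemble the decomposition argument that was sketched just before the statement. First I would write the diagonal transformation $u(x) = (u_1 x_1, \ldots, u_m x_m)$ as a product $u = g_1 g_2 \cdots g_m$ of $m$ pairwise commuting diagonal transformations, where $g_i$ multiplies only the $i$-th coordinate by $u_i$. Once I show that each $g_i$ is at least exponentially distorted in $\Bir(\P^m_\bfk)$, exponential distortion of $u$ follows: fixing a finitely generated subgroup $H$ of $\Bir(\P^m_\bfk)$ that contains $u$ together with the finitely many elements needed to witness the distortion of every $g_i$, the relation $u^N = g_1^N \cdots g_m^N$ yields $|u^N|_H \leq \sum_i |g_i^N|_H = O(\log N)$.

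The core step is therefore exponential distortion of a single factor $g_i$. If $u_i$ is a root of unity, then $g_i$ has finite order and there is nothing to prove. Otherwise the homomorphism $\varphi_{u_i} \colon \Z^m \to \Gm^m(\bfk)$ is injective, and because its image is normalized by the monomial action of $\GL_m(\Z)$, one obtains an embedding
$$\GL_m(\Z) \ltimes \Z^m \hookrightarrow \Bir(\P^m_\bfk)$$
carrying the $i$-th standard basis vector to $g_i$. Exponential distortion of $g_i$ in this semidirect product is exactly the content of the preceding lemma; concretely, using $m \geq 2$, one picks a hyperbolic $A \in \GL_m(\Z)$ with an eigenvalue $\lambda > 1$ and uses iterated conjugation $A^n g_i A^{-n}$ to build short words for powers $g_i^{\lfloor \lambda^n \rfloor}$, producing the bound $|g_i^N| \preceq \log N$.

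The only housekeeping point is to verify that the witnesses for each factor (the $g_i$ together with their respective monomial conjugators) can be assembled into a single finitely generated subgroup of $\Bir(\P^m_\bfk)$; this is automatic from the embedding $\GL_m(\Z) \ltimes \Gm^m(\bfk) \hookrightarrow \Bir(\P^m_\bfk)$ and the fact that $\GL_m(\Z)$ is itself finitely generated for $m \geq 2$. I do not anticipate any genuine obstacle: the main input is the classical distortion lemma recalled just above, and once that is granted the proof is formal and essentially transcribes the paragraph immediately preceding the statement.
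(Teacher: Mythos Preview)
Your proposal is correct and follows essentially the same route as the paper: decompose $u$ as the commuting product $g_1\cdots g_m$, invoke the preceding lemma (exponential distortion of $\Z^m$ inside $\GL_m(\Z)\ltimes\Z^m$) for each factor via the embedding $\varphi_{u_i}$, and combine. Your added housekeeping remarks (the root-of-unity case, assembling the witnesses into one finitely generated subgroup) make explicit what the paper leaves implicit, but the argument is the same.
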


%%%%%%%%%%%%%%%%%%%%%%%%%%%%%%%%%%%%%%%%%%%%%
\subsubsection{Distortion of unipotent automorphisms}
%%%%%%%%%%%%%%%%%%%%%%%%%%%%%%%%%%%%%%%%%%%%%

\begin{lem}\label{lem:unip-distortion}
If $U$ is a unipotent element of $\SL_{m+1}(\bfk)$, then $U$ is at least exponentially distorted in 
$\SL_{m+1}(\bfk)$, and it is at least doubly exponentially distorted in $\Bir(\P^m_\bfk)$ for $m\geq 2$. 
\end{lem}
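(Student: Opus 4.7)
The plan is to realise $U$ inside an iterated Baumslag--Solitar-type configuration. The key algebraic input is the construction, for $U=\exp(N)$ with $N\in\sll_{m+1}(\bfk)$ a nonzero nilpotent matrix, of an element $D\in\SL_{m+1}(\bfk)$ satisfying $DND^{-1}=4N$, or equivalently $DUD^{-1}=U^4$. After conjugating so that $N$ is in Jordan normal form, with Jordan blocks of sizes $k_1,\dots,k_r$ summing to $m+1$, I would take $D$ block-diagonal, putting on the $i$-th block the diagonal matrix with entries $2^{k_i-1},\,2^{k_i-3},\dots,2^{-(k_i-1)}$. The ratio of consecutive diagonal entries within a block is $4$, so each super-diagonal entry of $N$ is scaled by $4$ under conjugation; the exponents $(k_i-1),(k_i-3),\dots,-(k_i-1)$ are symmetric about $0$, hence $\det(D)=1$. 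Moreover only integer powers of $2$ appear, so $D\in\SL_{m+1}(\Q)\subset\SL_{m+1}(\bfk)$ without any extra assumption on $\bfk$.

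With this $D$, a straightforward induction on $j$ gives $D^j U D^{-j}=U^{4^j}$ for every $j\in\N$. Inside $\langle U,D\rangle\subset\SL_{m+1}(\bfk)$, taking the generating set $S=\{U^{\pm 1},D^{\pm 1}\}$, we obtain $\vert U^{4^j}\vert_S\leq 2j+1$, hence $\delta_{U,S}(n)\succeq 4^{(n-1)/2}$. This proves the first assertion.

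For the second assertion I would iterate the trick. The element $D$ is a diagonalisable projective transformation, so by Lemma~\ref{lem:diago-distortion} -- which is where the hypothesis $m\geq 2$ enters -- it is at least exponentially distorted in $\Bir(\P^m_\bfk)$. Pick a finitely generated subgroup $H'\subset\Bir(\P^m_\bfk)$ containing $D$, together with a finite symmetric generating set $S'$, such that $\vert D^j\vert_{S'}\preceq\log j$. Setting $H=\langle H',U\rangle$ and $S=S'\cup\{U^{\pm 1}\}$, the identity $U^{4^j}=D^j U D^{-j}$ gives
\[
\vert U^{4^j}\vert_S\leq 2\vert D^j\vert_{S'}+1\preceq \log j,
\]
which rearranges to $\delta_{U,S}(n)\succeq\exp(\exp(cn))$ for some $c>0$. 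Thus $U$ is at least doubly exponentially distorted in $\Bir(\P^m_\bfk)$.

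The main obstacle I anticipate is the algebraic construction of $D$: a single diagonal scaling only controls one super-diagonal at a time, so for a nilpotent $N$ consisting of several Jordan blocks of different sizes one must design the scaling block by block and verify that the resulting block-diagonal matrix still has determinant one (this is precisely what forces the symmetric choice of weights on each block). Once the relation $DUD^{-1}=U^4$ is established, the passage from exponential to doubly exponential distortion is the formal observation that replacing the conjugator by an exponentially distorted one turns the Baumslag--Solitar trick into a twofold iterated version.
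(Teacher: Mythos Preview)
Your argument is correct, and in fact streamlines the paper's own proof. Both proofs rest on the same two ingredients: a Baumslag--Solitar type relation $DUD^{-1}=U^{k}$ with $D$ diagonal, and then Lemma~\ref{lem:diago-distortion} to make $D$ itself exponentially distorted inside $\Bir(\P^m_\bfk)$. The difference is organisational. The paper first treats the elementary matrices $\Id+E_{i,j}$ (with the diagonal conjugator $A$ built by hand in size~$2$) and asserts that this ``implies'' the general unipotent case; for the doubly exponential bound it performs an explicit $3\times 3$ computation expressing $U^{K^n}$ as a bounded-length word in $A^n$, $C^n$ and a few fixed matrices, and then appeals to a recursion on $m$. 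You instead produce, in one stroke and for an arbitrary unipotent $U=\exp(N)$, a single block-diagonal $D\in\SL_{m+1}(\Q)$ with $DND^{-1}=4N$, hence $DUD^{-1}=U^4$; the symmetric choice of weights $2^{k_i-1},2^{k_i-3},\dots,2^{-(k_i-1)}$ on each Jordan block is exactly what forces $\det D=1$. This handles all sizes and all Jordan types simultaneously, and the passage from exponential to doubly exponential distortion is then the one-line substitution you describe.

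One small remark: you should state explicitly, as the paper does, that in positive characteristic a unipotent element has finite order, so the statement is vacuous there. Your use of $U=\exp(N)$ (and of $2^{-1}\in\bfk$) already commits you to characteristic zero, but it is worth saying so rather than leaving it implicit.
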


Consequently,  the image of $U$ has finite order in every linear representation of (large enough subgroups of) the Cremona group.
Note that (in characteristic zero) this already indicates that $\Cr_1(\bfk)\subset\Cr_2(\bfk)$ is distorted in the sense that the translation 
$x\mapsto x+1$, which has exponential distortion in $\Cr_1(\bfk)\simeq\PGL_2(\bfk)$, has double exponential distortion in $\Cr_2(\bfk)$. 

\begin{proof}
Unipotent elements of $\SL_{m+1}(\bfk)$ have finite order if the characteristic of the field is positive; hence, we assume that
${\mathrm{char}}(\bfk)=0$. Consider the element 
\begin{equation}
U=\begin{pmatrix} 1 & 1\\ 0 & 1\end{pmatrix}
%U=\left(\begin{array}{cc} 1 & 1 \\ 0 & 1\end{array}\right)
\end{equation}
of $\SL_2(\bfk)$. Let $A\in \SL_2(\bfk)$ be the diagonal matrix with coefficients $2$ and $1/2$ on the diagonal: 
$A^nUA^{-n}=U^{4^n}$ and $U$ is exponentially distorted  in the subgroup of $\SL_2(\bfk)$ generated by $U$ and $A$.
 Similarly, consider a unipotent matrix $U_{i,j}= \Id+ E_{i,j}$, where $E_{i,j}$ is the  $(m+1)\times (m+1)$ matrix with only
 one non-zero coefficient, namely  $e_{i,j}=1$; 
then $U_{i,j}$ is exponentially distorted in $\SL_{m+1}(\bfk)$: there is a diagonal matrix
$A$ such that $\vert U_{i,j}^n\vert \simeq \log(n)$ in the group $\langle U_{i,j}, A\rangle$, for all $n\geq 1$. 
This implies that unipotent matrices are exponentially distorted in $\SL_{m+1}(\bfk)$.

As a second step, consider a $3\times 3$ Jordan block and its iterates:
\begin{equation}
U= \begin{pmatrix} 1 & 1 & 0 \\ 0 & 1 & 1 \\ 0 & 0 & 1 \end{pmatrix}, \quad U^n= \begin{pmatrix} 1 & n & n(n-1)/2 \\ 0 & 1 & n \\ 0 & 0 & 1 \end{pmatrix}.
%U= \left(\begin{array}{ccc} 1 & 1 & 0 \\ 0 & 1 & 1 \\ 0 & 0 & 1 \end{array} \right), \quad U^n= \left(\begin{array}{ccc} 1 & n & n(n-1)/2 \\ 0 & 1 & n \\ 0 & 0 & 1 \end{array}  \right).
\end{equation}
We want to prove that $U$ is doubly exponentially distorted in $\Cr_2(\bfk)$. 
Take iterates $U^{K^n}$ for some integer $K>1$. Then, conjugating by $A^n$, and multiplying by $B$, with
\begin{equation}
A= \begin{pmatrix} 1 & 0 & 0 \\ 0 & K & 0 \\ 0 & 0 & 1 \end{pmatrix}, \quad B= \begin{pmatrix} 1 & 0 & 0 \\ 0 & 1 & -1 \\ 0 & 0 & 1 \end{pmatrix}, \quad C=\begin{pmatrix} K & 0 & 0 \\ 0 & 1 & 0 \\ 0 & 0 & 1 \end{pmatrix}
%A= \left(\begin{array}{ccc} 1 & 0 & 0 \\ 0 & K & 0 \\ 0 & 0 & 1 \end{array} \right), \quad B= \left(\begin{array}{ccc} 1 & 0 & 0 \\ 0 & 1 & -1 \\ 0 & 0 & 1 \end{array} \right), \quad C= \left(\begin{array}{ccc} K & 0 & 0 \\ 0 & 1 & 0 \\ 0 & 0 & 1 \end{array} \right),
\end{equation}
we get a new matrix $BA^{-n}U^{K^n}A^n=[v_{i,j}(n)]$ which is upper triangular; its coefficients are equal to $1$ on the diagonal, $v_{1,2}=K^n$,  $v_{1,3}=K^n(K^n-1)/2$ 
and $v_{2,3}=0$. Conjugating with $C^n$ changes $v_{1,2}$ into $v_{1,2}'=1$ and $v_{1,3}$ into $v_{1,3}'=(K^n-1)/2$. 
Multiplying by the unipotent matrix $D=\Id-E_{1,2}+1/2E_{1,3}$ changes 
$v_{1,2}'$ into $0$ and $v'_{1,3}$ into $K^n$. One more conjugacy by $C^n$ gives a matrix $E$ with constant coefficients. Thus $U^{K^n}$ 
is a word of finite length (independent of $n$) in $A^n$, $C^n$, and a fixed, finite number of unipotent matrices ($B$, $D$, $E$). 
Since $A$ and $C$ are diagonal matrices, they satisfy $\vert A^n\vert\sim \log(n)$ and $\vert C^n\vert\sim \log(n)$ in some finitely generated subgroup of $\Cr_2(\bfk)$. Thus, $U$ is doubly exponentially distorted. 

This argument and a recursion starting at $m=2$ proves the general result. 
\end{proof}

%%%%%%%%%%%%%%%%%%%%%%%%%%%%%%%%%%%%%%%%%%%%%
\subsubsection{Distortion of linear projective transformations}\label{par:disto-autom-Pm}
%%%%%%%%%%%%%%%%%%%%%%%%%%%%%%%%%%%%%%%%%%%%%

Every $A\in \PGL_{m+1}(\bfk)$ is the product
of a semisimple element $S_A$ with a unipotent element $U_A$ such that $S_A$ and $U_A$ 
commute. When $\bfk$ is algebraically closed, $S_A$ is diagonalizable. 
By Lemmas~\ref{lem:diago-distortion} and~\ref{lem:unip-distortion}, $A$ is at least
exponentially distorted (resp. doubly exponentially distorted if $S_A$ has finite order).

%%%%%%%%%%%%%%%%%%%%%%%%%%%%%%%%%%%%%%%%%%%%%
\subsection{Heights and upper bounds}
%%%%%%%%%%%%%%%%%%%%%%%%%%%%%%%%%%%%%%%%%%%%%

\begin{thm}\label{thm:dbl-expo}
Let $\bfk$ be an algebraically closed field of characteristic zero. Let $A$ be an element of $\Aut(\P^m_\bfk)$
given by a matrix in $\SL_{m+1}(\bfk)$ of infinite order. Then, its distortion in the Cremona group $\Bir(\P^m_\bfk)$ 
is doubly exponential if the matrix is virtually unipotent, and simply exponential otherwise. \end{thm}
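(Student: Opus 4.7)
The lower bounds are already in hand: Lemmas~\ref{lem:diago-distortion} and~\ref{lem:unip-distortion}, combined with the Jordan decomposition in \S\ref{par:disto-autom-Pm}, show that every infinite-order $A \in \PGL_{m+1}(\bfk)$ is at least exponentially distorted in $\Bir(\P^m_\bfk)$, and at least doubly exponentially so when $A$ is virtually unipotent. What remains is the matching upper bound: for every finitely generated subgroup $H = \langle S \rangle \subset \Bir(\P^m_\bfk)$ containing $A$, we must control $\delta^H_A(n)$ from above by an exponential (resp.\ doubly exponential) function of $n$.

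The plan is a height-theoretic version of the degree argument of Proposition~\ref{degd}. Since $S$ is finite, the homogeneous polynomial representatives of elements of $S$ have coefficients in a finitely generated subring of $\bfk$, hence in a finitely generated field $K \subset \bfk$ of characteristic zero. On coefficient vectors of elements of $\Bir(\P^m_\bfk)$ defined over $K$, we set up a logarithmic height $h$, either via Moriwaki's arithmetic height on $K$, or via a sufficiently generic specialization $K \hookrightarrow \Qbar$ preserving the relation $A^m = w$ together with the order and virtually-unipotent character of $A$, after which one uses the classical Weil height. Expanding $f \circ g$ explicitly gives the composition estimate
\begin{equation*}
h(f \circ g) \leq h(f) + \deg(f) \cdot h(g) + C(\deg(f), \deg(g), m).
\end{equation*}
Combined with the standard bound $\deg(w) \leq D^n$ for $w \in S^n$ (where $D = \max_{s \in S} \deg(s)$), iterating this recursion yields $h(w) \leq K \cdot D^n$ for a constant $K = K(S)$ and every word $w$ of length $n$.

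It then suffices to bound $h(A^m)$ from below. If $A$ is not virtually unipotent, it has an eigenvalue $\lambda$ in an algebraic closure that is not a root of unity, so $h(\lambda) > 0$ and $h(\lambda^m) = m\, h(\lambda)$; since $\lambda^m$ is a root of the characteristic polynomial of $A^m$ whose coefficients are polynomials in the entries of $A^m$, standard Mahler-type inequalities give $h(A^m) \geq c \cdot m$. If $A$ is virtually unipotent, pass to a power so that $A = \Id + N$ with $N \neq 0$ nilpotent; then $A^m = \sum_k \binom{m}{k} N^k$ has some entry of the form $\binom{m}{k} N^k_{ij}$ with $k \geq 1$ and $N^k_{ij} \neq 0$, a nonconstant polynomial in $m$, so $h(A^m) \geq c \log m$ for $m$ large. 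Combining with $h(A^m) \leq K D^n$ yields $m \leq C D^n$ (single exponential) in the first case, and $m \leq \exp(C D^n)$ (doubly exponential) in the second.

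The main technical obstacle is setting up an ambient height theory robust enough to accommodate arbitrary algebraically closed ground fields of characteristic zero, while delivering the subadditivity estimate above; after that the bookkeeping is essentially mechanical. If one proceeds by specialization to $\Qbar$, one must additionally verify that the reduction map can be chosen so that $A$ retains infinite order and its (non-)virtually-unipotent character --- this is automatic on a nonempty Zariski open set of specializations, since these properties are encoded in the characteristic polynomial of $A$.
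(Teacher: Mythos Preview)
Your proposal is correct and follows essentially the same route as the paper: lower bounds from \S\ref{par:disto-autom-Pm}, upper bounds by showing heights grow at most exponentially in word length and then invoking the linear (Kronecker) or logarithmic growth of $h(A^m)$. The paper implements this by first treating $\bfk=\Qbar$ with the classical Weil height (Proposition~\ref{pro:Height-expo-bound}, using Gauss's lemma at finite places, a direct coefficient count at archimedean places, and Gelfond's inequality Theorem~\ref{thm:Gelfond} to control the height after cancelling the common factor in $\hat w$), and then reduces the general case to $\Qbar$ via your second option, specialization at a generic $\Qbar$-point of the variety cut out by the finitely many coefficients.
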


To prove this result, we use basic properties of heights of polynomial functions. 
We start with a 
proof of this theorem when $\bfk=\Qbar$ is an algebraic closure of the field of rational 
number; the general case is obtained by a specialization argument. 

%%%%%%%%%%%%
\subsubsection{Heights of polynomial functions}\label{par:heights-poly}
%%%%%%%%%%%%

Let $\bfK$ be a finite extension of $\Q$, and let $M_\bfK$ be the set of places of $\bfK$; to each place, 
we associate a unique absolute value $\vert \cdot \vert_v$ on $K$, normalized as follows (see \cite{BG}, \S 1.4).
First, for each prime number $p$, the $p$-adic absolute value on $\Q$ satisfies  $\vert p\vert_p=1/p$, and $\vert \cdot \vert_\infty$
 is the standard absolute value. Then, 
if $v\in M_\bfK$ is a place that divides $p$, with $p$ prime or $\infty$, then 
\begin{equation}
\vert x\vert_v=\vert \Norm_{\bfK/\Q}(x) \vert_p^{1/[\bfK:\Q] }
\end{equation}
for every $x\in \bfK$. With such a choice, the product formula reads 
\begin{equation}
\sum_{v\in M_\bfK}\log \vert x\vert_v =0
\end{equation}
for every $x\in \bfK\smallsetminus\{0\}$.

Let $m$ be a natural integer. If $f(\bfx)=\sum_I a_I \bfx^I$ is a polynomial function in the variables 
$\bfx=(x_0, \ldots, x_m)$, 
with $a_I\in \bfK$ for each multi-indice $I=(i_0, \ldots, i_m)$,   we set 
\begin{equation}
\vert f \vert_v = \max_{I} \vert a_I\vert_v
\end{equation}
for every place $v\in M_\bfK$. If $f\neq 0$, we define its {\bf{height}} $h(f)$ by
\begin{equation}
h(f)= \sum_{v\in M_\bfK} \log \vert f\vert_v.
\end{equation}
If $\hat{f}=(f_0, \ldots, f_m)$ is an endomorphism of $\A^{m+1}_\bfK$, the height $h(\hat{f})$ 
is the maximum of the heights $h(f_i)$, and $\vert \hat{f}\vert_v$ is the maximum of the $\vert f_i\vert_v$.
(Note that the affine coordinates system $\bfx$ is implicitly fixed.)  

\begin{rem}\label{rem:heights} Let $f$ and $g$ be non-zero  elements of $\bfK[x_0, \ldots, x_m]$.

(1).-- The product formula implies that $h(af)=h(f)$, $\forall a\in \bfK\smallsetminus\{0\}$. 

(2).-- From this, we see that $h(f)\geq 0$ for all $f\in\bfK[x_0,\ldots,x_m]\smallsetminus\{0\}$. Indeed, one can multiply
$f$ by the inverse of a coefficient $a_I\neq 0$ without changing the value of its height; then, one of the coefficients
is equal to $1$ and $\vert f\vert_v\geq 1$ for all $v\in M_\bfK$. 

(3).-- The Gauss Lemma says that $\vert fg\vert_v=\vert f\vert_v \vert g\vert_v$ when  $v$ is not archimedean.
This multiplicativity property fails for places at infinity. 

(4).-- If ${\mathbf{L}}$ is an extension of $\bfK$, then the height of $f\in \bfK[x_0, \ldots, x_m]$ is the
same as its height as an element of ${\mathbf{L}}[x_0, \ldots, x_m]$ (see \cite{BG}, Lemma 1.3.7).
Thus, the height is well defined on $\Qbar[x_0, \ldots, x_m]$. 
\end{rem}

\begin{thm}[see \cite{BG}, 1.6.13]\label{thm:Gelfond} Let $f_1$, $\ldots$, $f_s$ be non-zero elements of $\Qbar[x_0, \ldots, x_m]$, 
and let $f$ be their product $f_1\cdots f_s$. Let $\Delta(f)$ be the sum of the partial degrees of $f$ with respect
to each of the variables $x_i$. Then 
\[
-\Delta(f) \log(2) +\sum_{i=1}^sh(f_i) \; \leq \; h(f) \; \leq\; \Delta(f) \log(2) +\sum_{i=1}^sh(f_i).
\] 
\end{thm}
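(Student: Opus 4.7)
The plan is to split the defining sum $h(f) = \sum_{v\in M_\bfK}\log|f|_v$ into its non-archimedean and archimedean contributions. The central observation is that the non-archimedean places contribute \emph{exactly} additively on products, so the entire error $\Delta(f)\log 2$ has to be produced at the archimedean places alone. One may also reduce, via Remark~\ref{rem:heights}(4), to the case where all coefficients lie in a single number field $\bfK$.

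For a non-archimedean place $v$, Gauss's lemma (part~(3) of Remark~\ref{rem:heights}) gives $|f_1\cdots f_s|_v=\prod_i|f_i|_v$; summing $\log$ over all such $v$ shows that the non-archimedean part of $h(f)$ equals the sum of the non-archimedean parts of the $h(f_i)$, with no error. For an archimedean place $v$, coming from an embedding $\sigma_v\colon \bfK\hookrightarrow\C$, I would introduce the Mahler measure
\[
M_v(g)=c_v\int_{[0,1]^{m+1}}\log\bigl|\sigma_v(g)(e^{2\pi i t_0},\dots,e^{2\pi i t_m})\bigr|\,dt_0\cdots dt_m ,
\]
where $c_v>0$ is the normalization making $M_v$ compatible with $|\cdot|_v$ (so that $\sum_{v\mid\infty}c_v=1$). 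By Jensen's formula, iterated in each variable, $M_v$ is \emph{exactly} additive on products: $M_v(f_1\cdots f_s)=\sum_i M_v(f_i)$.

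Thus everything reduces to the local comparison
\[
\bigl|M_v(g)-\log|g|_v\bigr|\leq c_v\Delta(g)\log 2
\]
for every nonzero $g\in\C[x_0,\ldots,x_m]$ and every archimedean $v$. Summing this local inequality over the archimedean places (where the $c_v$ sum to $1$) and combining with the exact additivity at non-archimedean places produces the announced two-sided bound $\pm\Delta(f)\log 2$.

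The local comparison is the one nontrivial step, and the place I would expect to spend the most effort. The upper bound $M_v(g)\leq\log|g|_v+c_v\Delta(g)\log 2$ is the easy direction: on the unit polytorus the triangle inequality yields $|g|\leq \sum_I|a_I|\leq \prod_i(d_i+1)\cdot|g|_v\leq 2^{\Delta(g)}|g|_v$, so that $\log|g|$ is bounded on the torus by $\Delta(g)\log 2+\log|g|_v$, and integrating gives the required estimate. The reverse bound $\log|g|_v\leq M_v(g)+c_v\Delta(g)\log 2$ is the content of the classical Mahler inequality $|a_k|\leq\binom{d}{k}M(g)$ for a one-variable polynomial $g(z)=\sum_{k\leq d}a_kz^k$, which follows from the factorization $g=a_d\prod_j(z-\alpha_j)$ together with Jensen's formula applied to each root. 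Iterating this bound variable by variable across the $m+1$ coordinates, and using $\binom{d}{k}\leq 2^d$, converts it into the multivariable estimate $|g|_v\leq 2^{\Delta(g)}M_v(g)$ (up to the normalization $c_v$). The main obstacle is thus the careful iterated application of Jensen, together with the bookkeeping needed to align the archimedean normalizations with the product formula used in the definition of $h$.
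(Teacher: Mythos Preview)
The paper does not prove Theorem~\ref{thm:Gelfond}; it is quoted from Bombieri--Gubler \cite{BG}, 1.6.13, and used as a black box. Your outline is precisely the standard argument, and in fact the one given in \cite{BG}: exact multiplicativity at the finite places via Gauss's lemma, passage to the Mahler measure at the archimedean places to restore multiplicativity there, and control of the discrepancy $\lvert\log L(g)-m(g)\rvert$ by iterating the one-variable Mahler inequality across the variables. So there is no ``paper's proof'' to compare against, and your sketch is correct.

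One caveat worth flagging, since you already identified the bookkeeping as the delicate point. If you apply the two-sided local comparison $\lvert M_v(g)-\log\lvert g\rvert_v\rvert\le c_v\,\Delta(g)\log 2$ once to $f$ and once to each $f_i$ and then combine by the triangle inequality, the error you obtain is $2\Delta(f)\log 2$, not $\Delta(f)\log 2$: you pay $\Delta(f)\log 2$ for $f$ and another $\sum_i\Delta(f_i)\log 2=\Delta(f)\log 2$ for the factors. To reach the constant as stated one treats the two inequalities asymmetrically. The upper bound $h(f)\le\sum_i h(f_i)+\Delta(f)\log 2$ follows directly by counting the monomials that can contribute to a fixed coefficient of the product (no Mahler measure is needed). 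For the lower bound one applies Mahler's inequality $L(f_i)\le 2^{\Delta(f_i)}M(f_i)$ only to the factors, uses the additivity $\sum_i m(f_i)=m(f)$, and then bounds $M(f)$ above by an $L^2$-norm via Parseval rather than by the cruder $2^{\Delta(f)}L(f)$; this keeps the second error term subordinate. None of this matters for the paper's application: in Proposition~\ref{pro:Height-expo-bound} any bound of the shape $h(f_1)\le h(f)+C\deg(f)$ would do equally well.
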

If $\deg(f)$ denotes the degree of $f$, then $\Delta(f)\leq (m+1)\deg(f)$. For $s=2$ we get 
\begin{equation}
 h(f_1) \; \leq \; h(f) - h(f_2) +(m+1)\log(2) \deg(f).
\end{equation}

%%%%%%%%%%%%
\subsubsection{Heights of birational transformations}\label{par:Heights-Bir}
%%%%%%%%%%%%
Consider a birational transformation $f\colon \P^m_\Qbar\dasharrow \P^m_\Qbar$, and write it in homogeneous coordinates
\begin{equation}
f[x_0:\ldots :x_m]=[f_0:\ldots : f_m]
\end{equation}
where the $f_i\in \Qbar[x_0, \ldots, x_m]$ are homogeneous polynomial functions of the same degree $d$
with no common factor of positive degree. Then, $d$ is the degree of $f$ (see \S~\ref{par:degrees}), and the $f_i$ are uniquely determined modulo 
multiplication by a common constant $a\in \Qbar\setminus\{0 \}$. Thus, Remark~\ref{rem:heights}(1) shows that the real 
number 
\begin{equation}
h(f)=\max_{i} h(f_i)
\end{equation}
is well defined. This number $h(f)$ is, by definition, the {\bf{height}} of the birational transformation $f$. 
It coincides with the height of the lift of $f$ as the endomorphism $\hat{f}=(f_0, \ldots, f_{m+1})$ of $\A^{m+1}_\bfk$ (see \S~\ref{par:heights-poly}).

%%%%%%%%%%%%
\subsubsection{Growth of heights under composition}\label{par:Heights-growth1}
%%%%%%%%%%%%

Let $S=\{ f^1, \ldots, f^s\}$ be a finite symmetric set of birational transformations of $\P^m_\Qbar$; the symmetry means
that $f\in S$ if and only if $f^{-1}\in S$. Consider the homomorphism from the 
free group $\F_s=\langle a_1, \ldots, a_s\vert \emptyset\rangle$ to $\Bir(\P^m_\Qbar)$ defined by mapping each generator $a_j$ to $f^j$.
Then, to every reduced word $w_\ell(a_1, \ldots, a_s)$ of length $\ell$ in the generators $a_j$ corresponds an element 
\begin{equation}
w_l(S)=w_l(f^1, \ldots, f^s)
\end{equation}
of the Cremona group $\Bir(\P^m_\Qbar)$. 

For each $f^i\in S$, we fix a system of homogeneous polynomials $f^i_j\in \Qbar[x_0:\ldots :x_m]$ defining $f$, as in \S~\ref{par:Heights-Bir}: 
$f^i=[f^i_0:\ldots: f^i_m]$ and the $f^i_j$ have degree $d_i=\deg(f^i)$. Moreover, we choose the $f^i_j$ so that for every $i$ at least one
of the coefficients of the $f^i_j$ is equal to $1$. Once the $f^i_j$ have been fixed, we have a canonical lift of 
each $f^i$ to a homogeneous endomorphism $\hat{f^i}$ of $\A^{m+1}_\Qbar$, given by 
\begin{equation}
\hat{f^i}(x_0, \ldots, x_m)=(f^i_0, \ldots, f^i_m).
\end{equation}
Thus, every reduced word $w_\ell$ of length $\ell$ in $\F_s$ determines also an endomorphism 
$\hat{w_\ell}(S)= w_\ell(\hat{f^1}, \ldots, \hat{f^s})$ of the affine space. 

Let $d_S$ be the maximum of $\{2, d_1, \ldots, d_s\}$, so that $d_S\geq 2$. Then, the degree of the endomorphism $\hat{w_\ell}(S)$ 
is at most $d_S^\ell$. 

Let $\bfK$ be the finite extension of $\Q$ which is generated by all the coefficients $a^i_{j,I}$ of the polynomial functions $f^i_j=\sum a^i_{j, I}\bfx^I$. 
We shall say that a place $v\in M_\bfK$ is {\bf{active}} if $\vert a^i _{j, I}\vert_v>1$ for at least one of these
coefficients; the set of active places is finite, because there are only finitely many coefficients. 
For each place $v\in M_\bfK$, we set 
\begin{equation}
M(v)= \max{\vert a^i_{j,I}\vert_v}= \max \vert \hat{f^i}\vert_v,
\end{equation}
the maximum of the absolute values of the coefficients; our normalization implies that $M(v)\geq 1$ and $M(v)=1$ if and only if $v$ is not active. 

\begin{lem}\label{lem:growth-height1}
Let $v$ be a non-archimedean place. If $w_\ell\in \F_s$ is a reduced word of length $\ell$, then 
\[
\log \vert \hat{w_\ell}(S)\vert_v \leq \log(M(v)) d_S^{\ell}.
\]
Thus, if $v$ is not active, then $\log\vert \hat{w_\ell}(S)\vert_v=0$.
\end{lem}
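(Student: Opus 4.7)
The plan is to argue by induction on the word length $\ell$, leveraging the two features that make non-archimedean places especially pleasant: the Gauss lemma (multiplicativity of $\vert \cdot \vert_v$ on a product, Remark~\ref{rem:heights}(3)) and the ultrametric inequality for sums. The base case $\ell=0$ is trivial: $\hat{w}_0 = \mathrm{id}$ has $\vert \hat{w}_0 \vert_v = 1$, so $\log \vert \hat{w}_0 \vert_v = 0$.

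For the inductive step, I factor $\hat{w}_\ell = \hat{f^i} \circ \hat{w}_{\ell-1}$, where $\hat{f^i}$ is one of the generators, of degree $d_i \leq d_S$. The key ingredient is the non-archimedean substitution estimate: for any homogeneous $P \in \bfK[y_0,\dots,y_m]$ of degree $d$ and any $Q_0, \dots, Q_m \in \bfK[x_0,\dots,x_m]$,
\[
\vert P(Q_0,\dots,Q_m) \vert_v \;\leq\; \vert P \vert_v \cdot \max_j \vert Q_j \vert_v^{\,d},
\]
which follows by expanding $P = \sum_{\vert I \vert = d} a_I\, y^I$, applying $\vert Q^I \vert_v = \prod_j \vert Q_j \vert_v^{\,i_j} \leq (\max_j \vert Q_j \vert_v)^d$ monomial by monomial, and bounding the sum by its ultrametric maximum. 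Applied componentwise this yields the one-step bound
\[
\vert \hat{w}_\ell \vert_v \;\leq\; M(v) \cdot \max\bigl(1,\, \vert \hat{w}_{\ell-1} \vert_v\bigr)^{d_S}.
\]

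To iterate without friction I set $L_k := \max\bigl(0,\, \log \vert \hat{w}_k \vert_v\bigr)$. Since $\log M(v) \geq 0$, the previous inequality rewrites as $L_\ell \leq \log M(v) + d_S\, L_{\ell-1}$, and unrolling with $L_0 = 0$ gives
\[
L_\ell \;\leq\; \log M(v)\,\bigl(1 + d_S + d_S^2 + \cdots + d_S^{\ell - 1}\bigr) \;\leq\; \log M(v) \cdot d_S^\ell,
\]
the final step using $d_S \geq 2$ (this is precisely why the definition of $d_S$ includes a floor of $2$). Since $\log \vert \hat{w}_\ell \vert_v \leq L_\ell$, the displayed estimate of the lemma follows.

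The second assertion is then immediate: at a non-active place, $M(v) = 1$ by definition, so the right-hand side of the bound vanishes. The only mildly delicate point to watch is the possibility that $\vert \hat{w}_k \vert_v < 1$ for some intermediate $k$, which would stall a naive recursion $\log \vert \hat{w}_\ell \vert_v \leq \log M(v) + d_S \log \vert \hat{w}_{\ell-1} \vert_v$; passing to $L_k$ (or equivalently inserting $\max(1, \vert \hat{w}_{\ell-1} \vert_v)$ directly in the substitution bound) absorbs this case cleanly without losing the $d_S^\ell$ form of the estimate.
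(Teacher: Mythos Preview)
Your argument is correct and follows essentially the same route as the paper: an induction on the word length using the Gauss lemma and the ultrametric inequality to bound $\vert P(Q_0,\dots,Q_m)\vert_v$, then summing the geometric series $1+d_S+\cdots+d_S^{\ell-1}\le d_S^\ell$. The only cosmetic difference is that the paper carries the bound multiplicatively as $M(v)^{1+d_S+\cdots+d_S^{k-1}}$ (where $M(v)\ge 1$ absorbs the issue you handle via $\max(1,\cdot)$), whereas you pass to logarithms and track $L_k=\max(0,\log\vert\hat{w}_k\vert_v)$; both are equivalent.
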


\begin{proof}
Set $d=d_S$. Write $\hat{w_\ell}(S)$ as a composition $\hat{g}^\ell\circ \cdots \circ \hat{g}^1$, where each $\hat{g}^k$ is one of the $\hat{f}^i$
(here we use that $S$ is symmetric). By definition, $\vert \hat{g}^1 \vert_v\leq M(v)$. Then, assume that $\vert \hat{g}^{k-1}\circ \cdots \circ \hat{g}^1\vert_v\leq M(v)^{1+d+\cdots + d^{k-2}}$ for some integer $2\leq k\leq \ell$. Write $\hat{g}^{k-1}\circ \cdots \circ \hat{g}^1=(u_0, \ldots, u_m)$ for some
homogeneous polynomials $u_j$.   The Gauss lemma (see Remark~\ref{rem:heights}) says that
\begin{equation}
\vert u_0^{i_0} \cdots u_m^{i_m}\vert_v=\vert u_0\vert_v^{i_m} \cdots \vert u_m\vert_v^{i_0}\leq (M(v)^{1+d+\cdots + d^{k-2}})^d
\end{equation}
for every multi-index $I=(i_0, \ldots, i_m)$ of length $\sum i_j\leq d$.
The endomorphism $\hat{g}^k$ has degree $\leq d$, and the absolute values of its coefficients
are bounded by $M(v)$, hence
\begin{equation}
\vert \hat{g}^{k}\circ \cdots \circ \hat{g}^1\vert_v\leq M(v)^{1+d+\cdots + d^{k-1}}.
\end{equation}
By recursion, this upper bound holds up to $k= \ell$. For $k=\ell$ we obtain the estimate $\log \vert \hat{w_\ell}(S)\vert_v \leq \log(M(v)) d^{\ell}$
because $1+d+\cdots +d^{\ell-1}\leq d^\ell$.
\end{proof}

\begin{lem}\label{lem:growth-height2}
Let $v$ be an archimedean place. If $w_\ell\in \F_s$ is a reduced word of length $\ell$, then 
\[
\log \vert \hat{w_\ell}(S)\vert_v \leq  2\log(M(v)) d_S^\ell + \log(md_S^m) d_S^{2\ell}.
\]
\end{lem}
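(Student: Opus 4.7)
My plan is to mimic the inductive scheme of Lemma~\ref{lem:growth-height1}, replacing Gauss's lemma (which fails at archimedean places) by an archimedean submultiplicativity estimate. The key local inequality is that for homogeneous polynomials $p,q\in\bfK[x_0,\ldots,x_m]$ of degree $\leq D$ one has
\[
|pq|_v \;\leq\; \binom{D+m}{m}\,|p|_v\,|q|_v,
\]
because each coefficient of $pq$ is a sum of at most $\binom{D+m}{m}$ products $a_Ib_J$ with $|a_I|_v\leq|p|_v$ and $|b_J|_v\leq|q|_v$.

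Writing $\hat{w_\ell}(S)=\hat{g^\ell}\circ\cdots\circ\hat{g^1}$ and setting $N_k:=|\hat{g^k}\circ\cdots\circ\hat{g^1}|_v$, the components of this partial composition are homogeneous polynomials of degree at most $d_S^k$. Writing the $j$-th component of $\hat{g^{k+1}}$ as $\sum_I c_I\,x^I$ (with $|I|\leq d_S$ and $|c_I|_v\leq M(v)$), the $j$-th component of the next composition is $\sum_I c_I\,u^I$, where $u^I$ is a product of at most $d_S$ of the preceding components, each of degree $\leq d_S^k$ and coefficient $v$-norm $\leq N_k$. Iterating the submultiplicativity $d_S-1$ times along the factors of $u^I$, with intermediate total degree controlled by $d_S^{k+1}$, yields
\[
|u^I|_v \;\leq\; \binom{d_S^{k+1}+m}{m}^{d_S}\,N_k^{d_S}.
\]
Summing over the $\leq\binom{d_S+m}{m}$ admissible monomial indices $I$ then gives the one-step recursion
\[
N_{k+1} \;\leq\; M(v)\,\binom{d_S+m}{m}\binom{d_S^{k+1}+m}{m}^{d_S}\,N_k^{d_S}.
\]

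Iterating from $N_0=1$ and taking logarithms produces
\[
\log N_\ell \;\leq\; (1+d_S+\cdots+d_S^{\ell-1})\log M(v) \;+\; \sum_{k=0}^{\ell-1} d_S^{\ell-1-k}\log B_k,
\]
with $B_k=\binom{d_S+m}{m}\binom{d_S^{k+1}+m}{m}^{d_S}$. The geometric coefficient is bounded by $d_S^\ell$, so the first term fits into $2d_S^\ell\log M(v)$. For the second, the crude estimate $\binom{a+m}{m}\leq(a+m)^m\leq(2am)^m$ for $a,m\geq 1$ gives $\log B_k=O(md_S\,k\log(md_S))$, whence $\sum d_S^{\ell-1-k}\log B_k=O(md_S^{\ell+1}\log(md_S))$; this is easily absorbed into $\log(md_S^m)\,d_S^{2\ell}$. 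The main technical step is the combinatorial bookkeeping of the factors $B_k$, but the stated bound is very generous---the exponent $d_S^{2\ell}$ vastly overpowers the actual $O(d_S^\ell)$ growth of the error term---so the argument closes with crude constants.
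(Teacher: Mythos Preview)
Your argument is essentially the paper's: both set up a recursion for $N_k=|\hat g^k\circ\cdots\circ\hat g^1|_v$ by bounding the number of coefficient-products created when substituting the components $u_j$ into a monomial $x^I$ of degree $\le d_S$ --- the paper does this in a single stroke via $|u^I|_v\le\binom{D+m}{m}^{d_S}N_{k-1}^{d_S}$ with $D=d_S^{k-1}$, whereas you iterate a two-factor submultiplicativity, which is equivalent --- and then iterate the recursion and swallow the combinatorial error using the wasteful inequality $\ell\, d_S^{\ell}\le d_S^{2\ell}$. Two cosmetic points to clean up: passing from $N_k^{|I|}$ to $N_k^{d_S}$ tacitly uses $N_k\ge 1$ (replace $N_k$ by $\max(N_k,1)$ in the recursion), and your ``easily absorbed'' step is not literal for very small $\ell$, so either track the constant more carefully or note that the claimed bound holds trivially for bounded $\ell$.
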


\begin{proof}
Consider a monomial $\bfx^I= x_0^{i_0}\cdots x_m^{i_m}$ of degree $\leq d$. Let $u_0$, $\ldots$, $u_m$
be homogeneous polynomials of degree $\leq D$ with $D\geq 2$ and with all coefficients satisfying $\vert c\vert_v\leq C$. Note that
the space of homogeneous polynomials of degree $D$ in $m$ variables has dimension $\binom{D+m}{m}$.Then
\begin{equation}
\vert u_0^{i_0} \cdots u_m^{i_m}\vert_v\leq \binom{D+m}{m}^dC^d\leq (D+m)^{md} C^d \leq (mD^m)^dC^d
\end{equation}
Indeed, every coefficient in the product $u_0^{i_0} \cdots u_m^{i_m}$ is obtained as a sum of at most $\binom{D+m}{m}^d$
terms, each of which is a product of at most $d$ coefficients of the $u_j$.  

Then, to estimate the absolute values of the coefficients of $\hat{w_\ell}(S)$, we proceed by recursion as
in the proof of Lemma~\ref{lem:growth-height1}. Set $B=md_S^m$.  
For a composition $\hat{g}^k\circ \cdots \circ \hat{g}_1$ of length $k$ we obtain 
\begin{equation}
 \vert \hat{g}^k\circ \cdots \circ \hat{g}^1\vert_v\leq B^{(k-1)d_S^{k-1}} M(v)^{2d_S^{k-1}}.
\end{equation}
The conclusion follows from $\ell d_S^\ell \leq d_S^{2\ell}$. 
\end{proof}

Putting these lemmas together, we get 
\begin{equation}
h(\hat{w_\ell}(S))\leq \sum_{v \; \text{active}} 2\log(M(v)) d_S^{\ell} +\sum_{v\vert \infty} \log(md_S^m) d_S^{2\ell}
\end{equation}
This inequality concerns the height of the endomorphism $\hat{w_\ell}(S)$; to obtain the 
birational transformation $w_\ell(S)$, we might need to divide by a common factor $q(x_0, \ldots, x_m)$. Since the 
degree of $\hat{w_\ell}(S)$ is no more than $d_S^\ell$, Theorem~\ref{thm:Gelfond} provides the upper bound
\begin{equation}
h(w_\ell(S))\leq  \left( (m+1) \log(2)+\sum_{v \; \text{active}}  \log(M(v))+\sum_{v\vert \infty} \log(md_S^m) \right) d_S^{2\ell}.
\end{equation}
This proves the following proposition. 

\begin{pro}\label{pro:Height-expo-bound}
Let  $\F_s=\langle a_1, \ldots, a_s\vert \emptyset\rangle$ be a free group of rank  $s\geq 1$.
For every  homomorphism $\rho\colon\F_s\to \Bir(\P^m_\Qbar)$,  there exist two constants $C_m(\rho)$ and $d(\rho)\geq 1$ such that 
$h(\rho(w))\leq C_m(\rho) d(\rho)^{\vert w\vert}$ for every $w\in \F_s$, where $\vert w\vert$ is the length of $w$ as a reduced word in 
the generators $a_i$. 
\end{pro}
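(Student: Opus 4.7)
The plan is to fix a finite symmetric generating set $S = \{\rho(a_1)^{\pm 1}, \ldots, \rho(a_s)^{\pm 1}\}$, pick compatible lifts of its elements to homogeneous endomorphisms of $\A^{m+1}_\Qbar$, and then control $h(\rho(w))$ by a place-by-place inductive estimate on the lifts. The rough shape is: the lift of a length-$\ell$ word has degree at most $d_S^\ell$ where $d_S=\max\{2,\deg \rho(a_i)\}$, and its local norm at each place $v$ grows at worst like $M(v)^{d_S^\ell}$ multiplied by some combinatorial factor, with $M(v)=\max$ of the absolute values of the coefficients of the generators at $v$.

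First, I would normalize the lifts so that at least one coefficient equals $1$; this forces $M(v)\geq 1$, makes $M(v)=1$ for all but finitely many (active) places, and lets us separate archimedean and non-archimedean cases cleanly. The non-archimedean case is where the Gauss lemma applies: multiplicativity $|fg|_v=|f|_v|g|_v$ gives a clean induction. Substituting the lift of length $k-1$ (with bound $M(v)^{1+d_S+\cdots+d_S^{k-2}}$) into a single generator (whose coefficients have $|\cdot|_v\leq M(v)$ and whose degree is at most $d_S$) yields the bound $M(v)^{1+d_S+\cdots+d_S^{k-1}}$. Summing the geometric series gives $M(v)^{d_S^\ell}$ at the end, and $0$ whenever $v$ is inactive.

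The archimedean case is the main obstacle. There the Gauss lemma fails: when we expand the composition of a generator of degree $\leq d_S$ with $(m+1)$-tuples of homogeneous polynomials of degree $\leq D$, each coefficient of the result is a sum of $\binom{D+m}{m}^{d_S}\leq (mD^m)^{d_S}$ products, each a product of at most $d_S$ coefficients bounded by the prior estimate. Setting $B=md_S^m$, an induction then gives a bound of the shape $|\hat{g}^{k}\circ\cdots\circ\hat{g}^1|_v\leq B^{(k-1)d_S^{k-1}}M(v)^{2d_S^{k-1}}$. Using the crude inequality $\ell d_S^\ell\leq d_S^{2\ell}$, both factors get absorbed into a single exponential $d_S^{2\ell}$.

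Combining the two cases and summing over the finitely many active and archimedean places produces a bound of the form $h(\hat{w}(S))\leq C\,d_S^{2\ell}$ on the height of the \emph{endomorphism} lift. The last step is to pass from $\hat{w}(S)$ to the actual birational map $w(S)$, which may require dividing out a common polynomial factor $q$ of all components. Here I would invoke the Gelfond-type inequality (Theorem~\ref{thm:Gelfond}): writing $\hat{w}(S)=q\cdot w(S)$ componentwise, each component $\hat{w}_j$ has height dominated by $h(w_j)+h(q)-(m+1)\log(2)\deg(\hat{w}_j)$, so $h(w_j)\leq h(\hat{w}_j)+(m+1)\log(2)\deg(\hat{w}_j)$. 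Since $\deg(\hat{w}(S))\leq d_S^\ell\leq d_S^{2\ell}$, this correction is absorbed into the same exponential. Taking $d(\rho)=d_S^2$ and $C_m(\rho)$ the corresponding constant finishes the proof. The only subtle points are (i) making sure the combinatorial archimedean factor actually fits inside the same $d_S^{2\ell}$ exponential (which is why $d_S\geq 2$ is imposed), and (ii) verifying that Gelfond's bound really does dominate the loss from removing common factors, which is a direct application of Theorem~\ref{thm:Gelfond} to the factorization $\hat{w}_j = q\cdot w_j$.
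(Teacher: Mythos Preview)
Your proposal is correct and follows essentially the same route as the paper: the same normalization of lifts, the same place-by-place induction (Gauss lemma at non-archimedean places, the combinatorial factor $B=md_S^m$ at archimedean ones with the same crude bound $\ell d_S^\ell\le d_S^{2\ell}$), and the same final appeal to Theorem~\ref{thm:Gelfond} together with $h(q)\ge 0$ to strip the common factor. There is no substantive difference.
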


%%%%%%%%%%%%%%%%%%%%%%%%%%%%%%%%%%%
\subsection{Proof of Theorem~\ref{thm:dbl-expo}}
%%%%%%%%%%%%%%%%%%%%%%%%%%%%%%%%%%%

We may now prove Theorem~\ref{thm:dbl-expo}. When $\bfk=\Qbar$, this result is a direct
corollary of Proposition~\ref{pro:Height-expo-bound} and Section~\ref{par:disto-autom-Pm}; we start with this case and then treat the general case
via a specialization argument.

%%%%%%%%%%%%%%%%%%%%%%%%%%%%%%%%%%%
\subsubsection{Number fields}
%%%%%%%%%%%%%%%%%%%%%%%%%%%%%%%%%%%
Let $A$ be an element of $\SL_{m+1}(\Qbar)$ of infinite order. 
After conjugation, we may assume $A$ to be upper triangular. 
First, suppose that  $A$ is virtually unipotent (all its eigenvalues are roots of unity). Then $h(A^n)$ grows 
like $\tau \log(n)$ as $n$ goes to $+\infty$. Thus, if $A^n$ is a word of length 
$\ell(n)$ in some fixed, finitely generated subgroup of $\Bir(\P^m_\Qbar)$, Proposition~\ref{pro:Height-expo-bound}
shows that 
\begin{equation}
\tau\log(n)\le Cd^{\ell(n)}
%\tau \log(n) \leq C d^{\ell(n)}
\end{equation}
for some positive constants $C$ and $d> 1$. Thus, $A$ is at most doubly exponentially distorted; from 
Lemma~\ref{lem:unip-distortion}, it is exactly doubly exponentially distorted.
Now, suppose that an eigenvalue $\alpha$ of $A$ is not a root of unity. Kronecker's lemma provides
a place $v\in M_{\Q(\alpha)}$ for which $\vert \alpha \vert_v>1$ (see~\cite{BG}, Thm. 1.5.9). Thus, $h(A^n)$ grows like $\tau n$
for some positive constant $\tau$ as $n$ goes to $+\infty$ (see Remark~\ref{rem:heights}(2)), and $A$ is at most exponentially distorted
in $\Bir(\P^m_\Qbar)$. From Section~\ref{par:disto-autom-Pm}, we obtain Theorem~\ref{thm:dbl-expo} when $\bfk= \Qbar$.

%%%%%%%%%%%%%%%%%%%%%%%%%%%%%%%%%%%
\subsubsection{Fields of characteristic zero}
%%%%%%%%%%%%%%%%%%%%%%%%%%%%%%%%%%%

Let $\bfk$ be an algebraically closed field of characteristic zero and let $A$ be an element of $\SL_{m+1}(\bfk)$. 
Let $S=\{ f^1, \ldots, f^m\}$ be a finite symmetric subset of $\Bir(\P^m_\bfk)$ such that the group generated by $S$ 
contains $A$. For each $n$, denote by $\ell(n)$ the length of $A^n$ as a reduced word in the $f^i$. 

Write each $f^i$ in homogeneous coordinates $f^i=[f^i_0:\ldots : f^i_m]$, as in Section~\ref{par:Heights-growth1}; and
denote by ${\mathcal{C}}$ the set of  coefficients of the matrix $A$ and of the polynomial functions $f^i_j=\sum a^i_{j, I}\bfx^I$. This is a finite subset
of $\bfk$, generating a finite extension $\bfK$ of $\Q$. This finite extension is an algebraic extension of a purely 
trancendental extension $\Q(t_1, \ldots, t_r)$, where $r$ is the transcendental degree of $\bfK$ over $\Q$. 
Then, the elements of ${\mathcal{C}}$ are algebraic functions with coefficients in $\Qbar$ (such as $(2t_1t_3^2-1)^{1/3}+t_2^5$);
the ring of functions generated by ${\mathcal{C}}$ (over $\Qbar$) may be viewed as the ring of functions of some algebraic variety $V_{\mathcal{C}}$
(defined over $\Qbar$).

If $u$ is a point of $V_{\mathcal{C}}(\Qbar)$ and $c\in {\mathcal{C}}$ is one of the coefficients, we may evaluate
$c$ at $u$ to obtain an algebraic number $c(u)$. Similarly, we may evaluate, or specialize, $A$ and the $f^i$ at $u$.
This gives an element $A_u$ in $\SL_{m+1}(\Qbar)$ (the determinant is $1$), and rational transformations $f^i_u$ of $\P^m_{\Qbar}$.
For some values of $u$, $f^i_u$ may be degenerate, identically equal to $[0:\ldots : 0]$; but for $u$ in a dense, Zariski open subset
of $V_{\mathcal{C}}$, the $f^i_u$ are birational transformations of degree $\deg(f^i_u)=\deg(f^i)$. Pick such a point 
$u\in V_{\mathcal{C}}(\Qbar)$. If $A^n$ is a word of length $\ell(n)$ in the $f^i$, then $A^n_u$ is a word of the
same length in the $f^i_u$. From the previous section we deduce that $A$ is at most doubly exponentially distorted. 
Moreover, if one of the eigenvalues $\alpha \in \bfk$ of $A$ is not a root of unity, we may add $\alpha$ to the set
${\mathcal{C}}$ and then choose the point $u$
such that $\alpha(u)$ is not a root of unity either. Then, $A_u$ and thus $A$ is at most exponentially distorted.  This concludes 
the proof of Theorem~\ref{thm:dbl-expo}.

%%%%%%%%%%%%%%%%%%%%%%%%%%%%%%%%%%%
\section{Non-distortion}\label{prundi}
%%%%%%%%%%%%%%%%%%%%%%%%%%%%%%%%%%%

In this section, we prove Theorem~\ref{thb}, which provides an upper
bound for the distortion of parabolic isometries in certain groups of isometries of hyperbolic
spaces.

%%%
\subsection{Hyperbolic spaces and parabolic isometries}\label{par:Hyperb-space}
%%%
\subsubsection{Hyperbolic spaces} Let ${\mathcal{H}}$ be a real Hilbert space of dimension $m+1$ ($m$ can be infinite). 
Fix a unit vector $\bfe_0$ of ${\mathcal{H}}$ and a Hilbert basis $(\bfe_i)_{i\in I}$ of the orthogonal complement of $\bfe_0$. 
Define a new scalar product on ${\mathcal{H}}$ by 
\begin{equation}
\langle u \vert u' \rangle = a_0a'_0-\sum_{i\in I} a_i a'_i
\end{equation}
for every pair $u=a_0\bfe_0+\sum_i a_i \bfe_i$, $u'= a'_0\bfe_0+\sum_i a'_i\bfe_i$ of vectors. 
Define $\HH_{m}$ to be the connected component of the hyperboloid 
$\{u\in {\mathcal{H}}\vert \; \langle u \vert u \rangle=1\}$
that contains $\bfe_0$, and let $\dist$ be the distance
on $\HH_m$ defined by (see \cite{Benedetti-Petronio})
\begin{equation}
\cosh( \dist(u, u') ) = \langle u \vert u'\rangle.
\end{equation}
The metric space $(\HH_m, \dist)$ is a model of the hyperbolic space of dimension $m$ (see \cite{Benedetti-Petronio}). 
The projection of $\HH_m$ into the projective space $\P(\mathcal{H})$ is one-to-one onto its image. 
 In what follows, $\HH_m$ is identified with its image in $\P(\mathcal{H})$ and its boundary is denoted by
$\partial \HH_m$; hence, boundary points correspond to isotropic lines in the space $\mathcal{H}$ for the scalar product $\langle \cdot \vert \cdot\rangle$.

\subsubsection{Hyperbolic plane} A useful model for $\HH_2$ is the Poincar\'e model: $\HH_2$ is identified to the upper
half-plane $\{z\in\C;\Ima(z)>0\}$, with its Riemannian metric given by $ds^2=(x^2+y^2)/y^2$. Its group of orientation preserving isometries
coincides with $\PSL_2(\R)$, acting by linear fractional transformations. The distance between two points $z_1$ and $z_2$ satisfies
\begin{equation}\label{eq:dist-h2}
\sinh \left( \frac{1}{2} \dist_{\HH_2}(z_1,z_2)\right) = \frac{\vert z_1-z_2\vert }{2(\Ima(z_1)\Ima(z_2))^{1/2}}.
\end{equation}

\subsubsection{Isometries} 
 Denote by $\sfO_{1,m}(\R)$ the group of linear transformations of $\mathcal{H}$ 
preserving the scalar product $\langle \cdot \vert \cdot \rangle$. The group of isometries $\Isom(\HH_m)$
 coincides with the index $2$ subgroup $\sfO^+_{1,m}(\R)$ of $\sfO({\mathcal{H}})$
 that preserves the chosen sheet $\HH_m$ of the hyperboloid $\{u\in {\mathcal{H}}\, \vert \, \langle u \vert u \rangle=1\}$. 
 This group acts transitively on $\HH_m$, and on its unit tangent bundle.

If $h\in \sfO^+_{1,m}(\R)$ is an isometry of $\HH_m$ and $v\in {\mathcal{H}}$ is an eigenvector of $h$ with eigenvalue 
$\lambda$, then either $\vert \lambda \vert =1$ or
$v$ is isotropic. Moreover, since $\HH_m$ is homeomorphic to a ball, $h$ has at least one eigenvector $v$ in $\HH_m\cup \partial \HH_m$.
Thus, there are three types of isometries~\cite{Burger-Iozzi-Monod}: 

\vspace{0.1cm}
\noindent {$\;$(1)$\;$} An isometry $h$ is {\bf{elliptic}} if and only if it fixes a point $u$ in $\HH_m$. Since $\langle \cdot \vert \cdot  \rangle$ is negative
definite on  the orthogonal complement $u^\perp$, the linear transformation $h$
fixes pointwise the line $\R u$  and acts by rotation on $u^\perp$ with respect to~$\langle \cdot \vert \cdot  \rangle$.

\vspace{0.1cm}
\noindent {$\;$(2)$\;$}  An isometry $h$ is {\bf{parabolic}} if it is not elliptic and fixes a vector $v$ in the isotropic cone. The line $\R v$ is uniquely determined 
by the parabolic isometry~$h$. If $z$ is a point of $\HH_m$, there is an increasing sequence of integers $m_i$ such that $h^{m_i}(z)$ converges
towards the boundary point $\xi$ determined by $v$.

\vspace{0.1cm}
\noindent {$\;$(3)$\;$} An isometry $h$ is {\bf{loxodromic}} if and only if $h$ has an eigenvector $v^+_h$ with eigenvalue $\lambda>1$. Such an eigenvector is unique up to scalar multiplication, and there is another, unique,  isotropic eigenline $\R v^-_h$ corresponding to an eigenvalue $<1$; this eigenvalue is equal to $1/\lambda$.  On the orthogonal complement of $\R v^+_h\oplus \R v^-_h$, $h$ acts as a rotation with 
respect to $\langle  \cdot \vert \cdot  \rangle$. 
The boundary points determined by $v^+_h$ and $v^-_h$ are the two fixed points of $h$ in $\HH_\infty\cup \partial\HH_\infty$: 
the first one is an attracting fixed point, the second is repelling. 
Moreover,  $h\in \Isom(\HH_\infty)$ is 
loxodromic if and only if its {\bf{translation length}}
\begin{equation}
L(h)=\inf \{ \dist(x,h(x)) \; \vert \; x\in \HH_\infty  \}
\end{equation}
is positive. In that case, $\lambda=\exp(L(h))$ is the largest eigenvalue of $h$ and $\dist(x,h^n(x))$ grows like $nL(h)$ as $n$ goes to $+\infty$ for
every point $x$ in $\HH_m$. 

\vspace{0.1cm}

When $h$ is elliptic or parabolic, the translation length vanishes (there is a point $u$ in $\HH_m$ with $L(h)=\dist(u,h(u))$ if $h$ is elliptic, but
no such point exists if $h$ is parabolic).

%%%
\subsubsection{Horoballs}\label{par:horoballs}
%%%

Let $\xi$ be a boundary point of $\HH_m$, and let $\epsilon$ be a positive real number. 
The {\bf{horoball}} $H_\xi(\epsilon)$ in $\HH_\infty$ is the subset
\[
H_\xi(\epsilon)=\{v\in \HH_m  \; ; \; 0 < \langle v\vert \xi\rangle < \epsilon\}.
\]
It is a limit of balls with centers converging to the boundary point $\xi$. An isometry $h$ fixing the boundary point $\xi$ maps 
 $H_\xi(\epsilon)$ to  $H_\xi(e^{L(h)} \epsilon)$.

%%%
\subsection{Distortion estimate}
%%%
Our goal is to prove the following theorem. 

\begin{thm}\label{thb} Let $m$ be any (possibly infinite) cardinal. Let $G$ be a subgroup of $\Iso(\HH_m)$. Let $f$ be a parabolic  element of $G$, and let $\xi\in \partial \HH_m$ be the fixed point of~$f$.
Suppose that the following two properties are satisfied.
\begin{itemize}
\item[(i)] There are positive constants $C$ and $C'>0$ and a point $x_0\in \HH_m$ such that   
\[
\dist(f^n(x_0),x_0)\ge C\log n - C'
\]
for all large enough values of $n$.

\item[(ii)] There exists a horoball $B$ centered at $\xi$ such that for every $g\in G$ either $gB=B$ or $gB\cap B=\emptyset$.
\end{itemize}
Then $f$ is at most $n^{2/C}$-distorted in $G$. In particular $C\le 2$ and if $C=2$ then $f$ is undistorted in $G$.
\end{thm}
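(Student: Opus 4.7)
The plan is to prove $\delta_{f,S}(n)\preceq n^{2/C}$ for every finite symmetric generating set $S$ of a subgroup of $G$ containing $f$, by combining the logarithmic lower bound (i) with a polynomial upper bound on the Euclidean displacement along the horosphere $\partial B$; the latter is exactly what assumption (ii) will produce. To begin, since $f$ is parabolic fixing $\xi$, the horoball $fB$ is also centered at $\xi$ and hence nested with $B$; by (ii) this forces $fB=B$, so $f$ stabilizes $B$ and each horosphere centered at $\xi$. Replacing $x_0$ by its radial projection $y_0\in\partial B$ along the geodesic to $\xi$—a projection that commutes with $f$—the triangle inequality turns (i) into the same type of bound at $y_0$:
\[
\dist(y_0,f^n(y_0))\;\geq\;\dist(f^n(x_0),x_0)-2\dist(x_0,y_0)\;\geq\;C\log n-C'',
\]
with $C''$ absorbing $2\dist(x_0,y_0)$.

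The key step is the Euclidean estimate, carried out in an upper-half-space model in which $\xi=\infty$ and $B=\{y\geq y_B\}$; write points as $(\mathbf{x},y)$. Suppose $f^m=t_1\cdots t_n$ with $t_k\in S$, set $g_k=t_1\cdots t_k$ and $y_k=g_k(y_0)\in\partial(g_kB)$. By (ii), either $g_kB=B$ (so $y(y_k)=y_B$) or $g_kB$ is disjoint from $B$, in which case it is a Euclidean ball tangent to the boundary hyperplane whose disjointness from $B$ forces its Euclidean diameter to be at most $y_B$; in both cases $y(y_k)\leq y_B$. With $D:=\max_{s\in S}\dist(y_0,s(y_0))$ one has $\dist(y_{k-1},y_k)\leq D$, and plugging the height bound into the upper-half-space distance formula gives $\|\mathbf{x}(y_{k-1})-\mathbf{x}(y_k)\|\leq 2y_B\sinh(D/2)$. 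Telescoping, $\|\mathbf{x}(y_0)-\mathbf{x}(y_n)\|\leq 2n\,y_B\sinh(D/2)$, and since $y_0,y_n\in\partial B$, Formula~(2.1), applied in a totally geodesic hyperbolic plane through $y_0$, $y_n$ and $\xi$, rewrites this as
\[
\sinh\!\left(\tfrac{1}{2}\dist(y_0,y_n)\right)\;=\;\frac{\|\mathbf{x}(y_0)-\mathbf{x}(y_n)\|}{2y_B}\;\leq\;n\sinh(D/2).
\]

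Substituting the lower bound from (i) and using $\sinh(u)\geq\tfrac{1}{2}e^u$ for large $u$ yields $m^{C/2}\preceq n$, hence $\delta_{f,S}(n)\preceq n^{2/C}$. Since $S$ was arbitrary, $f$ is at most $n^{2/C}$-distorted in $G$. The trivial bound $\delta_{f,\{f,f^{-1}\}}(n)\geq n$ forces $C\leq 2$, and when $C=2$ the estimate reduces to $\delta\preceq n$, the definition of $f$ being undistorted. The main obstacle, I expect, is the Euclidean displacement estimate: the full strength of (ii) is being used to confine every $y_k$ to Euclidean height at most $y_B$; without this confinement $y_k$ could drift arbitrarily high into $B$, a single step could then cover unbounded Euclidean displacement along $\partial B$, and the polynomial bound would collapse.
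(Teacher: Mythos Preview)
Your proof is correct and follows essentially the same geometric idea as the paper's: hypothesis~(ii) confines every intermediate point $g_k(y_0)$ to height $\le y_B$, so each step contributes bounded horospherical (Euclidean horizontal) displacement, giving a linear bound in the word length; the $\sinh$ formula then converts the hyperbolic lower bound $C\log m$ into the polynomial bound $m^{C/2}\preceq n$.

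The only difference is packaging. The paper replaces your explicit half-space computation by the intrinsic path metric $\dist_{Q^c}$ on the complement of the full $G$-orbit of horoballs $Q=\bigcup_{g\in G} gB$: Lemmas~\ref{compdi} and~\ref{diuns} show that for $x,y\in\partial B$ one has $\dist_{Q^c}(x,y)=2\sinh(\dist(x,y)/2)$, and since $G$ acts by isometries for $\dist_{Q^c}$ the standard triangle-inequality argument gives $\dist_{Q^c}(x_1,f^n x_1)\le \ell\cdot D_S$. This is model-independent and makes the $G$-invariance transparent, at the cost of proving those two lemmas; your argument bypasses the $G$-invariant metric by bounding each step directly, which is slightly more elementary but relies on the half-space coordinates. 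Both routes encode the same mechanism.
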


When looking at the Cremona group $\Cr_2(\bfk)$, 
we shall see examples of isometries in $\HH_\infty$ with $\dist(f^n(x_0),x_0)\sim C\log n$ for $C=1$ or $C=2$. 

\subsubsection{Complements of horoballs}

Let $X$ be a metric space. Let $W$ be a subset of $X$. Let $\dist_{W^c}$ (or $d_X$ when $W=\emptyset$) 
be the induced intrinsic distance on the complement of $W$; namely, for $x$ and $y$ in $W^c$, we have
 $\dist_{W^c}(x,y)=\sup_{\varepsilon>0}d_{W^c,\varepsilon}(x,y)$ with 
\[
d_{W^c,\varepsilon}(x,y)=\inf\left\{ \sum_{i=0}^{n-1}d(x_i,x_{i+1}):n\ge 0, x_0=x,x_n=y,\sup_id(x_i,x_{i+1})\le\varepsilon \right\}
\]
for points $x_i$ which are all in $X\setminus W$. It is a distance as soon as it does not take the $\infty$ value. 
In the cases we shall consider, $X$ will be the hyperbolic space $\HH_m$, $W$ will be a union of horoballs, and 
$W^c$ will be path connected. In that case, $\dist_{W^c}(x,y)$ is the infimum of the length of paths connecting
$x$ to $y$ within $W^c$. 
 
Similarly, if $Y$ is a  subset of $X$, we denote by $\dist_{Y}$ the induced intrinsic distance on $Y$ (hence, $\dist_{Y}=\dist_{(X\setminus Y)^c}$). 
 
\begin{lem}\label{compdi}
Let $B\subset \HH_m$ be an open horoball, with boundary $\partial B$.
Let $x$ and $ y$ be points on the horosphere $\partial B$.  
Then 
\[
\dist_{B^c}(x,y)= \dist_{\partial B}(x,y)=2\sinh(\dist(x,y)/2).
\]
\end{lem}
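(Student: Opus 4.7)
The plan is to pass to the upper half-space model, where everything becomes explicit. By applying an isometry of $\HH_m$ fixing $\xi$, I may arrange that $\xi$ is the point at infinity and that $B$ is the horoball $\{x_m>1\}$; then $\partial B=\{x_m=1\}$ and $B^c=\{0<x_m\le 1\}$. In these coordinates the hyperbolic metric reads $ds^2=(dx_1^2+\cdots+dx_m^2)/x_m^2$, and its restriction to the horosphere $\partial B$ is therefore exactly the Euclidean metric on $\R^{m-1}$.

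First I would compute $\dist_{\partial B}(x,y)$. Since $\partial B$ is intrinsically Euclidean, the shortest path on $\partial B$ between $x$ and $y$ is the straight segment, of length $\|x-y\|$. To relate $\|x-y\|$ to the hyperbolic distance $\dist(x,y)$ I restrict to a totally geodesic $2$-plane $\Sigma\subset\HH_m$ containing $x$, $y$ and $\xi$; this $\Sigma$ is isometric to the upper half-plane $\HH_2$, and $x,y$ sit on the horocycle at height $1$. Formula~\eqref{eq:dist-h2} then reads $\sinh(\dist(x,y)/2)=\|x-y\|/2$, hence $\dist_{\partial B}(x,y)=\|x-y\|=2\sinh(\dist(x,y)/2)$. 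For the remaining equality $\dist_{B^c}(x,y)=\dist_{\partial B}(x,y)$, the inclusion $\partial B\subseteq B^c$ yields $\dist_{B^c}(x,y)\le\dist_{\partial B}(x,y)$ at once; for the reverse, given any rectifiable path $\gamma\subset B^c$ from $x$ to $y$, the condition $x_m(t)\le 1$ along $\gamma$ forces
\[
\mathrm{length}_{\mathrm{hyp}}(\gamma)=\int\frac{|\dot\gamma(t)|_E}{x_m(t)}\,dt \;\ge\; \int|\dot\gamma(t)|_E\,dt \;\ge\; \|x-y\|,
\]
so $\dist_{B^c}(x,y)\ge\|x-y\|$, which closes the circle of inequalities.

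I do not expect a substantive obstacle. The only step that warrants any care is the normalization at the beginning: one must invoke the transitive action of the stabilizer of $\xi$ in $\Isom(\HH_m)$ on the horoballs centered at $\xi$, so that taking $B=\{x_m>1\}$ costs no generality. Once this is set up, the whole statement is driven by the simple monotonicity of the conformal factor $1/x_m$ on $B^c$, which is precisely what forces the horocyclic segment to beat any detour through the deeper portion of $B^c$.
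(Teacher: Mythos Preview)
Your proof is correct and follows essentially the same route as the paper: pass to the upper half-space model with the horoball at height~$>1$, use that the induced metric on the horosphere is Euclidean, and exploit $1/x_m\ge 1$ on $B^c$ to bound hyperbolic length below by Euclidean length. The only organizational difference is that the paper first reduces to a copy of $\HH_2$ via a $1$-Lipschitz projection onto the plane through $x,y,\xi$ and then runs the computation there, whereas you carry out the length estimate directly in $\HH_m$ and invoke the $2$-plane only to cite Formula~\eqref{eq:dist-h2}; your version is marginally cleaner for that reason.
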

\begin{proof}
The statement being trivial when $x=y$, we assume $x\neq y$ in what follows. 
Let $\xi$ be the center at infinity of $B$. Then $x$, $y$, and the boundary point $\xi$ are contained in a 
unique geodesic plane $P$. Since the projection of $\HH_m$ onto $P$ is a $1$-Lipschitz map, we have $\dist_{\partial B}(x,y)=\dist_{\partial B\cap P}(x,y)$
and $\dist_{B^c}(x,y)=\dist_{B^c\cap P}(x,y)$. Hence, we can replace $\HH_m$ by the $2$-dimensional hyperbolic space $P\simeq \HH_2$.  
To conclude, we use the Poincar\'e half-plane model of $\HH_2$. 
There is an isometry $P\to  \HH_2$ mapping the horosphere $\partial B\cap P$
to  the line $i+\R$, the points $x$ and $y$ to $i+t$ and $i+t'$, and $\xi$ to $\infty$.
If $\gamma(s)=x(s)+iy(s)$, $s\in [a,b]$, is a path in $P\cap B^c$ that connects $x$ to $y$, its length satisfies
\[
{\mathrm{length}}(\gamma) = \int_a^b\frac{(x'(s)^2+y'(s)^2)^{1/2}}{y(s)}ds \leq \int_a^b \vert x'(s) \vert ds
\]
because $y\leq 1$ in $B^c\cap P$.
Thus, the geodesic segment from $x$ to $y$ for $\dist_{B^c}$ (resp. $\dist_\partial B$) is the euclidean segment $\gamma(s)=i+s$, with $s\in [t,t']$, and
\[
\dist_{B^c}(x,y)= \dist_{\partial B}(x,y)=\vert t-t'\vert.
\]
We conclude with Formula \eqref{eq:dist-h2}, that gives $\sinh(\dist_{\HH_2}(x,y)/2)=\vert t-t'\vert /2$. 
\end{proof}

\begin{lem}\label{diuns}
 Let $(B_i)$ be a family of open horoballs in $\HH_m$ with pairwise disjoint closures and let $Q=\bigcup B_i$. Then 
\begin{itemize}
\item[(1)] $\dist_{Q^c}$ is a distance on $\HH_m\smallsetminus Q$;
\item[(2)] for every index $i$ and every pair of points $(x,y)$ on the boundary of $\partial B_i$, we have $\dist_{Q^c}(x,y)=\dist_{\partial B_i}(x,y)$.
\end{itemize}
\end{lem}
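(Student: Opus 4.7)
The general strategy is to reduce both statements to the one-horoball situation of Lemma~\ref{compdi} by constructing detours along horospheres.

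For assertion (1), the triangle inequality, symmetry, and reflexivity of $\dist_{Q^c}$ are immediate from the definition. Non-degeneracy follows from the elementary observation that every $\varepsilon$-chain in $Q^c$ is in particular an $\varepsilon$-chain in $\HH_m$, whence $\dist_{Q^c}(x,y)\geq \dist(x,y)$. The only real content is finiteness. Given $x,y\in Q^c$, I would start from the hyperbolic geodesic $\gamma$ joining them, of length $L=\dist(x,y)$, and examine $\gamma\cap Q$: this is an at most countable disjoint union of open subsegments $(a_i,b_i)\subset \gamma$, each contained in some $B_i$ with endpoints on $\partial B_i$, and with $\sum_i \dist(a_i,b_i)\leq L$. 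The plan is to replace each chord $[a_i,b_i]$ by the geodesic from $a_i$ to $b_i$ on $\partial B_i$, whose length is $2\sinh(\dist(a_i,b_i)/2)$ by Lemma~\ref{compdi}. Using that $t\mapsto 2\sinh(t/2)/t$ is increasing, so that $2\sinh(t/2)\leq (2\sinh(L/2)/L)\,t$ on $[0,L]$, the total length of the modified route is bounded above by $2\sinh(L/2)<\infty$. The route lies in $Q^c$ because $\partial B_i\subset Q^c$ (here the pairwise disjointness of the closures is essential: $\partial B_i\cap \overline{B_j}=\emptyset$ for $j\neq i$), and approximating it by $\varepsilon$-chains gives $\dist_{Q^c}(x,y)\leq 2\sinh(L/2)<\infty$.

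For assertion (2), one inequality is easy: the geodesic on $\partial B_i$ joining $x$ to $y$ is a path contained in $Q^c$ (once again because the closures of the $B_j$ are pairwise disjoint), and approximating it by $\varepsilon$-chains yields $\dist_{Q^c}(x,y)\leq \dist_{\partial B_i}(x,y)$. For the reverse inequality, the inclusion $Q^c\subset B_i^c$ yields $\dist_{Q^c}(x,y)\geq \dist_{B_i^c}(x,y)$, which by Lemma~\ref{compdi} equals $\dist_{\partial B_i}(x,y)$.

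The main obstacle is the finiteness step in (1): the geodesic $\gamma$ could in principle cross infinitely many horoballs $B_i$, so the modified route need not be a genuine continuous path, and one must worry about accumulation of entry-exit points. The convexity inequality above handles this quantitatively, since the detour lengths $2\sinh(d_i/2)$ are dominated by a fixed multiple of the chord lengths $d_i$ and hence summable; and the $\varepsilon$-chain formulation in the definition of $\dist_{Q^c}$ sidesteps any direct continuity requirement. Once finiteness is secured, both conclusions follow cleanly from Lemma~\ref{compdi}.
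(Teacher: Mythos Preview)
Your argument is correct and follows essentially the same route as the paper: for (1) you take the hyperbolic geodesic from $x$ to $y$, replace each chord through a horoball by the horosphere detour via Lemma~\ref{compdi}, and bound the total detour length using the convexity of $t\mapsto 2\sinh(t/2)$; for (2) you use exactly the same sandwich $\dist_{\partial B_i}\geq\dist_{Q^c}\geq\dist_{B_i^c}=\dist_{\partial B_i}$. Your treatment of the possibility of infinitely many crossings is in fact more explicit than the paper's, and your bound $\dist_{Q^c}(x,y)\leq 2\sinh(\dist(x,y)/2)$ is slightly sharper than the paper's $(1+C)\dist(x,y)$, but the underlying idea is identical.
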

\begin{proof}
Let $(x,y)$ be a pair of points in $Q^c$. Consider the unique geodesic segment of $\HH_m$ that joins $x$ to $y$. 
Denote by $[u_j,u'_j]$ the intersection of this segment with $B_j$. Let $C$ be a positive constant such that $2\sinh(s/2)\le Cs$ 
for all $s\in [0,\dist(x,y)]$ (such a constant depends on $\dist(x,y)$). From Lemma \ref{compdi} we obtain 
\begin{eqnarray}
\dist_{Q^c}(x,y) & \leq & \dist(x,y)+\sum_j \dist_{\partial B_j}(u_j,u'_j) \\
 & \leq & \dist(x,y)+\sum_j C\dist(u_j,u'_j)\\ 
  & \leq & (1+C)\dist (x,y)
\end{eqnarray}
Thus, $\dist_{Q^c}(x,y)$ is finite: this proves (1).

For (2), note that $Q^c\subset B_i^c$ and Lemma~\ref{compdi} imply $\dist_{Q^c}(x,y)\geq \dist_{B_i^c}(x,y)=\dist_{\partial B_i}(x,y)$, and that $\dist_{\partial B_i}(x,y)\geq \dist_{Q^c}(x,y)$ because $\partial B_i\subset \overline{Q^c}$. \end{proof}
 
\subsubsection{Proof of Theorem~\ref{thb}} 
Changing $B$ in a smaller horoball, we  can suppose that $B$ is open and that $g\bar{B}\cap \bar{B}$ is empty for all $g\in G$ with $gB\neq B$.
Let $Q$ be the union of the horoballs $gB$ for $g\in G$. 
Let $x_1$ be a point on the horosphere $\partial B$. Let $D>1$ satisfy $2\log(D)=C'+2\dist(x_1,x_0)$. 
From the first hypothesis, we know that 
\[
\dist(f^n(x_1),x_1)\ge C\log n - 2\log(D)
\]
for all sufficiently large values of $n$.
By Lemmas \ref{compdi} and \ref{diuns}, we get 
\begin{eqnarray*}
\dist_{Q^c}(x_1,f^n(x_1)) & = & 2\sinh(\dist(x_1,f^n(x_1))/2)\\
 & \ge & 2\sinh(C\log(n)/2-\log(D))\\
& \ge &  D^{-1}n^{C/2}-Dn^{-C/2}
\end{eqnarray*}
for large enough $n$.
Let us now estimate the distortion of $f$ in $G$. Let $S$ be a finite symmetric subset of $G$ and let $D_S$ 
be the maximum of the distances $\dist_{Q^c}(g(x_1),x_1)$ for $g$ in $S$.
Suppose that $f^n=g_1\circ g_2\circ \cdots \circ g_\ell$ is a composition of $\ell$ elements $g_i\in S$. 
The group generated by the $g_i$ acts by isometries on $Q^c$ for the distance $\dist_{Q^c}$. 
Thus, 
\begin{eqnarray*}
\dist_{Q^c}(f^n (x_1),x_1) & = & \dist_{Q^c}(g_1\circ \cdots \circ g_\ell(x_1),x_1) \\
& \leq & \dist_{Q^c}(g_1\circ \cdots \circ g_{\ell}(x_1), g_1\circ \cdots \circ g_{\ell-1}(x_1)) \\
& & + \dist_{Q^c}(g_1\circ \cdots \circ g_{\ell-1}(x_1),x_1) \\
& \leq & \dist_{Q^c}(g_{\ell}(x_1),x_1) + \dist_{Q^c}(g_1\circ \cdots \circ g_{\ell-1}(x_1),x_1) \\
& \leq & \sum_{j=1}^{\ell} \dist_{Q^c}(g_{j}(x_1),x_1)  \\
& \leq & \ell D_S.
\end{eqnarray*}
This shows that $D^{-1}n^{C/2}-Dn^{-C/2}\leq D_S\times \ell$ for  large values of $n$ (and $\ell$), and the conclusion follows.

%%%%%
\section{The Picard-Manin space and hyperbolic geometry}\label{par:Picard-Manin}
%%%%%

In this section, we recall the construction of the Picard-Manin space of a projective surface $X$
(see \cite{Cantat:SLC, Manin:cubic-forms} for details).

%%%
\subsection{Picard-Manin spaces}
%%%
Let $X$ be a smooth, irreducible, projective surface. We denote its  N\'eron-Severi group by $\Num(X)$;
when $\bfk=\C$, $\Num(X)$ can be identified to $H^{1,1}(X;\R)\cap H^2(X;\Z)$.
The  {\bf{intersection form}}
\begin{equation}
(C,D)\mapsto C\cdot D
\end{equation}
 is a non-degenerate quadratic form on $\Num(X)$ of signature $(1,\rho(X)-1)$. The Picard-Manin space $\ZZ(X)$ is 
 the limit $\lim_{\pi\colon X'\to X} \Num(X')$ obtained by looking at all birational morphisms $\pi\colon X'\to X$, where $X'$ is
 smooth and projective. 
By construction, $\Num(X)$ embeds naturally as a proper subspace of $\ZZ(X)$, and the intersection form 
is negative definite on the infinite dimensional space $\Num(X)^\perp$.

\begin{eg}\label{eg:bubble}
The group $\Pic(\P^2_\bfk)$ is generated by the class $\bfe_0$ of a line.
Blow-up one point $q_1$ of the plane, to get a morphism $\pi_1\colon X_1\to \P^2_\bfk$. Then, $\Pic(X_1)$ is a free abelian group of rank $2$, 
generated by the class $\bfe_1$ of the exceptional divisor $E_{q_1}$, and by the pull-back of $\bfe_0$ under $\pi_1$ (still denoted $\bfe_0$ in what follows). 
After $n$  blow-ups  one obtains
\begin{equation}\label{Eq:directsum}
\Pic(X_n)=\Num(X_n)=\Z \bfe_0 \oplus \Z \bfe_1 \oplus \ldots \oplus \Z \bfe_n
\end{equation}
where $\bfe_0$ (resp. $\bfe_i$) is the class of the total transform of a line (resp. of the exceptional divisor $E_{q_i}$) by the composite morphism $X_n\to \P^2_\bfk$
(resp. $X_n\to X_i$). 
The direct sum decomposition \eqref{Eq:directsum} is orthogonal with respect to the intersection form: 
\begin{equation}
\bfe_0\cdot\bfe_0=1,\quad\bfe_i\cdot\bfe_i=-1\;
\,\forall\,1\leq i\leq n,\quad\text{and}
\quad \bfe_i\cdot \bfe_j =0\;\,\forall\,0\leq i\neq j\leq n.
\end{equation}
Taking limits,   $\ZZ(\P^2_\bfk)$ splits as a direct sum 
$
\ZZ(\P^2_\bfk)= \Z \bfe_0 \oplus \bigoplus_q \Z \bfe_q
$
where $q$ runs over all possible points of the so-called
bubble space  $ {\mathcal{B}}(\P^2_\bfk)$ of $\P^2_\bfk$ (see \cite{Manin:cubic-forms, Dolgachev:CAG, Blanc-Cantat}).
\end{eg}

%%%
\subsection{The hyperbolic space $\HH_\infty(X)$}
%%%
Denote by $\ZZ(X,\R)$ and $\Num(X,\R)$ the tensor products   
$\ZZ(X)\otimes_\Z \R$ and  $\Num(X)\otimes_\Z \R$. Elements of $\ZZ(X,\R)$ are finite sums 
$ u_X + \sum_i a_i \bfe_i $
where $u_X$ is an element of $\Num(X,\R)$, each $\bfe_i$ is the class of an exceptional divisor, and the coefficients $a_i$ are real numbers. 
Allowing infinite sums with $\sum_i a_i^2<+\infty$, one gets a new space $\ZC(X)$, on which the intersection form extends
continuously \cite{Cantat:Annals, Boucksom-Favre-Jonsson}. 
Fix an ample class $\bfe_0$ in $\Num(X)\subset \ZZ(X)$. 
The subset of elements $u$ in $\ZC(X)$ such that  $u\cdot u=1$ is a hyperbolo\"{\i}d, and
  \begin{equation}
 \HH_\infty(X)=\{u \in \ZC(X)
 \; \vert \quad  u\cdot u=1
 \quad\text{and}\quad u\cdot\bfe_0>0\}
\end{equation}
is the sheet of that hyperboloid  containing ample classes of $\Num(X,\R)$.  
With the distance $\dist(\cdot, \cdot)$ defined by
\begin{equation}\label{eq:distance-intersection}
\cosh \dist(u,u')=u\cdot u',
\end{equation}
$ \HH_\infty(X)$ is isometric to the hyperbolic space $ \HH_\infty$ described in Section~\ref{par:Hyperb-space}. 

We denote by $\Isom(\ZC(X))$ the group of isometries of $\ZC(X)$ with respect to the intersection form, and by $\Isom( \HH_\infty(X))$ 
the subgroup that preserves $ \HH_\infty(X)$. As explained in \cite{Cantat:Annals, Cantat:SLC, Manin:cubic-forms}, the group 
$\Bir(X)$ acts by isometries on $\HH_\infty$. The homomorphism 
\begin{equation}
f\in \Bir(X) \mapsto f_\bullet\in \Isom( \HH_\infty(X))
\end{equation}
is injective. 
  
%%%%%
\subsection{Types and degree growth}\label{par:Types-Growth}
%%%%% 
Since $\Bir(X)$ acts faithfully on $ \HH_\infty(X)$, there are three types of birational transformations: {\bf{Elliptic}}, {\bf{parabolic}}, and {\bf{loxodromic}}, according to the type of the associated isometry of $ \HH_\infty(X)$. We now describe how each type can be characterized in algebro-geometric terms.

\subsubsection{Degrees, distances, translation lengths and loxodromic elements} Let ${\bfh}\in \Num(X, \R)$ be an ample class with
self-intersection $1$.  The degree of $f$ with respect to the polarization $\bfh$ is
$\deg_{\bfh}(f)= f_\bullet({\bfh})\cdot \bfh=\cosh(\dist(\bfh, f_\bullet\bfh)).$
Consider for instance an element $f$ of $\Bir(\P^2_\bfk)$, with the polarization $\bfh=\bfe_0$ given from the class of a line; 
then the image of a general line by $f$ is a curve of degree $\deg_\bfh(f)$ which goes through
the base points $q_i$ of $f^{-1}$ with certain multiplicities $a_i$, and 
\begin{equation}
f_\bullet\bfe_0=\deg_\bfh(f) \bfe_0-\sum_i a_i\bfe_i
\end{equation}
where $\bfe_i$ is the class corresponding to the exceptional divisor that one gets when blowing up the point $q_i$. 

 If the translation length $L(f_\bullet)$ is positive, we know that the distance $\dist(f_\bullet^n(x), x)$ grows like $nL(f_\bullet)$ for every $x\in  \HH_\infty(X)$ 
(see Section~\ref{par:Hyperb-space}). We get: {\sl{
the   logarithm $\log(\lambda_1(f))$ of the dynamical degree of $f$ is
the translation length $L(f_\bullet)$ of the isometry $f_\bullet$.}}
In particular, $f$ is loxodromic if and only if $\lambda_1(f)>1$.

\subsubsection{Classification} Elliptic and parabolic transformations are also classified in terms of degree growth.
Say that a sequence of real numbers $(d_n)_{n\geq 0}$
grows linearly (resp. quadratically) if $n/c\leq d_n \leq cn$ (resp. $n^2/c\leq d_n \leq cn^2$) for some $c>0$. 

\begin{thm}[Gizatullin, Cantat, Diller and Favre, see \cite{Gizatullin:1980, Cantat:Acta, Cantat:Trans, Diller-Favre:2001}]\label{thm:Elements} Let $X$ be a projective surface, defined over an algebraically closed field $\bfk$, and $\bfh$ be a polarization of $X$. Let $f$ be a birational transformation of $X$.
\begin{enumerate}
\item $f$ is elliptic if and only if the sequence $\deg_\bfh(f^n)$ is bounded. In this case, there exists a birational map $\phi\colon Y\dasharrow X$
and an integer $k\geq 1$ such that $\phi^{-1} \circ f\circ \phi $ is an automorphism of $Y$ and $\phi^{-1} \circ f^k\circ \phi $ is in the connected component
of the identity of the group $\Aut(Y)$.
\item $f$ is parabolic if and only if the sequence $\deg_\bfh(f^n)$ grows linearly or quadratically with $n$. If $f$ is parabolic,
there exists a birational map $\psi\colon Y\dasharrow X$ and a  fibration $\pi\colon Y\to B$ onto a curve $B$ such that 
$\psi^{-1}\circ f \circ \psi$ permutes the fibers of $\pi$. The fibration is rational if the growth is linear, and  elliptic (or quasi-elliptic if $char(\bfk)\in\{2,3\}$) 
if the growth is quadratic.
\item $f$ is loxodromic if and only if $\deg_\bfh(f^n)$ grows exponentially fast with $n$: There is a constant $b_\bfh(f)>0$ such that $\deg_\bfh(f^n)=b_\bfh(f)\lambda(f)^n+O(1)$.
\end{enumerate}
\end{thm}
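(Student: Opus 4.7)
The plan is to trichotomize according to the type of $f_\bullet\in\Isom(\HH_\infty(X))$ (elliptic, parabolic, loxodromic) and match each type to a degree-growth regime via the fundamental identity
\[
\deg_\bfh(f^n)=\cosh\bigl(\dist(\bfh,f^n_\bullet\bfh)\bigr),
\]
which holds once $\bfh\cdot\bfh=1$, as recorded in \S\ref{par:Types-Growth}. From the general classification of isometries recalled in \S\ref{par:Hyperb-space} the three regimes of the displacement sequence are mutually exclusive: bounded orbit, parabolic divergence, or linear escape at rate $L(f_\bullet)=\log\lambda(f)$. So once the algebraic side is matched with each regime, the trichotomy follows.

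For the loxodromic case (3) I would start from $\dist(\bfh,f^n_\bullet\bfh)=nL(f_\bullet)+O(1)$ and decompose $\bfh$ along the eigenlines $\R v^{\pm}_{f_\bullet}$ together with the orthogonal rotation part. Plugging into the cosh identity yields the sharp asymptotic $\deg_\bfh(f^n)=b_\bfh(f)\lambda(f)^n+O(1)$ with $b_\bfh(f)=\langle v^+_{f_\bullet},\bfh\rangle\langle v^-_{f_\bullet},\bfh\rangle/\langle v^+_{f_\bullet},v^-_{f_\bullet}\rangle>0$; the positivity comes from $\bfh$ being ample and the $v^{\pm}_{f_\bullet}$ lying on $\partial\HH_\infty(X)$. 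Conversely, exponential degree growth forces $L(f_\bullet)>0$, i.e.\ loxodromicity. For the elliptic case (1), bounded degree is equivalent to the orbit $\{f^n_\bullet\bfh\}$ being bounded in $\HH_\infty(X)$, which via a circumcenter argument in the CAT$(-1)$ space $\HH_\infty(X)$ is equivalent to $f_\bullet$ having a fixed point in $\HH_\infty(X)$. Upgrading to the algebro-geometric conclusion that some iterate of a conjugate of $f$ lies in $\Aut(Y)^0$ is then exactly the content of Weil's regularization theorem quoted in the introduction.

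The substantial case is the parabolic one. Here the orbit of $\bfh$ is unbounded but sub-exponential, and $f_\bullet$ fixes a unique isotropic ray $\R_+v_\xi\subset\partial\HH_\infty(X)$. The key geometric step---and the main obstacle of the proof---is to promote $v_\xi$ to a genuine curve class: after conjugating by some $\psi\colon Y\dasharrow X$, the class $v_\xi$ must be shown to lie in $\Num(Y,\R)$ and to be proportional to the class of the fiber of a morphism $\pi\colon Y\to B$ whose general fiber is rational or (quasi-)elliptic, with $f$ permuting the fibers of $\pi$. This is the heart of the Gizatullin--Cantat--Diller--Favre analysis: one argues that $v_\xi$ is $f_\bullet$-invariant with $v_\xi\cdot v_\xi=0$, uses the Hodge index theorem on successive models to show that $v_\xi$ is eventually represented by an effective class of self-intersection zero, and invokes Riemann--Roch and an adjunction computation to realize it as the fiber class of a morphism; the dichotomy rational vs.\ (quasi-)elliptic is controlled by the canonical class $K_Y\cdot v_\xi$. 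Once $\pi$ is in hand, the precise polynomial rate is read off by a horospherical computation at $\xi$: in the Jonquières (rational-fibration) case the $f_\bullet$-invariant sublattice spanned by $v_\xi$ and a section has rank two and the cosh identity gives $\deg_\bfh(f^n)\asymp n$, while in the Halphen case the additional transverse generator produced by the elliptic fibration forces the horospherical displacement of $\bfh$ to grow linearly in the Hilbert norm, yielding $\deg_\bfh(f^n)\asymp n^2$.
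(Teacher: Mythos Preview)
The paper does not prove this theorem. It is stated with attribution to Gizatullin, Cantat, and Diller--Favre and used as input; there is no proof in the text to compare against. So your proposal is not being measured against anything the paper actually does.

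That said, your sketch is broadly in line with the standard arguments in the cited references. The loxodromic and elliptic cases are essentially as you describe: the $\cosh$ identity plus the eigenline decomposition gives the exponential asymptotic with the explicit constant, and bounded degree $\Leftrightarrow$ bounded orbit $\Leftrightarrow$ fixed point (via the CAT$(0)$ circumcenter), after which Weil's regularization theorem supplies the model $Y$. Your parabolic paragraph, however, is where the real content lies and is also where your outline is thinnest. The passage from the isotropic ray $\R_+v_\xi$ to an honest fiber class on some model $Y$ is not a routine application of Hodge index plus Riemann--Roch: in the actual proofs (Gizatullin over $\C$, Diller--Favre in the analytic setting, Cantat in general) one must first show that the boundary fixed point is represented by a class in a \emph{finite-dimensional} N\'eron--Severi group, i.e.\ that only finitely many exceptional classes appear, and this requires controlling the base locus of the iterates. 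The linear vs.\ quadratic dichotomy is also not quite a ``horospherical computation'' in the way you phrase it: it ultimately comes from whether $K_Y\cdot v_\xi$ is negative (rational fibers, Jonqui\`eres, linear growth) or zero (genus-one fibers, Halphen, quadratic growth), and the growth rate is then read off from the explicit structure of the parabolic isometry on the finite-dimensional $\Num(Y)$, not from $\HH_\infty$ directly. If you intend to include an actual proof rather than a citation, these two steps need substantially more detail.
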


%%%%%
\subsection{Elliptic elements of $\Cr_2(\bfk)$}
%%%%% 

Every elliptic, infinite order element of $\Bir(\P^2_\bfk)$ is conjugate to an automorphism $f\in \PGL_3(\bfk)$ 
when $\bfk$ is algebraically closed (see~\cite{Blanc-Deserti:2015}). Thus, Theorem~\ref{thm:dbl-expo} stipulates that elliptic elements 
of infinite order are 
\begin{itemize}

\item exactly doubly exponentially distorted if they are conjugate to a virtually unipotent element of $\PGL_3(\bfk)$;

\item exactly exponentially distorted otherwise. 
\end{itemize}

%%%%%
\subsection{Loxodromic elements of $\Cr_2(\bfk)$}
%%%%%  
Loxodromic elements have an exponential degree growth; by Proposition~\ref{degd}, they are not distorted. This result 
applies to all loxodromic elements $f\in \Bir(X)$, for all projective surfaces.

%%%%%
\subsection{Parabolic elements of $\Cr_2(\bfk)$}
%%%%% 

According to Theorem~\ref{thm:Elements}, there are two types of parabolic elements, depending on the growth of the sequence $\deg(f^n)$: 
Jonqui\`eres and Halphen twists. Here, we collect extra informations on these transformations, and study their distortion properties in 
Sections~\ref{par:Par-Horoball} and~\ref{par:Jonq-Non-Disto}. 

 \subsubsection{Jonqui\`eres twists}\label{par:Jonquieres-twists}
  Let $f$ be an element of $\Cr_2(\bfk)$ for which the sequence $\deg(f^n)$
 grows linearly with $n$. Then, $f$ is called a {\bf{Jonqui\`eres twist}}. Examples are given by  the transformations 
 $f(X,Y)=(X,Q(X)Y)$ with $Q\in \bfk(X)$ of degree $\geq 1$. The following properties follow from \cite{Blanc-Cantat, Blanc-Deserti:2015, Diller-Favre:2001}. \smallskip
 
\noindent{\bf{Normal form}}.-- There is a birational map $\varphi\colon \P^1_\bfk\times\P^1_\bfk\dasharrow \P^2_\bfk$ 
that conjugates $f$ to an element $g$ 
 of $\Bir(\P^1_\bfk\times\P^1_\bfk)$ which preserves the projection $\pi\colon \P^1_\bfk\times\P^1_\bfk\to \P^1_\bfk$
 onto the first factor. More precisely, there is an automorphism $A$ of $\P^1_\bfk$ such that  $\pi\circ g =A\circ \pi$. If $x$ and $y$ are 
 affine coordinates on each of the factors, then 
 \begin{equation}
 g(x,y)=(A(x),B(x)(y))
 \end{equation}
 where $(A,B)$ is an element of the semi-direct product $\PGL_2(\bfk)\ltimes \PGL_2(\bfk(x))$. 
 Alternatively, $f$ is conjugate to an element $g'$ of $\Cr_2(\bfk)$ that preserves the pencil of lines through the point $[0:0:1]$.\smallskip

\noindent{\bf{Action on $\HH_\infty(\P^2_\bfk)$}}.--
Assume now that $g'$ preserves the pencil of lines through the point $q_1:=[0:0:1]$. Let $\bfe_1\in \ZZ(\P^2_\bfk;\R)$ be the 
class of the exceptional divisor $E_1$ that one gets by blowing-up $q_1$. Then $g'_\bullet$ preserves the 
isotropic vector $\bfe_0-\bfe_1$ (corresponding to the class of the linear system of lines through $q_1$), and
 the unique fixed point of $g'_\bullet$ on $\partial \HH_\infty(\P^2_\bfk)$ is determined by $\bfe_0-\bfe_1$.
Let $d$ denote the degree $\deg_{\bfe_0}(g')$. Let $q_i$ denote the base points of $(g')^{-1}$ (including 
infinitely near base points) and $\bfe(q_i)$ be the corresponding classes of exceptional divisors. 
From \cite{Blanc-Cantat, Alberich}, one knows that there are $2d-1$ base points (including $q_1$), and that
\begin{eqnarray}
g'_\bullet\bfe_0 & = & d\bfe_0-(d-1)\bfe(q_1)-\sum_{i=2}^{2d-1} \bfe(q_i) \\
g'_\bullet\bfe(q_1) & = & (d-1)\bfe_0-(d-2)\bfe(q_1)-\sum_{i=2}^{2d-1} \bfe(q_i).
\end{eqnarray}
 
\noindent{\bf{Degree growth}}.--  
The sequence $\frac{1}{n}\deg_{\bfe_0}(f^n)$ converges toward a number $\alpha(f)$. The 
 set $\{\alpha(hfh^{-1}); h\in \Cr_2(\bfk)\}$ admits a minimum; this minimum is of the form $\frac{1}{2}\mu(f)$
 for some integer $\mu(f)>0$, and there is an integer $a\geq 1$ such that $\alpha(f)=  \frac{1}{2}\mu(f) a^2$.
Blanc and D\'eserti prove also that $a=1$ precisely when $f$ preserves a pencil of lines in $\P^2_\bfk$ (thus, the conjugate
$g'$ of $f$ satisfies $\alpha(g')= \frac{1}{2}\mu(f) $).  
Moreover, when $f$ preserves such a pencil, one knows from \cite{Blanc-Cantat}, Lemma~5.7, that
$\deg_{\bfe_0}(f^n)$ is a subadditive sequence. Thus, $\frac{1}{n}\deg_{\bfe_0}(f^n)\geq \mu/2$, and 
$\mu/2$ is the infimum of $\frac{1}{n}\deg_{\bfe_0}(f^n)$. In Section~\ref{par:Jonq-Non-Disto}, 
we shall describe how Blanc and D\'eserti interpret  $\mu(f)$ 
as an asymptotic number of base points.  

%%%
 \subsubsection{Halphen twists}\label{par:Halphen-twists}
%%%
  Let $f$ be an element of $\Cr_2(\bfk)$ for which the sequence $\deg(f^n)$
 grows quadratically with $n$. Then, $f$ is called a {\bf{Halphen twist}}. The following properties follow from \cite{Blanc-Cantat, Cantat-Dolgachev, Cantat-Favre}. \smallskip

\noindent{\bf{Normal form.--}} There is a rational surface $X$, together with a birational map $\varphi\colon X\dasharrow \P^2_\bfk$
and a genus~$1$ fibration $\pi\colon X\to \P^1_\bfk$ such that $g=\varphi^{-1}\circ f \circ \varphi$ is a regular automorphism
of $X$ that preserves the fibration $\pi$. More precisely, there is an element $A$ in $\Aut(\P^1_\bfk)$ of finite order such that 
$\pi \circ g = A\circ \pi.$ Changing $g$ into $g^k$ where $k$ is the order of $g$, we may assume that the action on the base of $\pi$ is trivial; then, 
$g$ acts by translations along the fibers of $\pi$. 

There is a classification of genus~$1$ pencils of the plane up to birational conjugacy, which dates back to Halphen 
(see \cite{Dolgachev:1966, Grivaux:2016}): 
a Halphen pencil of index~$l$ is a pencil of curves of degree
$2l$ with $9$ base-points of multiplicity $l$. Every Halphen twist  $f$ preserves such a pencil; on $X$, the pencil corresponds 
to the genus~$1$ fibration which is $g$-invariant. \smallskip

\noindent{\bf{Action on $\HH_\infty(\P^2_\bfk)$ and degree growth.--}}
Let $\bfc$ be the class of the fibers of $\pi$ in $\Num(X)$ (resp. in $\ZZ(X)=\ZZ(\P^2_\bfk)$). 
This class is $g$-invariant (resp. $f_\bullet$-invariant) and isotropic. Thus $\bfc\in \ZZ(\P^2_\bfk)$ determines
the unique fixed point of the parabolic isometry $f_\bullet$ on $\partial \HH_\infty(\P^2_\bfk)$. 

After conjugacy, we may assume that the genus~$1$ fibration $\pi$ comes from a Halphen pencil 
of the plane of index $l$ with nine base points $q_1$, $\ldots$, $q_9$. This linear system corresponds
to the class $\bfc$ such that
\begin{equation}
\frac{1}{l}\bfc=3\bfe_0- \sum_{j=1}^9 \bfe(q_j).
\end{equation}  
Thus, after conjugacy, we may assume that the Halphen twist $g$ fixes such a class. 
Under this hypothesis, Lemma~5.10 of \cite{Blanc-Cantat} provides the following 
inequality 
\begin{equation}
\sqrt{\deg_{\bfe_0}(g^{n+m})}\leq \sqrt{\deg_{\bfe_0}(g^{n})} + \sqrt{\deg_{\bfe_0}(g^{m})}
\end{equation}
for all integers $n,m \geq 0$. In particular, the number 
\begin{equation}
\tau(g)=\inf_{n>0}\frac{1}{n} \sqrt{\deg_{\bfe_0}(g^n)}=\lim_{n\to + \infty} \frac{1}{n} \sqrt{\deg_{\bfe_0}(g^n)}
\end{equation}
is a well defined positive real number, and 
$
\deg_{\bfe_0}(g^n)\geq \tau(g) n^2
$
for all $n\geq 1$. 
Blanc and D\'eserti prove that the minimum $\kappa(g)=\min \tau(hgh^{-1})^2$ for ${h\in \Cr_2(\bfk)}$ is a positive rational number and that
$\lim_{n\to +\infty}\frac{1}{n^2}\deg_{\bfe_0}(g^n)=\frac{\kappa(g)}{9}a^2
$ for some integer $a\geq 3$. 

%%%%%%%%%%%%%%%%%%%%%%%%%%%
%%%%%%%%%%%%%%%%%%%%%%%%%%%
\section{Parabolic elements of $\Cr_2(\bfk)$ and their invariant horoballs}\label{par:Par-Horoball}
%%%%%%%%%%%%%%%%%%%%%%%%%%%
%%%%%%%%%%%%%%%%%%%%%%%%%%%

For simplicity, the hyperbolic space $\HH_\infty(\P^2_\bfk)$ will be denoted by $\HH_\infty$. In this section, 
we prove Theorem~\ref{thc}, which states that sufficiently small horoballs invariant by Jonqui\`eres or Halphen twists
are pairwise disjoint. Combined with Theorem~\ref{thb}, this result implies that Halphen twists are not distorted.

%%%%%%%%%%%%%%%%%%%%%%%%%%%
\subsection{Small horoballs associated to Halphen and Jonqui\`eres twists} 
%%%%%%%%%%%%%%%%%%%%%%%%%%%
%%%
\subsubsection{Fixed points of Jonqui\`eres and Halphen twists} 
%%%
Let $f$ be an element of $\Cr_2(\bfk)$ acting as a parabolic isometry on the hyperbolic space 
$\HH_\infty$. Then, $f$ fixes a unique point $\xi$ on the boundary $\partial \HH_\infty$. Up 
to conjugacy, there are two possibilities: 
\begin{itemize}
\item $f$ is a Jonqui\`eres twist, and $f$ preserves the pencil of lines through a point $q_1$ of $\P^2_\bfk$. Then, setting $\bfe_1=\bfe(q_1)$, the
boundary point $\xi$ is represented by the ray $\R^+w$, where
\begin{equation}\label{Eq:wJ}
w_J=\bfe_0-\bfe_1.
\end{equation}

\item $f$ is a Halphen twist. Then, up to conjugacy, $\xi$ is $\R^+w$ with 
\begin{equation}\label{Eq:wH}
w_H=3\bfe_0-\bfe_1-\bfe_2-\bfe_3-\bfe_4-\bfe_5-\bfe_6-\bfe_7-\bfe_8-\bfe_9,
\end{equation}
where the $\bfe_i$ are the classes given by the blow-up of the base-points of a Halphen 
pencil. 
\end{itemize}

%%%
\subsubsection{Disjonction of horoballs}
%%%
 If $w$ is an element of the Picard-Manin space with $w^2=0$ and $w\cdot \bfe_0>0$, 
the ray $\R^+w$ determines a boundary point of $\HH_\infty$. Let $\epsilon$ be a positive
real number. The horoball $H_w(\epsilon)$ is defined in Section~\ref{par:horoballs}; its elements are characterized by the following three constraints: 
\begin{equation}
v^2=1, \quad v\cdot \bfe_0 > 0, \quad 0 < v\cdot w < \epsilon.
\end{equation}
When $f$ is a Jonqui\`eres or Halphen twist then, after conjugacy, $f_\bullet$ preserves the horoballs centered $H_{w_J}(\epsilon)$ or $H_{w_H}(\epsilon)$.
Define 
\begin{equation}
\epsilon_J=\frac{\sqrt{3}-1}{\sqrt{2}}\simeq 0.5176 \quad \textrm{ and }\quad\epsilon_H:=\frac{1}{3\sqrt{2}}\simeq 0.2357
\end{equation}
 
\begin{thm}\label{thc}\label{thm:disjoint-horoballs}
Let $w_J$ be the class $\bfe_0-\bfe_1\in \HH_\infty(\P^2_\bfk)$ determined by the pencil of lines through a point $q_1$. If $0< \epsilon < \epsilon_J$, the horoballs 
$h(H_{w_J}(\epsilon))$, for $h\in \Cr_2(\bfk)$, are pairwise disjoint; more precisely, given $h$ in 
$\Cr_2(\bfk)$, 
\[
 \textrm{ either }\quad h(H_{w_J}(\epsilon))=H_{w_J}(\epsilon) \quad\mathrm{ or }\quad h(H_{w_J}(\epsilon))\cap H_{w_J}(\epsilon)=\emptyset.
\]

Let $w_H$ be the class $3\bfe_0-\bfe_1-\bfe_2-\bfe_3-\bfe_4-\bfe_5-\bfe_6-\bfe_7-\bfe_8-\bfe_9$ determined by a Halphen pencil. 
If $0< \epsilon \leq \epsilon_H$, the horoballs 
$h(H_{w_H}(\epsilon))$, $h\in \Cr_2(\bfk)$, are pairwise disjoint; more precisely, given $h$ in 
$\Cr_2(\bfk)$, 
\[
 \textrm{ either }\quad h(H_{w_H}(\epsilon))=H_{w_H}(\epsilon)\quad\textrm{ or }\quad h(H_{w_H}(\epsilon))\cap H_{w_H}(\epsilon)=\emptyset.
\]
\end{thm}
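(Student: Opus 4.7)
The plan is to translate pairwise disjointness of the horoballs into a lower bound for the intersection number $(h_\bullet w)\cdot w$, where $w$ stands for $w_J$ or $w_H$. Two ingredients combine: a Cauchy--Schwarz type estimate in $\HH_\infty$ and the integrality of the Picard--Manin lattice.

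First I would prove the geometric criterion: for any two isotropic vectors $w,w'$ in the forward cone of $\ZC(\P^2_\bfk)$ and any $v\in\HH_\infty$,
\[
(v\cdot w)(v\cdot w') \;\geq\; \tfrac{1}{2}(w\cdot w').
\]
To see it, restrict to the $2$-plane $P$ spanned by $w,w'$, which has signature $(1,1)$ when $w\cdot w'>0$, and decompose $v=v_P+v_\perp$; since $P^\perp$ is negative semi-definite, $v_P^2\geq v^2=1$, and writing $v_P=aw+bw'$ in the null basis gives $2ab(w\cdot w')\geq 1$, so $(v\cdot w)(v\cdot w')=ab(w\cdot w')^2\geq\tfrac{1}{2}(w\cdot w')$. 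The minimum is attained at $v=(w+w')/\sqrt{2(w\cdot w')}\in P$; consequently $H_w(\epsilon)\cap H_{w'}(\epsilon)\neq\emptyset$ if and only if $w\cdot w'<2\epsilon^2$.

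Specialising $w'=h_\bullet w$, so that $h(H_w(\epsilon))=H_{h_\bullet w}(\epsilon)$, the theorem reduces to showing that $h_\bullet w\neq w$ forces $(h_\bullet w)\cdot w\geq 2\epsilon^2$. At this point I use that $\Bir(\P^2_\bfk)$ preserves the integer Picard--Manin lattice $\Z\bfe_0\oplus\bigoplus_q\Z\bfe(q)$, so $(h_\bullet w)\cdot w$ is a non-negative integer. Two isotropic vectors in the forward cone whose pairing vanishes must be proportional (a totally isotropic subspace in signature $(1,\infty)$ has dimension at most one), and because both $w_J=\bfe_0-\bfe(q_1)$ and $w_H=3\bfe_0-\sum_{i=1}^9\bfe(q_i)$ are primitive lattice vectors while $h_\bullet$ is a lattice automorphism, any such proportionality is an equality. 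Therefore $h_\bullet w\neq w$ implies $(h_\bullet w)\cdot w\geq 1>2\epsilon^2$ whenever $\epsilon<1/\sqrt{2}$; since $\epsilon_J=(\sqrt{3}-1)/2$ and $\epsilon_H$ are both strictly less than $1/\sqrt{2}$, the desired disjointness follows in both cases.

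The main obstacle is the integrality of the action: for every $h\in\Cr_2(\bfk)$ one must verify that $h_\bullet w$ remains a finite integer combination of $\bfe_0$ and finitely many exceptional classes $\bfe(q)$. The formulas for $g'_\bullet\bfe_0$ and $g'_\bullet\bfe(q_1)$ recalled in \S\ref{par:Jonquieres-twists} settle this for $w_J$, and an analogous unpacking of the Halphen case in \S\ref{par:Halphen-twists}---together with the fact that the fibre class is integral on the resolving surface---handles $w_H$. Primitivity of the image then follows from $h_\bullet$ being a lattice automorphism, and the proof concludes.
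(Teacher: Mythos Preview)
Your argument is correct and takes a genuinely different route from the paper. The paper proceeds by direct computation: it writes $h_\bullet w = m\bfe_0-\sum_i r_i\bfe_i$ in coordinates, assumes a point $u$ lies in both horoballs, and chains several Cauchy--Schwarz estimates (first to bound $\alpha_0$ from below, then to bound the ``defect'' variables $s_1$ or $S$) until a numerical contradiction with $\epsilon<\epsilon_J$ or $\epsilon<\epsilon_H$ emerges. Your approach replaces all of this by the single clean inequality $(v\cdot w)(v\cdot w')\geq\tfrac12(w\cdot w')$, valid for any $v\in\HH_\infty$ and forward isotropic $w,w'$, and then invokes integrality of $(h_\bullet w)\cdot w$ to conclude. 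This is both shorter and strictly stronger: it yields disjointness for every $\epsilon<1/\sqrt{2}$, whereas the paper's constants $\epsilon_J\simeq 0.366$ and $\epsilon_H\simeq 0.351$ are artifacts of the estimates used and are not optimal.

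One remark on your final paragraph: the references you give for integrality are not quite the right ones. The formulas in \S\ref{par:Jonquieres-twists} describe $g'_\bullet\bfe_0$ and $g'_\bullet\bfe(q_1)$ only for Jonqui\`eres transformations $g'$ that already fix the pencil through $q_1$; they say nothing about an arbitrary $h\in\Cr_2(\bfk)$. What you actually need is the general fact that the action $h\mapsto h_\bullet$ preserves the integral Picard--Manin lattice $\ZZ(\P^2_\bfk)=\Z\bfe_0\oplus\bigoplus_q\Z\bfe(q)$ for every $h$, which is built into the construction recalled in \S\ref{par:Picard-Manin} (the action is defined via pull-back and push-forward of integral divisor classes through a common resolution). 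The paper uses this without comment when it writes $h_\bullet w=m\bfe_0-\sum_i r_i\bfe_i$ with $r_i\in\Z^+$; you may simply cite the same sources. With that adjustment your argument is complete.
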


\subsection{Proof of the first assertion}

\subsubsection{}
For simplicity, we write $w$ instead of $w_J$.
Let $h$ be an element of $\Cr_2(\bfk)$. If $h_\bullet$ fixes the line $\R_+ w$, then it fixes $w$ and its dynamical degree
is equal to $1$; thus, $h$ fixes the horoballs $H_{w}(\epsilon)$. We may therefore assume that $h_\bullet$ 
does not fix $w$. Write 
\begin{equation}\label{eq:h-e0-e1}
h_\bullet(w)=h_\bullet(\bfe_0-\bfe_1)= m \bfe_0- \sum_i r_i \bfe_i
\end{equation}
for some multiplicities $r_i$ in $\Z^+$. Since $w^2=0$, we get
\begin{equation}
m^2=\sum_i r_i^2.
\end{equation}
For later purpose, we shall write $r_1=m-s_1$ for some integer $s_1\geq 0$. Then, 
\begin{equation}\label{eq:s1-and-rj2}
s_1^2+\sum_{j\geq 2} r_j^2=2ms_1.
\end{equation}
\begin{rem}
We have $h_\bullet(w)=w$ if and only if $m=1$ and $r_1=1$, if and only if $s_1=0$. Indeed, 
if $s_1=0$, then the last equation implies that all $r_j$ vanish for $j\geq 2$. Hence, 
$h_\bullet(w)=mw$ for some $m\geq 1$, $h$ is parabolic, and $m$ must be equal to the dynamical degree of $h$, 
so that $m=1$. 
\end{rem}

\subsubsection{}
Assume that $h_\bullet(H_{w}(\epsilon))$ intersects $H_{w}(\epsilon)$. Then, there exists a point $u$ in the
intersection. Write 
\begin{equation}
 u=\alpha_0e_0-\sum_i \alpha_i e_i.
\end{equation}
By definition of $H_w(\epsilon)$, we have 
$0  <   w\cdot u  <  \epsilon $ and $0  <   h_\bullet(w)\cdot u  <  \epsilon$, i.e. 
\begin{equation}\label{eq:ineq-h-horoball}
0  <   \alpha_0-\alpha_1    <  \epsilon 
\quad \textrm{and} \quad
0  <   m\alpha_0 -\sum_i r_i \alpha_i  <  \epsilon
\end{equation}
We shall write $\alpha_1=\alpha_0-\tau $
with $0< \tau < \epsilon$. Since $u\cdot e_0>0$ we know that $\alpha_0>0$, and since $u^2=1$ we 
have 
\begin{equation}\label{eq:sum-alphai2}
 \sum_i \alpha_i^2 = \alpha_0^2-1,
\end{equation}
and therefore 
\begin{equation}\label{eq:tau-and-alphaj2}
\tau^2+\sum_{j\geq 2} \alpha_j^2=2\alpha_0\tau - 1. 
\end{equation}

\subsubsection{}

In a first step, we prove a lower estimate for $\alpha_0$. By Equation~\eqref{eq:ineq-h-horoball},
\begin{equation}
m\alpha_0 < \epsilon+ \sum_i \alpha_ir_i.
\end{equation}
Apply Cauchy-Schwartz inequality and use Equations~\eqref{eq:h-e0-e1} and~\eqref{eq:sum-alphai2} to obtain 
\begin{equation}
m\alpha_0 < \epsilon+  (\sum_i\alpha_i^2)^{1/2}(\sum_ir_i^2)^{1/2} = \epsilon+ (\alpha_0^2-1)^{1/2}(m^2)^{1/2}.
\end{equation}
This gives 
\begin{equation}
m\alpha_0 (1-(1-1/\alpha_0^2)^{1/2})< \epsilon.
\end{equation}
Then, remark that $(1-t)^{1/2}\leq 1-t/2$, to deduce 
$1-(1-1/\alpha_0^2)^{1/2}\geq \frac{1}{2\alpha_0^2}$, and
inject this relation in the previous inequality to get 
\begin{equation}
\frac{m}{2\epsilon}< \alpha_0.
\end{equation}

\subsubsection{}

Isolate  $r_1\alpha_1$ in Equation~\eqref{eq:ineq-h-horoball}, i.e. write
$m\alpha_0 - r_1\alpha_1 - \sum_{j\geq 2} \alpha_j r_j< \epsilon$,
to obtain
\begin{equation}
s_1\alpha_0 + m\tau< \epsilon +  s_1\tau + \sum_{j\geq 2} \alpha_j r_j .
\end{equation}
Then, remark that $m\tau \geq 0$, and apply  Cauchy-Schwartz estimate to the vectors $(s_1, (r_j)_{j\geq 2})$
and $(\tau, (\alpha_j)_{j\geq 2})$; from Equations~\eqref{eq:tau-and-alphaj2} and~\eqref{eq:s1-and-rj2} we get
\begin{equation}
s_1\alpha_0 < \epsilon + (2\alpha_0\tau -1)^{1/2} (2m s_1)^{1/2} 
\end{equation}
\begin{equation}
< \epsilon + 2 (\alpha_0\epsilon)^{1/2} (ms_1)^{1/2}
\end{equation}
because $0< \tau < \epsilon$. This gives 
\[
\left(\frac{s_1}{m}\alpha_0\right)^{1/2}< \frac{\epsilon}{(ms_1\alpha_0)^{1/2}} + 2(\epsilon)^{1/2} 
\]
and the inequality $\alpha_0> m/(2\epsilon)$ gives 
\[
\left(\frac{s_1}{2\epsilon}\right)^{1/2} < \frac{\epsilon^{3/2}}{(m^2s_1/2)^{1/2}} + 2(\epsilon)^{1/2}.
\]
Since $s_1\geq 1$ and $m\geq 1$ we get $(\sqrt{2})^{-1} < \sqrt{2} \epsilon^{2} + 2\epsilon $, in contradiction with $\epsilon < \epsilon_J$.

\subsection{Proof of the second assertion}

The proof follows  the same lines. 

\subsubsection{}
For simplicity, we write $w$ instead of $w_H$.
Let $h$ be an element of $\Cr_2(\bfk)$. If $h_\bullet$ the line $\R w$, it fixes also the class $w$, and its dynamical degree
is equal to $1$; thus, $h_\bullet$ fixes the horoballs $H_{w}(\epsilon)$. Thus, we may assume that $h_\bullet$ 
does not fix $w$. Write 
\begin{equation}
h_\bullet(w)= m e_0- \sum_i r_i e_i
\end{equation}
for some $r_i$ in $\Z^+$. Since $w^2=0$, we get 
\begin{equation}
m^2=\sum_i r_i^2.
\end{equation}
For later purpose, we shall write $r_i=(m/3)-s_i$ for each index $1\leq i \leq 9$. Then
\begin{equation}
\sum_{i=1}^9 s_i^2+\sum_{j\geq 10} r_j^2=(2/3)m S.
\end{equation}
with 
\begin{equation}
S:=\sum_{i=1}^9s_i.
\end{equation}
\begin{rem}
We have $h_\bullet(w)=w$ if and only if $m=3$ and $r_i=1$ for $1\leq i \leq 9$. This is equivalent 
to $S=0$. Indeed, if $S=0$, then the last inequality implies that all multiplicities $r_j$ vanish
for $j\geq 10$, and all $s_i$ vanish for $1\leq i\leq 9$. Thus, $h_\bullet(w)=mw$, $m$ must be equal to the dynamical degree of $h$, 
and $m=1$. 
\end{rem}

\subsubsection{}
Assume that $h_\bullet(H_{w}(\epsilon))$ intersects $H_{w}(\epsilon)$. Then, there exists a point $u$ in the
intersection. Write 
$u=\alpha_0e_0-\sum_i \alpha_i e_i$.
By definition, we have $0  <   w\cdot u  <  \epsilon$ and $0  <   h_\bullet(w)\cdot u  <  \epsilon$, i.e. 
\begin{equation}\label{eq:Halp-alphai2}
0  <   3\alpha_0-\sum_{i=1}^9 \alpha_i    <  \epsilon 
\quad\textrm{ and }\quad
0  <   m\alpha_0 -\sum_i r_i \alpha_i  <  \epsilon
\end{equation}
We shall write $\alpha_i=(1/3)\alpha_0-\tau_i$ for $1\leq i \leq 9$, and 
$T=\sum_{i=1}^9 \tau_i$. Then, 
\begin{equation}
0< T < \epsilon.
\end{equation}
Since $u\cdot e_0>0$ we know that $\alpha_0>0$, and since $u^2=1$ we 
have 
\begin{equation}
 \sum_i \alpha_i^2 = \alpha_0^2-1.
\end{equation}
Thus,
\begin{equation}
\sum_{i=1}^9\tau_i^2+\sum_{j\geq 10} \alpha_j^2=(2/3)\alpha_0T - 1. 
\end{equation}

\subsubsection{}

The following lower estimate is obtained as in the case $w=w_J$:
\begin{equation}
\frac{m}{2\epsilon}< \alpha_0.
\end{equation}

\subsubsection{}

Now, isolate the terms $r_i\alpha_i$, for $i$ between $1$ and $9$, in Equation~\eqref{eq:Halp-alphai2}:
\begin{equation}
m\alpha_0 - \sum_{i=1}^9 r_i\alpha_i - \sum_{j\geq 10} \alpha_j r_j< \epsilon
\end{equation}
We obtain
\begin{equation}
(m-(1/3)\sum_{i=1}^9 r_i)\alpha_0 + \sum_i r_i \tau_i  <  \epsilon  + \sum_{j\geq 10} \alpha_j r_j 
\end{equation}
i.e. 
\begin{equation}
(1/3)S \alpha_0 + (1/3)mT < \epsilon + \sum_{i=1}^9 s_i \tau_i + \sum_{j\geq 10} \alpha_j r_j
\end{equation}
Apply again, the fact that $mT \geq 0$ and Cauchy-Schwartz estimate: 
\begin{equation}
(1/3)S \alpha_0 -\epsilon  <  ((2/3)\alpha_0 T -1)^{1/2} ((2/3) m S)^{1/2} <  (2/3) (\alpha_0\epsilon)^{1/2} (mS )^{1/2}
\end{equation}
because $0< T < \epsilon$. This gives 
\begin{equation}
\frac{1}{3}\left(\frac{S}{m}\alpha_0\right)^{1/2}< \frac{\epsilon}{(mS\alpha_0)^{1/2}} + \frac{2}{3}(\epsilon)^{1/2} 
\end{equation}
and the inequality $\alpha_0> m/(2\epsilon)$ implies 
\begin{equation}
\frac{1}{3}\left(\frac{S}{2\epsilon}\right)^{1/2} < \frac{\epsilon^{3/2}}{(m^2 S /2)^{1/2}} + \frac{2}{3}(\epsilon)^{1/2} 
\end{equation}
Since $S\geq 1$ and $m\geq 1$, we get $(3\sqrt{2})^{-1}< \sqrt{2} \epsilon^2+(2/3)\epsilon$, in contradiction with $\epsilon < \epsilon_H$.

\subsection{Consequence: Halphen twists are not distorted}\label{par:halphen-undistorted}

Let $h\in \Cr_2(\bfk)$ be a Halphen twist. After conjugacy, we may assume
that $h_\bullet$ preserves the class $w_H$ associated to some Halphen pencil. 
We know from Section~\ref{par:Halphen-twists} that the degree growth of $h$ is quadratic, 
with 
\begin{equation}
\deg_{\bfe_0}(h^n)\geq (\tau(h)n)^2.
\end{equation}
Since $\deg_{\bfe_0}(h^n)$ 
is equal to $\cosh(\dist(h_\bullet\bfe_0,\bfe_0))$, we obtain the lower bound
\begin{equation}
\log \dist(h_\bullet\bfe_0,\bfe_0)) \geq 2 \log(n) - 2\log(\tau(h)).
\end{equation}
Set $\HH_m=\HH_\infty(\P^2_\bfk)$, $f=g_\bullet$, $G=\Cr_2(\bfk)$, $B=H_{w_H}(\epsilon_H/2)$, and $C=2$. By Theorem~\ref{thc} if $g$ is an element of $G$ then 
$g(B)=B$ or $g(B)\cap B=\emptyset$. Thus, we may apply Theorem~\ref{thb} to $f=h_\bullet$
and we get the desired result: $h$ is undistorted in $\Cr_2(\bfk)$.

\subsection{Non-rational surfaces}

The previous paragraph makes use of the explicit description of Halphen pencils in $\P^2_\bfk$. Here, we consider 
a smooth projective surface $X$, over the algebraically closed field $\bfk$, and assume that 
\begin{itemize}
\item $X$ is not rational;
\item $f$ is a birational transformation of $X$ with $\deg(f^n)\simeq n^2$ (we shall say that $f$ is a Halphen twist of $X$). 
\end{itemize}
Then, from Theorem~\ref{thm:Elements}, we know that $f$ preserves a unique pencil of genus $1$. 
\begin{lem}
The Kodaira dimension of $X$ is equal to $0$ or $1$. The surface $X$ has a unique minimal model $X_0$, and
$\Bir(X_0)=\Aut(X_0)$. 
\end{lem}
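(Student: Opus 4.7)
The plan is to rule out the extremal Kodaira dimensions $\kod(X)=2$ and $\kod(X)=-\infty$, then invoke classical surface theory for the last two claims.

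First I would exclude $\kod(X)=2$: a surface of general type has finite birational automorphism group, whereas $f$ has infinite order since $\deg_{\bfh}(f^n)\simeq n^2\to\infty$. To rule out $\kod(X)=-\infty$, I apply Theorem~\ref{thm:Elements}(2) to conjugate $f$ on a suitable smooth model $Y$ birational to $X$ so that it becomes an automorphism $g$ (after further blow-ups of indeterminacy points if needed) preserving a genus~$1$ (or quasi-elliptic) fibration $\pi\colon Y\to B$. Suppose for contradiction that $\kod(Y)=-\infty$. Since $Y$ is not rational, $Y$ is birationally ruled over a smooth curve $C$ of genus $\geq 1$, and its Albanese morphism $\mathrm{alb}\colon Y\to J(C)$ factors through this ruling, with all fibers rational. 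The fibers of $\pi$ have arithmetic genus $1$, hence cannot be contained in a single Albanese fiber; studying the induced morphism from a general fiber of $\pi$ to $J(C)$ together with the classification of elliptic fibrations on non-rational ruled surfaces forces $C$ to be elliptic and $Y$ to be birational to $E\times\P^1$ with $\pi$ corresponding to the projection to the second factor.

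Next I work directly on $E\times\P^1$. The first projection is the Albanese morphism, hence is preserved by every birational self-map of $E\times\P^1$ up to an element of $\Aff(E)$ (since smooth curves of positive genus are birationally rigid). Combined with $g$ preserving the genus~$1$ fibration $\pi$ up to some $\alpha\in\Aut(\P^1)$, this forces
\[
g(e,p)=(\sigma(e),\alpha(p)),\qquad \sigma\in\Aff(E),\; \alpha\in\Aut(\P^1).
\]
Such a $g$ is a biregular automorphism lying in the finite-dimensional algebraic group $\Aff(E)\times\Aut(\P^1)$, so $\deg(g^n)$ stays bounded, contradicting the quadratic growth of $\deg(f^n)$. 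Hence $\kod(X)\in\{0,1\}$.

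Finally, once $\kod(X)\geq 0$, the uniqueness of the minimal model $X_0$ of $X$ is a classical result of surface theory (the minimal model contains no $(-1)$-curve, and any two minimal surfaces of the same non-negative Kodaira dimension that are birational to each other are already isomorphic). The equality $\Bir(X_0)=\Aut(X_0)$ is then the well-known statement that every birational map between minimal surfaces of non-negative Kodaira dimension extends to a biregular isomorphism, applied to self-maps of $X_0$. The main obstacle in this proof is the case $\kod(X)=-\infty$: although intuitively the conclusion is clear, one needs to show carefully that a non-rational ruled surface carrying a birational self-map of quadratic degree growth must admit, up to birational modification, the product structure $E\times\P^1$ with the standard projection as genus~$1$ fibration, and this pins down $g$ using the Albanese morphism.
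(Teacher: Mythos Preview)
Your treatment of $\kod(X)=2$ and of the last two assertions matches the paper. The difference lies in how you exclude $\kod(X)=-\infty$.

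The paper's argument is a one-liner using the hyperbolic geometry already in play: since $X$ is ruled and non-rational, the ruling is unique and preserved by all of $\Bir(X)$; so $f$ preserves \emph{two} pencils, the ruling and the genus~$1$ pencil coming from the Halphen-twist structure. These give two distinct $f_\bullet$-invariant isotropic classes in $\ZZ(X)$, hence two boundary fixed points of $f_\bullet$ on $\partial\HH_\infty(X)$, contradicting parabolicity. No computation on a specific model is needed.

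Your route---passing to $E\times\P^1$ and pinning down $g$ as a product automorphism---can be made to work, but as written it has a gap. The sentence ``the classification of elliptic fibrations on non-rational ruled surfaces forces \ldots\ $\pi$ corresponding to the projection to the second factor'' is doing real work that you have not justified. You need this identification to conclude $g(e,p)=(\sigma(e),\alpha(p))$; without it, preservation of the Albanese fibration only gives $g(e,p)=(\sigma(e),\beta_e(p))$ with $\beta\in\PGL_2(\bfk(E))$, and the second constraint need not force $\beta_e$ constant unless $\pi$ really is $\pi_2$. One can fill this in (for instance by observing that $\Num(E\times\P^1)$ has rank~$2$, so the only isotropic rays are the two fiber classes, and the genus~$1$ one cannot be the rational ruling), but notice that once you say ``two invariant isotropic rays'' you have essentially rediscovered the paper's argument. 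Your detour through explicit models and Albanese is then unnecessary.
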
 
\begin{proof}
A Halphen twist has infinite order, thus $\Bir(X)$ is infinite, and the Kodaira dimension of $X$ is $<2$. 
If it is equal to $-\infty$, then $X$ is a ruled surface, and since $X$ is not rational, the ruling is unique
and $\Bir(X)$-invariant. Thus, $f$ must preserve two pencils. These two rational fibrations determine two $f_\bullet$-invariant isotropic classes in $\ZZ(X)$, 
in contradiction with the fact that $f_\bullet$ is parabolic. This proves the first assertion. 
The second one is a well-known consequence of the first. 
\end{proof}

We can therefore conjugate $f$ to an automorphism $f_0$ of $X_0$, and assume that $\Bir(X_0)=\Aut(X_0)$. 
Thus, the distortion of $f$ in $\Bir(X)$ is now equivalent to the distortion of $f_0$ in $\Aut(X_0)$. Instead of 
looking at the infinite dimensional vector space $\ZZ(X)$, we can look at the action of $\Aut(X_0)$ on the
N\'eron-Severi group $\Num(X_0)$.

 Identify $\Num(X_0)$ to $\Z^r$, where $r$ is the Picard number of $X$, and 
denote by $q_0$ the intersection form on $\Num(X_0)$. 
Then, the image of $\Aut(X_0)$ in $\GL(\Num(X))$ is a subgroup of the orthogonal group $O^+(q_0;\Z)$ preserving the 
 hyperbolic space $\HH_r\subset \Num(X_0;\R)$ defined by $q_0$. The quotient $V=\HH_r/O^+(q_0;\Z)$ is a hyperbolic orbifold, and 
 the  fixed point $\xi$ of $f_0$ in $\partial\HH_r$ gives a cusp of $V$. A sufficiently small horoball $B$ centered at $\xi$ determines
 a neighborhoods of this cusp (see~\cite{Ratcliffe}). Thus, if $g$ is an element of $O^+(q_0;\Z)$, then $g(B)=B$ or 
 $g(B)\cap B=\emptyset$, as in Theorem~\ref{thm:disjoint-horoballs}. From Theorem~\ref{thb}, we deduce that $f$ is undistorted. 
 We have proved:
 
 \begin{thm}\label{thm:Halphen-Undistorted}
 Let $\bfk$ be an algebraically closed field. Let $X$ be a smooth projective surface, defined over $\bfk$. 
 If $f\in \Bir(X)$ is a Halphen twist (i.e. $\deg(f^n)\simeq n^2$), then $f$ is not distorted in $\Bir(X)$.
 \end{thm}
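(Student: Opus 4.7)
The plan is to reduce to a finite-dimensional hyperbolic action and then invoke Theorem~\ref{thb} with constant $C=2$. Since $f$ is a Halphen twist, it has infinite order and is parabolic, and by the preceding lemma $X$ admits a unique minimal model $X_0$ with $\Bir(X_0)=\Aut(X_0)$. Choosing a birational map $X\dasharrow X_0$ identifies $\Bir(X)$ with $\Bir(X_0)=\Aut(X_0)$ and sends $f$ to some $f_0\in\Aut(X_0)$; because distortion depends only on this group up to isomorphism, it suffices to prove that $f_0$ is undistorted in $\Aut(X_0)$.

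The crucial gain in passing to $X_0$ is that $f_0$ is a regular automorphism, so the natural representation factors as $\Aut(X_0)\to O^+(q_0;\Z)\subset\GL(\Num(X_0))$, where $q_0$ is the integral intersection form of signature $(1,r-1)$ on $\Num(X_0)\cong\Z^r$. The image $\Gamma$ is a \emph{discrete} subgroup of $\Isom(\HH_r)$, where $\HH_r\subset\Num(X_0;\R)$ is the finite-dimensional hyperbolic space $\{q_0=1\}$ (positive sheet). The quadratic degree growth $\deg_\bfh(f_0^n)\simeq n^2$, together with $\cosh\dist(\bfh,(f_0)^n_\bullet\bfh)=\deg_\bfh(f_0^n)$, gives $\dist(\bfh,(f_0)^n_\bullet\bfh)\ge 2\log n - C'$ for $n$ large, which is hypothesis~(i) of Theorem~\ref{thb} with $C=2$.

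The main step, and the one that was unavailable in the infinite-dimensional Picard--Manin setting, is hypothesis~(ii): the existence of an invariant horoball at the parabolic fixed point $\xi\in\partial\HH_r$ of $(f_0)_\bullet$ that is either preserved or moved off itself by every element of $\Gamma$. Here I would use the standard theory of cusps of geometrically finite hyperbolic orbifolds: since $\Gamma$ acts properly discontinuously on $\HH_r$, the point $\xi$ descends to a cusp of the quotient orbifold $V=\HH_r/\Gamma$, and Margulis-type cusp neighborhoods (see Ratcliffe) produce a sufficiently small horoball $B$ centered at $\xi$ whose $\Gamma$-translates are pairwise equal or disjoint. This is the main obstacle conceptually, though in finite dimension it is standard; in the rational case, by contrast, no such direct cusp argument was available and one needed the explicit numerical estimates of Theorem~\ref{thm:disjoint-horoballs}. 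With (i) and (ii) in hand, Theorem~\ref{thb} yields distortion at most $n^{2/C}=n$, so $f_0$ is undistorted in $\Aut(X_0)$, and transporting back along the conjugacy gives the theorem.
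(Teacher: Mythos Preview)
Your argument is essentially the paper's own, with one organizational slip worth flagging. The theorem is stated for \emph{all} smooth projective surfaces, but the ``preceding lemma'' you invoke (unique minimal model $X_0$ with $\Bir(X_0)=\Aut(X_0)$) is proved under the hypothesis that $X$ is \emph{not rational}. For rational $X$ one has $\Bir(X)\simeq\Cr_2(\bfk)$, there is no minimal model with $\Bir=\Aut$, and no reduction to a finite-dimensional N\'eron--Severi action is available; that case is treated separately in Section~\ref{par:halphen-undistorted} via the explicit horoball estimate of Theorem~\ref{thm:disjoint-horoballs} in $\HH_\infty$, exactly as you remark at the end. So the content is right, but the case split should be stated up front rather than left as a parenthetical aside.

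A minor technical difference: the paper runs the cusp argument for the full arithmetic group $O^+(q_0;\Z)$ rather than for the image $\Gamma$ of $\Aut(X_0)$. Since $O^+(q_0;\Z)$ is a lattice, $\HH_r/O^+(q_0;\Z)$ has finite volume and the rational isotropic direction $\xi$ is automatically a cusp; the disjoint-horoball property for the subgroup $\Gamma$ then follows a fortiori. Your route via discreteness of $\Gamma$ and Margulis-type cusp neighborhoods also works in finite dimension, but the arithmetic-lattice formulation is a bit cleaner and avoids having to argue that $\xi$ is genuinely a cusp of $\HH_r/\Gamma$.
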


%%%%%%%%%%%%%%%%%%%%%%%%%%%
%%%%%%%%%%%%%%%%%%%%%%%%%%%
\section{Jonqui\`eres twists are undistorted}\label{par:Jonq-Non-Disto}
%%%%%%%%%%%%%%%%%%%%%%%%%%%
%%%%%%%%%%%%%%%%%%%%%%%%%%%

The argument presented in Section~\ref{par:halphen-undistorted} to show that Halphen twists are undistorted 
is not sufficient for Jonqui\`eres twists; it only gives a quadratic upper bound on the distortion function.
As we shall see, the following result follows from \cite{Blanc-Deserti:2015}. 

\begin{thm}\label{thm:Jonquieres-Undistorted}
Let $\bfk$ be an algebraically closed field, and let $X_\bfk$ be a projective surface.
Let $f$ be an element of $\Bir(X)$. If $f$ is a  Jonqui\`eres twist (i.e. if the sequence $\deg(f^n)$ 
grows linearly) then $f$ is 
not distorted in $\Bir(X)$. 
\end{thm}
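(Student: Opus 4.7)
The reason the horoball argument from Section~\ref{par:halphen-undistorted} only gives a quadratic bound on distortion here is that linear degree growth $\deg(f^n) \simeq n$ translates into the logarithmic displacement $\dist(f^n \bfe_0, \bfe_0) \simeq \log n$, i.e.\ the constant $C$ in Theorem~\ref{thb} is only $1$. To remove this loss I would replace the logarithmic bound by a genuinely additive conjugation-invariant length function on $\Bir(X)$ that grows \emph{linearly} along iterates of $f$. The natural candidate, as indicated in \S\ref{par:Jonquieres-twists}, is the total base-point count $\bp \colon \Cr_2(\bfk) \to \N$ (including infinitely near base points) introduced and studied by Blanc and D\'eserti.

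First I would reduce to $X = \P^2_\bfk$. By Theorem~\ref{thm:Elements}, after a birational change of model $f$ permutes the fibers of a rational fibration $\pi \colon X \dashrightarrow C$. If $C \cong \P^1_\bfk$, then $X$ is rational and $\Bir(X) \cong \Cr_2(\bfk)$, so the problem reduces to the plane. Otherwise $X$ is a ruled surface over a curve $C$ of positive genus, the rational pencil $\pi$ is unique, every element of $\Bir(X)$ preserves it, and $\Bir(X)$ embeds into $\Aut(C) \ltimes \PGL_2(\bfk(C))$; in this non-rational setting I expect that no element actually produces linear degree growth as a birational self-map of $X$ (iterates of a fiberwise $y \mapsto y + b(c)$ have the same indeterminacy as $b$), so that the statement is vacuous, but if not, a parallel argument using a ``number of poles'' length on $\PGL_2(\bfk(C))$ should go through.

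For $X = \P^2_\bfk$, the argument rests on two properties of $\bp$ that I would extract from \cite{Blanc-Deserti:2015}. The first is a near-subadditivity $\bp(gh) \leq \bp(g) + \bp(h) + K$ for an absolute constant $K$, reflecting the fact that each base point of $gh$ either is a base point of $h$ or lies above a base point of $g$. The second is a linear lower bound $\bp(f^n) \geq cn$ for some $c = c(f) > 0$ and all large $n$, which is the precise content of the Blanc--D\'eserti interpretation of $\mu(f)$ as an asymptotic base-point count recalled in \S\ref{par:Jonquieres-twists}. Given these, fix a finitely generated subgroup $H$ of $\Bir(X)$ containing $f$, a finite symmetric generating set $S$, and set $M = \max_{s \in S} \bp(s) + K$. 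If $f^n = s_1 \cdots s_\ell$ with $s_i \in S$, iterated subadditivity yields $\bp(f^n) \leq M\ell$, and combined with the lower bound this gives $cn \leq M\ell$, hence $\delta^H_f(\ell) \preceq \ell$; thus $f$ is undistorted.

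The main obstacle is the uniform near-subadditivity: controlling how infinitely near base points of $g$ and $h$ combine under composition is delicate, and this is precisely the technical heart of \cite{Blanc-Deserti:2015}. Their treatment via the Noether--Castelnuovo generating system ($\PGL_3(\bfk)$ together with pencil-preserving maps) makes $\bp$ transparent on each family of generators and is what allows a uniform error term; by comparison, the linear lower bound along the iterates of $f$ is essentially a restatement of the degree-growth formulas recalled in \S\ref{par:Jonquieres-twists}.
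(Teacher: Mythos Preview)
Your approach for $\P^2_\bfk$ is essentially the paper's: use the base-point count $\bp$ of Blanc--D\'eserti as a subadditive length and invoke their result that its stable limit $\alpha(f)=\lim_n \bp(f^n)/n$ equals $\mu(f)>0$ for a Jonqui\`eres twist, so that the stable length of $f$ is positive (Lemma~\ref{lem:stable-length}). Two corrections, though. First, $\bp$ is \emph{genuinely} subadditive, $\bp(gh)\le\bp(g)+\bp(h)$, with no additive constant $K$: each base point of $gh$ is either a base point of $h$ or the image, under the minimal resolution of $h$, of a base point of $g$. Second, you have the difficulty backwards. Subadditivity is the easy part; the substantial input from \cite{Blanc-Deserti:2015} is precisely the positivity of $\alpha(f)$ (equivalently, that a Jonqui\`eres twist is never conjugate to an automorphism of any smooth model), not a Noether--Castelnuovo argument for subadditivity.

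The real gap is the non-rational case. Your guess that it is vacuous is wrong: on $X=C\times\P^1_\bfk$ with $g(C)\ge 1$, the maps $(x,y)\mapsto(x,Q(x)\,y)$ with $Q\in\bfk(C)^\times$ non-constant are Jonqui\`eres twists, since $f^n(x,y)=(x,Q(x)^n y)$ has degree growing linearly in $n$. (Your additive example $y\mapsto y+b(x)$ is misleading; the multiplicative ones already show the case has content.) The paper handles this by extending $\bp$ to $\Bir(X)$ and then computing directly on $C\times\P^1_\bfk$: writing $\sqrt{2}\,f_\bullet\bfe_0=\bfh+d\bfv-\sum_i a_i\bfe_i$, the $\Bir(X)$-invariance of the canonical class gives $\sum a_i=2(d-1)$, while self-intersection gives $\sum a_i^2=2(d-1)$, so every $a_i\in\{0,1\}$ and $\bp(f^n)=2(\deg(f^n)-1)$. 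Hence $\alpha(f)=2\lim_n \deg(f^n)/n>0$ and the same stable-length argument applies. Your ``number of poles'' suggestion is essentially this computation, but it needs to be carried out, not waved at.
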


\subsection{In the Cremona group}
 
We first describe the proof when $X$ is the projective plane.  Denote by 
$\bp\colon \Cr_2(\bfk) \to \Z_+$
the function {\bf{number of base-points}}:  $\bp(f)$ is the
number of base-points of the homaloidal net of $f$, i.e. of the linear system
of curves obtained by pulling-back the system of lines in $\P^2$. 
Indeterminacy points are examples of base-points, but the base-point set
may also include infinitely near points. The number of base-points is also 
the number of blow-ups needed to construct a minimal resolution of the
indeterminacies of $f$. If $f_\bullet$ denotes the action of $f$ on the
Picard-Manin space, and $e_0$ is the class of a line, 
then 
\begin{equation}
(f^{-1})_\bullet e_0=de_0-\sum_i m_i e(p_i)
\end{equation}
where $d$ is the degree of $f$ and $m_i$ is the multiplicity of the homaloidal
system $f^*{\mathcal{O}}(1)$ at the base-point $p_i$; thus, $\bp(f)$ is just the number
of classes   for which the multiplicity $m_i$ is positive. 
The number of base-points is non-negative, is subadditive, and is symmetric (see  \cite{Blanc-Deserti:2015}):
$\bp(f\circ g)\leq \bp(f)+\bp(g)$ and  $\bp(f)=\bp(f^{-1})$. 
As a consequence, the limit 
\begin{equation}
\alpha(f)=\lim_{n\to +\infty} \frac{1}{n}\bp(f^n)
\end{equation}
exists and is non-negative. It is symmetric, i.e. $\alpha(f^{-1})=\alpha(f)$, invariant under 
conjugacy, and it vanishes if $f$ is distorted, because if $f$ is distorted its stable length
vanishes (Lemma~\ref{lem:stable-length}) and this implies $\alpha(f)=0$ by the subadditivity of $\bp$. 

Blanc and D\'eserti prove that $\alpha(f)$ is a non-negative integer, 
and that it vanishes if and only if $f$ is conjugate to an automorphism by a birational 
map $\pi\colon X\dasharrow \P^2$. In particular, $\bp(f)>0$ for Jonqui\`eres twists
because  they are not conjugate to automorphisms; but the result of 
Blanc and D\'eserti is even more precise: {\sl{if $f$ is a Jonqui\`eres twist, $\alpha(f)$
co\"{\i}ncides with the integer $\mu(f)$  which was defined in Section~\ref{par:Jonquieres-twists}.}}
Theorem~\ref{thm:Jonquieres-Undistorted} follows from those results. 

\subsection{In $\Bir(X)$}

The definition of $\bp(f)$ extends to birational transformations of arbitrary smooth surfaces; again, 
its stable version $\alpha(f)$ is invariant under conjugacy, vanishes when $f$ is distorted, and may 
be interpreted as the number of terms in the decomposition $f^n\bfe_0={\mathbf{u}}_X+\sum_i a_i\bfe_i$
in the Picard-Manin space $\ZZ(X)=\Num(X) \oplus_i \Z\bfe_i$ (see Section~\ref{par:Picard-Manin}), where $\bfe_0$ is any ample class in $\Num(X)$. 
(The proofs of Blanc and D\'eserti extend directly to this general situation.) 

If $f\in \Bir(X)$ is a parabolic element with $\deg(f^n)\simeq n$, and if $X$ is not a rational 
surface, one can do a birational conjugacy to assume that $X$ is the product $C\times \P^1_\bfk$ of a curve of genus $g(C)\geq 1$ 
with the projective line. Then, $\Bir(X)$ preserves the projection $\pi\colon X\to C$, acting by automorphisms on the base. 

The N\'eron-Severi group of $X$ has rank $2$, and is generated by the class $\bfv$ of a vertical line $\{x_0\}\times\P^1_\bfk$ 
and by the class $\bfh$ of a horizontal section $C\times\{y_0\}$. The canonical class $\bfk_X$ is $2(g-1)\bfv-2\bfh$, where
$g$ is the genus of the curve $C$. Blowing-up $X$, the canonical class of the surfaces $X'\to X$ determines a limit 
\begin{equation}
{\tilde{\bfk}}=2(g-1)\bfv-2\bfh+\sum_i \bfe_i 
\end{equation}
where the $\bfe_i$ are the classes of all exceptional divisors, as in Section~\ref{par:Picard-Manin}. This limit is not an element of the Picard-Manin space
$\ZC(X)$, but it determines a linear form on the $\Z$-module $\ZZ(X)$; this form is invariant under the action of $\Bir(X)$ on $\ZZ(X)$. 

As an ample class, take $\bfe_0=\sqrt{2}^{-1} (\bfv+\bfh)$. This is an element of $\HH_\infty(X)$. If $f$ is an 
element of $\Bir(X)$, it preserves the class $\bfv$ of the fibers of $\pi\colon X\to C$; hence
$\sqrt{2}f_\bullet(\bfe_0)= \bfh+d\bfv -\sum a_i \bfe_i$ for some multiplicities $a_i\in \Z_+$.
Applied to $\sqrt{2}f_\bullet(\bfe_0)$, the invariance of the canonical 
class leads to the following constraint:
\begin{equation}
2(d-1)=\sum_i a_i.
\end{equation}
And the invariance of the intersection form gives 
\begin{equation}
2(d-1)=\sum_i a_i^2.
\end{equation}
Thus, $a_i=1$ or $0$, and there are exactly $2(d-1)$ non-zero terms in the sum $\sum_i a_i \bfe_i$. We get 
\begin{equation}
\sqrt{2}f_\bullet(\bfe_0)= \bfh+d\bfv -\sum_{i=1}^{2(d-1)} \bfe_i
\end{equation}
When $f$ is a Jonqui\`eres twist, then $\deg(f^n)\simeq n$, and the number $\bp(f^n)$ of terms in the sum also grows linearly, 
like $2\deg(f^n)$.  Thus, $\alpha(f)>0$, extending the result of Blanc and D\'eserti to all surfaces. This concludes the proof of Theorem~\ref{thm:Jonquieres-Undistorted}.

%%%%%%%%%%%%%%%%%%%%%%%%%%%%%%%%%%%%%%%%%%%%%%%%%%%%%%%%%%%%%%%%%%%%%%%%
%%%%%%%%%%%%%%%%%%%%%%%%%%%%%%%%%%%%%%%%%%%%%%%%%%%%%%%%%%%%%%%%%%%%%%%%
%%%%%%%%%%%%%%%%%%%%%%%%%%%%%%%%%%%%%%%%%%%%%%%%%%%%%%%%%%%%%%%%%%%%%%%%
\section{Appendix: two examples}\label{par:2Examples}
%%%%%%%%%%%%%%%%%%%%%%%%%%%%%%%%%%%%%%%%%%%%%%%%%%%%%%%%%%%%%%%%%%%%%%%%
%%%%%%%%%%%%%%%%%%%%%%%%%%%%%%%%%%%%%%%%%%%%%%%%%%%%%%%%%%%%%%%%%%%%%%%%
%%%%%%%%%%%%%%%%%%%%%%%%%%%%%%%%%%%%%%%%%%%%%%%%%%%%%%%%%%%%%%%%%%%%%%%%

\subsection{Baumslag-Solitar groups}

Fix a pair of integers $k, \ell \geq 2$. In the Baumslag-Solitar group $B_k=\langle t,x\, \vert \; txt^{-1}=x^k\rangle$, we have $\delta_x(n)\simeq\exp(n)$
(see~\cite{Gromov:AIIG}, \S~3.K1).
In the "double" Baumslag-Solitar group
\[
B_{k,\ell}=\langle t,x,y \, \vert \; txt^{-1}=x^k,xyx^{-1}=y^\ell\rangle,
\] 
we have $t^nxt^{-n}=x^{k^n}\in S^{2n+1}$ and $x^{k^n}yx^{-k^n}=y^{\ell^{k^n}}\in S^{4n+3}$; hence,
 $\delta_{y,S}(4n+3)\ge\ell^{k^n}$ and the distortion of $y$ in $B_{k,\ell}$ is at least doubly exponential. 
 In fact, we can check $\delta_y(n)\simeq\exp\exp(n)$ in $B_{k,\ell}$
as follows. Consider the homeomorphisms of the real lines $\R$ which are defined by $Y(s)=s+1$, $X(s)=\ell s$, 
and $T(s)={\mathrm{sign}}(s)\vert s\vert^k$; the relations satisfied by $t$, $x$ and $y$ in $B_{k,\ell}$ are 
also satisfied by $T$, $X$ and $Y$ in ${\mathsf{Homeo}}(\R)$: this gives a homomorphism from $B_{k,\ell}$
to ${\mathsf{Homeo}}(\R)$. If $f$ is any of the three homeomorphisms $T$, $X$ and $Y$ 
or their inverses, it satisfies $\vert f(s)\vert \leq \max(2\ell, \vert  s\vert^k)$. Thus, a recursion shows that every word $w$
of length $n$ in the generators is a homeomorphism satisfying $\vert w(0)\vert \leq (2\ell)^{k^n}$. Since $Y^m(0)=m$,
this shows that the distortion of $y$ is at most doubly exponential. 

\subsection{Locally nilpotent groups} Consider the group $M$ of upper triangular (infinite) matrices whose entries are indexed by the 
ordered set $\Q$ of rational numbers, the coefficients are rational numbers, and the diagonal coefficients are all equal 
to $1$:
\begin{enumerate}
\item $M$ is perfect (it coincides with it derived subgroup), and torsion free; 
\item $M$ is locally nilpotent (every finitely generated subgroup is nilpotent);
\item for every integer $d\geq 1$, the elementary matrix $U=\Id+ E_{0,1}$  is 
in the $d$-th derived subgroup of a finitely generated, nilpotent subgroup $N_d$ of $M$. 
\end{enumerate}
The first two assertions are described in \cite{Robinson} \S~6.2;  the last one follows from the following
two simple remarks: the elementary matrix $\Id+ E_{d,d+1}$ is in the center of the group of upper triangular 
matrices of $\SL_{d+1}(\Q)$; the translation $\alpha\mapsto \alpha-d$ is an order preserving permutation of  $\Q$, and 
this action determines an automorphism of the group $M$ that maps $\Id+ E_{d,d+1}$ to $U$.
Property~(3) implies that the distortion of $U$ in $N_d$ is $n^d$. This implies that the distortion of
$U$ in $M$ is at least $n^d$ for all $d$; but its distortion is polynomial in every finitely generated subgroup of $M$.

\bibliographystyle{plain}
\bibliography{references-disto}

\begin{thebibliography}{10}

\bibitem{Alberich}
Maria Alberich-Carrami\~nana.
\newblock {\em Geometry of the plane {C}remona maps}, volume 1769 of {\em
  Lecture Notes in Mathematics}.
\newblock Springer-Verlag, Berlin, 2002.

\bibitem{Avila:distortion}
Artur Avila.
\newblock Distortion in {$\mathrm{Diff}^\infty({\mathbb{S}}^1)$}.
\newblock {\em preprint, arXiv:0808.2334v1}, page 5 pages, 2008.

\bibitem{Benedetti-Petronio}
Riccardo Benedetti and Carlo Petronio.
\newblock {\em Lectures on hyperbolic geometry}.
\newblock Universitext. Springer-Verlag, Berlin, 1992.

\bibitem{Blanc-Cantat}
J{\'e}r{\'e}my Blanc and Serge Cantat.
\newblock Dynamical degrees of birational transformations of projective
  surfaces.
\newblock {\em J. Amer. Math. Soc.}, 29:415--471, 2016.

\bibitem{Blanc-Deserti:2015}
J\'er\'emy Blanc and Julie D\'eserti.
\newblock Degree growth of birational maps of the plane.
\newblock {\em Ann. Sc. Norm. Super. Pisa Cl. Sci. (5)}, 14(2):507--533, 2015.

\bibitem{BG}
Enrico Bombieri and Walter Gubler.
\newblock {\em Heights in {D}iophantine geometry}, volume~4 of {\em New
  Mathematical Monographs}.
\newblock Cambridge University Press, Cambridge, 2006.

\bibitem{Boucksom-Favre-Jonsson}
S\'ebastien Boucksom, Charles Favre, and Mattias Jonsson.
\newblock Degree growth of meromorphic surface maps.
\newblock {\em Duke Math. J.}, 141(3):519--538, 2008.

\bibitem{Burger-Iozzi-Monod}
Marc Burger, Alessandra Iozzi, and Nicolas Monod.
\newblock Equivariant embeddings of trees into hyperbolic spaces.
\newblock {\em Int. Math. Res. Not.}, 22:1331--1369, 2005.

\bibitem{Calegari-Freedman}
Danny Calegari and Michael~H. Freedman.
\newblock Distortion in transformation groups.
\newblock {\em Geom. Topol.}, 10:267--293, 2006.
\newblock With an appendix by Yves de Cornulier.

\bibitem{Cantat:Acta}
Serge Cantat.
\newblock Dynamique des automorphismes des surfaces {$K3$}.
\newblock {\em Acta Math.}, 187(1):1--57, 2001.

\bibitem{Cantat:Trans}
Serge Cantat.
\newblock Sur la dynamique du groupe d'automorphismes des surfaces {$K3$}.
\newblock {\em Transform. Groups}, 6(3):201--214, 2001.

\bibitem{Cantat:Annals}
Serge Cantat.
\newblock Sur les groupes de transformations birationnelles des surfaces.
\newblock {\em Annals Math.}, 174, 2011.

\bibitem{Cantat:SLC}
Serge Cantat.
\newblock The {C}remona groups.
\newblock {\em Proceedings of 2015 Summer Institute on Algebraic Geometry, AMS
  Proceedings of Symposia in Pure Mathematics}, pages 1--48, 2016.

\bibitem{Cantat-Dolgachev}
Serge Cantat and Igor Dolgachev.
\newblock Rational surfaces with a large group of automorphisms.
\newblock {\em J. Amer. Math. Soc.}, 25(3):863--905, 2012.

\bibitem{Cantat-Favre}
Serge Cantat and Charles Favre.
\newblock Sym\'etries birationnelles des surfaces feuillet\'ees.
\newblock {\em J. Reine Angew. Math.}, 561:199--235, 2003.

\bibitem{Diller-Favre:2001}
Jeffrey Diller and Charles Favre.
\newblock Dynamics of bimeromorphic maps of surfaces.
\newblock {\em Amer. J. Math.}, 123(6):1135--1169, 2001.

\bibitem{Dinh-Sibony:2005Annals}
Tien-Cuong Dinh and Nessim Sibony.
\newblock Une borne sup\'erieure pour l'entropie topologique d'une application
  rationnelle.
\newblock {\em Ann. of Math. (2)}, 161(3):1637--1644, 2005.

\bibitem{Dolgachev:1966}
Igor~V. Dolgachev.
\newblock Rational surfaces with a pencil of elliptic curves.
\newblock {\em Izv. Akad. Nauk SSSR Ser. Mat.}, 30:1073--1100, 1966.

\bibitem{Dolgachev:CAG}
Igor~V. Dolgachev.
\newblock {\em Classical algebraic geometry}.
\newblock Cambridge University Press, Cambridge, 2012.
\newblock A modern view.

\bibitem{Gizatullin:1980}
Marat~H. Gizatullin.
\newblock Rational {$G$}-surfaces.
\newblock {\em Izv. Akad. Nauk SSSR Ser. Mat.}, 44(1):110--144, 239, 1980.

\bibitem{Grivaux:2016}
Julien Grivaux.
\newblock Parabolic automorphisms of projective surfaces (after {M}. {H}.
  {G}izatullin).
\newblock {\em Mosc. Math. J.}, 16(2):275--298, 2016.

\bibitem{Gromov:AIIG}
M.~Gromov.
\newblock Asymptotic invariants of infinite groups.
\newblock In {\em Geometric group theory, {V}ol.\ 2 ({S}ussex, 1991)}, volume
  182 of {\em London Math. Soc. Lecture Note Ser.}, pages 1--295. Cambridge
  Univ. Press, Cambridge, 1993.

\bibitem{LeRoux-Mann}
Fr{\'e}d{\'e}ric Le~Roux and Kathryn Mann.
\newblock Strong distortion in transformation groups.
\newblock {\em Bull. Lond. Math. Soc.}, 50(1):46--62, 2018.

\bibitem{LMR2}
Alexander Lubotzky, Shahar Mozes, and M.~S. Raghunathan.
\newblock The word and {R}iemannian metrics on lattices of semisimple groups.
\newblock {\em Inst. Hautes \'Etudes Sci. Publ. Math.}, 91:5--53 (2001), 2000.

\bibitem{LMR1}
Alexander Lubotzky, Shahar Mozes, and MS~Raghunathan.
\newblock Cyclic subgroups of exponential growth and metrics on discrete
  groups.
\newblock {\em Comptes Rendus Acad\'emie des Sciences Paris S\'erie 1},
  317:735--735, 1993.

\bibitem{Manin:cubic-forms}
Yuri~I. Manin.
\newblock {\em Cubic forms}, volume~4 of {\em North-Holland Mathematical
  Library}.
\newblock North-Holland Publishing Co., Amsterdam, second edition, 1986.
\newblock Algebra, geometry, arithmetic, Translated from the Russian by M.
  Hazewinkel.

\bibitem{Militon:2013}
Emmanuel Militon.
\newblock {\'E}l\'ements de distorsion de {${\rm Diff}^\infty_0(M)$}.
\newblock {\em Bull. Soc. Math. France}, 141(1):35--46, 2013.

\bibitem{Militon:2014}
Emmanuel Militon.
\newblock Distortion elements for surface homeomorphisms.
\newblock {\em Geom. Topol.}, 18(1):521--614, 2014.

\bibitem{NguyenBD:2017}
Bac-Dang Nguyen.
\newblock Degrees of iterates of rational maps on normal projective varieties.
\newblock {\em Proc. Lond. Math. Soc. (3)}, 121(5):1268--1310, 2020.

\bibitem{Ratcliffe}
John~G. Ratcliffe.
\newblock {\em Foundations of hyperbolic manifolds}, volume 149 of {\em
  Graduate Texts in Mathematics}.
\newblock Springer, New York, second edition, 2006.

\bibitem{Robinson}
Derek J.~S. Robinson.
\newblock {\em Finiteness conditions and generalized soluble groups. {P}art 2}.
\newblock Springer-Verlag, New York-Berlin, 1972.
\newblock Ergebnisse der Mathematik und ihrer Grenzgebiete, Band 63.

\bibitem{TTTruong}
Tuyen Trung~Truong.
\newblock Relative dynamical degrees of correspondances over fields of
  arbitrary characteristic.
\newblock {\em J. Reine Angew. Math.}, 758:139--182, 2020.

\bibitem{Urech}
Christian Urech.
\newblock Remarks on the degree growth of birational transformations.
\newblock {\em Math. Research Lett.}, 25(1):291--308, 2018.

\bibitem{Weil}
Andr\'e Weil.
\newblock On algebraic groups of transformations.
\newblock {\em Amer. J. Math.}, 77:355--391, 1955.

\end{thebibliography}

\end{document}